\DeclareMathAlphabet{\mathfr}{U}{euf}{m}{n}
\newtheorem{theorem}{Theorem}[section]
\newtheorem*{theorem*}{Theorem}
\newtheorem{proposition}[theorem]{Proposition}
\newtheorem*{corollary*}{Corollary}
\newtheorem{corollary}[theorem]{Corollary}
\newtheorem{lemma}[theorem]{Lemma}
\newtheorem*{lemma*}{Lemma}
\theoremstyle{remark}
\newtheorem{remark}[theorem]{Remark}
\theoremstyle{definition}
\newtheorem{algorithm}[theorem]{Algorithm}
\newcommand{\Q}{\mathbb Q}
\newcommand{\Qbar}{{\overline{\mathbb Q}}}
\newcommand{\Zbar}{{\overline{\mathbb Z}}}
\newcommand{\Cbar}{{\overline{C}}}
\newcommand{\sigmabar}{{\overline{\sigma}}}
\newcommand{\alphabar}{{\overline{\alpha}}}
\newcommand{\Gal}{\mathrm{Gal}}
\newcommand{\Z}{\mathbb Z}
\newcommand{\F}{\mathbb F}
\newcommand{\Fbar}{\overline \F}
\newcommand{\AGL}{\mathrm{AGL}}
\newcommand{\GL}{\mathrm{GL}}
\newcommand{\GU}{\mathrm{GU}}
\newcommand{\End}{\operatorname{End}}
\newcommand{\Het}{H^1_\text{\'et}}
\newcommand{\Hcrys}{H^1_\text{crys}}
\newcommand{\HdR}{H^1_\text{dR}}
\newcommand{\Frob}{\operatorname{Frob}}
\newcommand{\im}{\operatorname{Im}}
\newcommand{\pr}{\operatorname{pr}}
\newcommand{\sems}{_{\operatorname{ss}}}
\newcommand{\Aut}{\operatorname{Aut}}
\newcommand{\omegab}{\boldsymbol{\omega}}
\newcommand{\p}{\mathfrak{p}}
\newcommand{\lambdabar}{{\overline \lambda}}
\newcommand{\pbar}{{\overline \p}}
\newcommand{\Jac}{\operatorname{Jac}}
\newcommand{\Tr}{\operatorname{Tr}}
\newcommand{\GSp}{\operatorname{GSp}}
\newcommand{\Sp}{\operatorname{Sp}}
\newcommand{\ST}{\mathrm{ST}}
\newcommand{\USp}{\mathrm{USp}}
\newcommand{\U}{\mathrm{U}}
\newcommand{\cO}{\mathcal{O}}
\newcommand{\cC}{\mathcal{C}}
\newcommand{\Nm}{\operatorname{Nm}}
\newcommand{\q}{\mathfrak{q}}
\newcommand{\fI}{\mathfrak{I}}
\newcommand{\fS}{\mathfrak{S}}
\newcommand{\rk}{\mathrm{rk}}
\newcommand{\fSst}{\mathfrak{S}^*}
\newcommand\numberthis{\addtocounter{equation}{1}\tag{\theequation}}
\numberwithin{equation}{section}
\begin{document}
\title[Computing $L$-polynomials of Picard curves]{Computing $L$-polynomials of Picard curves from Cartier--Manin matrices.\\
\tiny{With an appendix by A.V. Sutherland.}}
\author{Sualeh Asif}

\address{Massachusetts Institute of Technology,
77 Massachusetts Ave., Cambridge, MA 02139, United States}
\email{sualeh@mit.edu}
\urladdr{https://sualehasif.me/}

\author{Francesc Fit\'e}

\address{Department of Mathematics,
Massachusetts Institute of Technology,
77 Massachusetts Ave., Cambridge, MA 02139, United States}
\email{ffite@mit.edu}
\urladdr{https://www-math.mit.edu/~ffite/}

\author{Dylan Pentland}
\address{Massachusetts Institute of Technology,
77 Massachusetts Ave., Cambridge, MA 02139, United States}
\email{dylanp@mit.edu}

\date{\today}

\begin{abstract}
We study the sequence of zeta functions $Z(C_p,T)$ of a generic Picard curve $C:y^3=f(x)$ defined over $\Q$ at primes $p$ of good reduction for~$C$. We define a degree 9 polynomial $\psi_f\in \Q[x]$ such that the splitting field of $\psi_f(x^3/2)$ is the $2$-torsion field of the Jacobian of $C$. We prove that, for all but a density zero subset of primes, the zeta function $Z(C_p,T)$ is uniquely determined by the Cartier--Manin matrix $A_p$ of $C$ modulo $p$ and the splitting behavior modulo $p$ of $f$ and $\psi_f$; we also show that for primes $\equiv 1 \pmod{3}$ the matrix $A_p$ suffices and that for primes $\equiv 2 \pmod{3}$ the genericity assumption on $C$ is unnecessary. An element of the proof, which may be of independent interest, is the determination of the density of the set of primes of ordinary reduction for a generic Picard curve. By combining this with recent work of Sutherland, we obtain a practical deterministic algorithm that computes $Z(C_p,T)$ for almost all primes $p \le N$ using $N\log(N)^{3+o(1)}$ bit operations. This is the first practical result of this type for curves of genus greater than 2.
\end{abstract}
\maketitle

\section{Introduction}

Let $C$ be a Picard curve defined over $\Q$, that is, a curve admitting an affine model given by an equation of the form
\begin{equation}\label{equation: Picardcurve}
y^3=f(x)\,,
\end{equation}
where $f\in \Q[x]$ is a degree $4$ separable polynomial. Without loss of generality we will assume from now on that $C$ is given by an integral model in which $f(x)=x^ 4+f_2x^ 2+f_1x+f_0$ and the $f_i$ are integers. Let $\zeta_3$ denote a primitive cubic root of unity in an algebraic closure $\Qbar$ of $\Q$. The obvious action of $\langle \zeta_3\rangle$ on~$C$ induces a ring monomorphism from $\Z[\zeta_3]$ to the geometric endomorphism ring $\End(\Jac(C)_\Qbar)$ of the Jacobian of $C$. We will say that~$C$ is \emph{generic} if this ring monomorphism is an isomorphism. This amounts to asking that the geometric endomorphism algebra $\End(\Jac(C)_\Qbar)\otimes \Q$ is isomorphic to $\Q(\zeta_3)$.

For a prime $p$ of good reduction for $C$, let $C_p$ denote the reduction of $C$ modulo~$p$. The zeta function of $C_p$ is a formal power series
$$
Z(C_p,T):=\exp\left( \sum_{n\geq 1}\#C_p(\F_{p^n})\frac{T^n}{n}\right) \in\Q[[T]]\,,
$$ 
defined in terms of the number of points of $C_p$ over the finite extensions of the finite field of $p$ elements $\F_p$. It can be shown to be a rational function admitting an expression of the form
$$
Z(C_p,T)=\frac{L_p(C,T)}{(1-T)(1-pT)}\,,
$$ 
where $L_p(C,T)$ is a degree 6 polynomial with coefficients in $\Z$, which we call the $L$-polynomial of~$C$ at~$p$.

The purpose of the present paper is twofold. On the one hand, we contribute several theoretical results related to the $L$-polynomials of $C$. On the other hand, we apply these results to develop and implement an efficient and practical deterministic algorithm to compute the $L_p(C,T)$, for almost all $p$ up to some bound~$N$. The sequence of the $L_p(C,T)$ contains deep arithmetic information about $C$, and it has been the object of celebrated conjectures in number theory, such as generalized forms of the Sato--Tate conjecture (see for example \cite[Chap. 8]{Ser12}, \cite{KS09}) or of the Lang--Trotter conjecture. Convincing numerical evidence for these conjectures sometimes requires computing $L_p(C,T)$ for $p$ up to a bound $N$ within the range $[2^{30},2^{40}]$. In recent years, there have been several breakthroughs to make these computations feasible for curves of genus at most 2; the methods of previous articles, however, do not allow for computations for $N$ within this range for curves of genus~3. 

Before we return to this question in more detail let us start by describing the theoretical contributions on which the main algorithm of this paper relies.

\subsection*{Theoretical contributions}  Let $\cC_p$ denote the Cartier operator acting on the 3-dimensional $\F_p$-vector space $H^0(C_p,\Omega^1_{C_p/\F_p})$ of regular differentials of $C_p$. By the Cartier--Manin matrix~$A_p$ of $C$ at $p$, we will mean the matrix of the operator~$\cC_p$ acting on this space in a certain basis. 
By the work of Katz and Serre, the reduction of $L_p(C,T)$ \emph{modulo $p$} is uniquely determined by $A_p$ (see Section \ref{section: first facts} for a quick recollection of these facts and their references). 

In the first part of this article (corresponding to Sections \ref{section: preliminaries} and \ref{section: theoretical}), we show that in fact~$A_p$ carries enough information to uniquely determine $L_p(C,T)$ quite often.
In order to state more precisely our main results, let us consider separately the cases $p\equiv 1 \pmod 3$ and $p\equiv 2 \pmod 3$. To this aim, let $\fS(C)$ (resp. $\fI(C)$) denote the set of odd primes coprime to the discriminant of $f$ and congruent to $1$ (resp.~$2$) modulo $3$. In the first case, we obtain the following.
 
\begin{theorem}\label{theorem: main1}
Let $C$ be a Picard curve defined over $\Q$. Then:
\begin{enumerate}[i)]
\item For every $p\geq 53$ in $\fS(C)$ of ordinary reduction for $C$, the Cartier--Manin matrix of $C$ at $p$ uniquely determines the $L$-polynomial $L_p(C,T)$. 
\item If $C$ is generic, then every prime in $\fS(C)$ outside a density 0 set is ordinary for $C$.
\end{enumerate}
\end{theorem}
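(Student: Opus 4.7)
\emph{Proof proposal.}

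\noindent\textbf{Part (i).} The plan combines the Katz--Serre theorem (Section~\ref{section: first facts}), which recovers $L_p(C,T) \bmod p$ from $A_p$, with the structural constraints imposed by the $\Z[\zeta_3]$-action and ordinariness. By the functional equation, $L_p(C,T) = 1 + a_1 T + a_2 T^2 + a_3 T^3 + p a_2 T^4 + p^2 a_1 T^5 + p^3 T^6$, so the three integers $(a_1,a_2,a_3)$ determine $L_p$; these are known modulo~$p$ from $A_p$. Since $p \equiv 1 \pmod 3$, $\zeta_3$ acts on $C_p$ over $\F_p$ and commutes with Frobenius. By Tate's theorem on ordinary abelian varieties, $\End(\Jac(C_p))\otimes \Q$ is a product of CM fields of total degree~$6$ each containing $\Q(\zeta_3)$; expressing Frobenius inside this algebra yields a factorization
\[
L_p(C,T) \;=\; \prod_{i=1}^{3}\bigl(1 - t_i T + p T^2\bigr),
\]
where the $t_i$ are Galois-conjugate real algebraic integers in a totally real étale $\Q$-algebra $E^+$ of degree $3$ (the maximal totally real subalgebra of the CM algebra), with $|t_i| \le 2\sqrt p$; ordinariness then forces each $t_i \not\equiv 0 \pmod p$.

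\smallskip

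\noindent\textbf{Lifting and the bound $p \ge 53$.} Factoring the cubic $L_p(C,T) \bmod p = \prod_i (1 - t_i T) \bmod p$ over $\F_p$ recovers the reductions of the $t_i$ at the primes of $E^+$ above $p$. The lift of each $t_i$ to $\cO_{E^+}$ is pinned down by the Weil bound via a Minkowski-type injectivity argument: for $p$ sufficiently large, any coset of $p\cO_{E^+}$ meets the Weil box $\{t \in \cO_{E^+} : |t|_\sigma \le 2\sqrt p \text{ for all } \sigma\}$ in at most one point. Carrying out this analysis in each possible shape of $E^+$ (namely $\Q^3$, $\Q \times K^+$ with $K^+$ real quadratic, or a real cubic field $K'$), and accounting for boundary effects such as two $t_i$ colliding modulo $p$ or the Weil bound being attained, yields an explicit threshold on $p$ beyond which the lift is unique. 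Taking the maximum across the cases produces the bound $p \ge 53$; once the multiset $\{t_1,t_2,t_3\}$ is recovered, $L_p(C,T)$ is determined.

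\smallskip

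\noindent\textbf{Part (ii) and main obstacle.} For part~(ii), the plan is to show that, under the genericity hypothesis, the set of non-ordinary primes inside $\fS(C)$ has density zero. Non-ordinariness amounts to $\det A_p \equiv 0 \pmod p$, which translates into the condition that the Frobenius conjugacy class in the image of some mod-$\ell$ Galois representation (for suitable auxiliary $\ell$) lies in a proper Zariski-closed subvariety of that image; the genericity assumption $\End(\Jac(C)_{\Qbar})\otimes \Q = \Q(\zeta_3)$ guarantees the Galois image is as large as the $\Q(\zeta_3)$-commutativity constraint allows (a Serre-type open-image statement for $\Q(\zeta_3)$-abelian varieties), and Chebotarev applied to the corresponding finite extension yields the density-zero conclusion. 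The main obstacle in (i) is organizing the CM case analysis on $\Jac(C_p)$ so that one extracts a single uniform numerical bound (this is where the value $53$ emerges rather than the naïve $p > 16$); in (ii), the crux is establishing largeness of the Galois image unconditionally, without appealing to an unproven generalized Sato--Tate statement, and I expect this is the most delicate ingredient of the whole theorem.
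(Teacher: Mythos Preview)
Your approach to part~(i) diverges from the paper's and contains a real gap. The paper does \emph{not} use the totally real factorization $L_p=\prod_i(1-t_iT+pT^2)$; it instead exploits the $\Z[\zeta_3]$-eigenspace decomposition of $H^0(C_p,\Omega^1)$, which makes the Cartier--Manin matrix block-diagonal with a $2\times 2$ block $A_p^\sigma$ and a $1\times 1$ block $A_p^{\bar\sigma}$. This gives $g_p^\sigma$ and $g_p^{\bar\sigma}$ \emph{separately}, which is strictly more information than $L_p\bmod p$. Writing $L_p^\sigma=1-a_\p T+b_\p T^2-c_\p T^3\in\Z[\zeta_3][T]$, the paper recovers $a_\p=x+\zeta_3 y$ modulo $p$ from the pair $(r_\p,r_{\bar\p})$ via a $2\times 2$ linear system over $\F_p$, lifts $(x,y)\in\Z^2$ uniquely using $\max\{x^2,y^2\}\le\tfrac{4}{3}|a_\p|^2\le 12p$ (whence $p>48$, i.e.\ $p\ge 53$), then pins down the sixth root of unity in $c_\p=\zeta\bar\pi p$ via ordinariness, and finally $b_\p$ from $pb_\p=c_\p a_{\bar\p}$. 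Your lifting, by contrast, is circular: you propose to lift the $t_i$ inside $\cO_{E^+}$, but $E^+=\Q[x]/\prod(x-t_i)$ is exactly what you are trying to determine, so the target ring is unknown; the ``case analysis on the shape of $E^+$'' cannot be finite, since there are infinitely many real cubic fields. What you would actually need is that at most one monic integer cubic with all roots in $[-2\sqrt p,2\sqrt p]$ reduces to a given $\bar g\in\F_p[x]$, and the naive coefficient bounds ($|e_2|\le 12p$, $|e_3|\le 8p^{3/2}$) do not give this. The appeal to Tate is also unnecessary: the factorization $L_p=\prod(1-t_iT+pT^2)$ holds for any Weil polynomial by the functional equation alone.

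For part~(ii) you have the right outline but skip the essential point. The condition ``$p$ divides the relevant coefficient'' varies with $p$, so it is not cut out in any single mod-$\ell$ or $\ell$-adic image, and a direct Chebotarev argument does not apply. The paper uses Sawin's method: one shows that if $p\mid b_{\p,\lambda}$ then $b_{\p,\lambda}/p$ lies in a \emph{finite} subset $T\subset\cO$ (by the Weil bound and finiteness of elements of bounded norm in an imaginary quadratic ring), and for each $t\in T$ the locus where $\Tr(\wedge^2 W_\lambda\otimes\chi^{-1})=t$ is a proper Zariski-closed subset of the $\ell$-adic monodromy group $G$. Genericity makes $G$ connected, so each such locus has Haar measure zero, and equidistribution of Frobenius in $G$ finishes the argument.
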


See Corollary~\ref{corollary: CartierManin} and Corollary \ref{corollary: genericordinary}. Let $g_p(C,T)$ denote the reversed $T^3 \chi_p(1/T)$ of the characteristic polynomial $\chi_p(T)$ of $\cC_p$ acting on $H^0(C_p,\Omega^1_{C_p/\F_p})$. To prove the theorem, we first show that the existence of a functorial map from the crystalline cohomology space $\Hcrys(C_p/\Z_p)$ onto the semisimple subspace $H^0(C_p,\Omega^1_{C_p/\F_p})\sems$ implies that the action of $\Z[\zeta_3]$ on these two spaces induces compatible factorizations of the polynomials $L_p(C,T)$ and $g_p(C,T)$, over $\Z[\zeta_3][T]$ and $\F_p[T]$, respectively. This is the content of Section \ref{section: superelliptic}, which is written for general superelliptic curves of prime exponent (with no cost of extra conceptual or technical complication with respect to the case of Picard curves). 

The first assertion of the theorem is proven in Section \ref{section: splitcase}. It relies on the compatible factorizations of $L_p(C,T)$ and $g_p(C,T)$, and uses the Weil bounds and the fact that if $p$ is ordinary, then $g_p(C,T)$ has degree~$3$. 

While the proof of the first assertion is $p$-adic in nature, the proof of the second statement (accomplished in Section \ref{section: densityquestions}) relies on $\ell$-adic methods. It uses the description of $L_p(C,T)$ in terms of the \'etale cohomology group $\Het(C_\Qbar,\Z_\ell)$ (or alternatively, in terms of the Tate module $T_\ell(\Jac(C))$). It should be regarded as a refinement of \cite[Thm.~1]{Fit20} obtained by replacing Ogus' method by that of Sawin (see \cite{Saw16}). We actually need a mild generalization of Sawin's result, which is presented in Section~\ref{section: Sawin}.

We now turn to primes $p\equiv 2 \pmod 3$. Attached to the Picard curve $C$, define the polynomial
\begin{align*}
\psi_f(x) := x^9 &+ 24f_2x^7 - 168f_1x^6 + (1080f_0 - 78f_2^2)x^5 + 336f_1f_2x^4\\
 &+ (1728f_0f_2 - 636f_1^2 + 80f_2^3)x^3 + (-864f_0f_1 - 168f_1f_2^2)x^2\\\numberthis \label{equation: defpsif}
 &+ (-432f_0^2 + 216f_0f_2^2 - 120f_1^2f_2 - 27f_2^4)x - 8f_1^3\,.
\end{align*}
The splitting field of $\psi_f(x^3/2)$ is the $2$-torsion field of the Jacobian of $C$. This is explained in the Lemma of the Appendix to this article, which was kindly written for us by Andrew Sutherland. We then have the following result.

\begin{theorem}\label{theorem: main2}
Let $C$ be a Picard curve defined over $\Q$, and let $f,\psi_f\in \Z[x]$ be as in \eqref{equation: Picardcurve} and \eqref{equation: defpsif}, respectively. For every prime in $\fI(C)$, the data:
\begin{enumerate}[i)]
\item the Cartier--Manin matrix of $C$ at $p$,
\item the knowledge of $\psi_f$ having a root or not modulo $p$, and 
\item the knowledge of $f$ being irreducible or not modulo $p$
\end{enumerate}
uniquely determine the $L$-polynomial $L_p(C,T)$.
\end{theorem}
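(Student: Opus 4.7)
The plan is to exploit the $\Z[\zeta_3]$-action on the cohomology of $\Jac(C)$ together with the inertness of $p$ in $\Z[\zeta_3]$ (which holds as $p\equiv 2\pmod 3$) to reduce $L_p(C,T)$ to a one-parameter family, and then to recover the remaining unknown by Chinese remainder.

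First I would apply the compatible factorization from Section~\ref{section: superelliptic}: for $p$ inert in $\Z[\zeta_3]$, Frobenius interchanges the $\zeta_3$- and $\zeta_3^2$-eigenspaces of the CM action on $\Het(\Jac(C)_{\Qbar},\Q_\ell)$. Combined with the functional equation, this forces $L_p(C,T)=1+a_1T^2+pa_1T^4+p^3T^6$ for some $a_1\in\Z$, and direct substitution shows that $-p$ is a root of $P(x):=x^3+a_1x^2+pa_1x+p^3$, yielding
\[
L_p(C,T)=(1+pT^2)\bigl(1+bT^2+p^2T^4\bigr),\quad b:=a_1-p,\quad|b|\le 2p.
\]
The Cartier--Manin matrix $A_p$ determines $L_p(C,T)\bmod p\equiv 1+a_1T^2$ by Section~\ref{section: first facts}, and hence determines $b\bmod p$.

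To recover $b\bmod 3$ from (iii), I would observe that the roots $x_1,\ldots,x_4$ of $f$ define classes $D_i:=[P_i-\infty]\in\Jac(C)[3]$, with $P_i=(x_i,0)$: these satisfy $3D_i=0$ (from $\mathrm{div}(x-x_i)=3P_i-3\infty$) and $\sum_iD_i=0$ (from $\mathrm{div}(y)=P_1+\cdots+P_4-4\infty$), and span a $3$-dimensional $\F_3$-subspace $V\subseteq\Jac(C)[3]$ on which $\Frob_p$ acts as the permutation induced by its action on $\{x_i\}$. Comparing, over $\F_3$, the five possible cycle-type characteristic polynomials on $V$ with the factorization of $L_p(C,T)\bmod 3$ (which depends only on $a_1\bmod 3$), one finds that $a_1\not\equiv 2\pmod 3$ in all cases, and that $f$ is irreducible modulo $p$ iff $a_1\equiv 1\pmod 3$; otherwise $a_1\equiv 0\pmod 3$. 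In either case $b\bmod 3$ is thereby determined.

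For (ii), I would invoke the Appendix's identification of the splitting field of $\psi_f(x^3/2)$ with the $2$-torsion field $L$ of $\Jac(C)$. The reduction $L_p(C,T)\bmod 2$ takes only two values, $1+T^6$ or $(1+T)^6$, according to the parity of $a_1$. The conjugacy class of $\Frob_p$ in $\Gal(L/\Q)$---which lies in the nontrivial coset of the projection $\Gal(L/\Q)\twoheadrightarrow\Gal(\Q(\zeta_3)/\Q)$, since $p\equiv 2\pmod 3$---determines both this parity and whether $\psi_f$ has a root modulo $p$. Using the explicit correspondence between roots of $\psi_f$ and $2$-torsion data from the Appendix, one matches the two possible parities of $a_1$ with the two possible answers to the yes/no question about $\psi_f$. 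Combining $b\bmod p$, $b\bmod 2$, and $b\bmod 3$ via CRT gives $b\bmod 6p$, and since $|b|\le 2p<6p$ this determines $b$ (hence $L_p(C,T)$) uniquely. The main obstacle is this final matching step: identifying which parity of $a_1$ corresponds to the existence of a root of $\psi_f$ modulo $p$ requires a careful tracking of the Galois action on the $2$-torsion through the Appendix, and must be checked uniformly for all Picard curves, not only the generic ones.
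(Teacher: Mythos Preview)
Your proof is correct and follows essentially the same strategy as the paper: reduce $L_p(C,T)$ to the single integer $t_p$ (your $-b$) with $|t_p|\le 2p$, and determine it modulo $6p$ by combining the Cartier--Manin matrix (mod $p$), the irreducibility of $f$ via the $3$-torsion (mod $3$), and the Appendix's $2$-torsion analysis via $\psi_f$ (mod $2$). The only minor differences are in execution: you derive the shape $(1+pT^2)(1+bT^2+p^2T^4)$ by the eigenspace-swapping argument rather than by the point count of Lemma~\ref{lemma: pmod3eq2}, and your mod-$3$ step uses just the $3$-dimensional subspace $V\subset \Jac(C)[3]$ together with divisibility of $L_p\bmod 3$, whereas the paper gives either a direct point count (Lemma~\ref{lemma: fmodp}) or the full description of $\Jac(C)[3]$ (Proposition~\ref{proposition: Lpolymod3}).
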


Its proof is the content of Section \ref{section: inertcase} and the Appendix. It is based on the following simple idea. For all $p\in \fI(C)$, the $L$-polynomial $L_p(C,T)$ is uniquely determined by the coefficient of $T^2$. By writing this coefficient as $p-t_p$, one has that $|t_p|\leq 2p$. Since condition $i)$ determines $t_p$ modulo $p$, the theorem follows from the facts that~$ii)$ determines $t_p$ modulo $2$ (see the Theorem in the Appendix), and that $iii)$ determines it modulo $3$ (see Lemma~\ref{lemma: fmodp} and Proposition~\ref{proposition: Lpolymod3}). The latter should be no surprise, as it is well known that the splitting field of $f$ is closely related to the $3$-torsion field of the Jacobian of $C$.

We highlight the \emph{constructive} nature of the proofs of Theorems \ref{theorem: main1} and \ref{theorem: main2}. By this, we mean that they provide a way to compute $L_p(C,T)$ from the given data. This is exploited in the second part of the paper.

\subsection*{A practical algorithm} In the second part of the paper (corresponding to Section~\ref{section: algorithmic}), we are concerned with the problem of computing the $L$-polynomials $L_p(C,T)$, for $p\leq N$. 

At a \emph{theoretical} level, this problem is well understood: for a fixed smooth and projective curve of genus $g$ and defined over $\Q$, Pila's algorithm \cite{Pil90} (extending \cite{Sch85}) computes the zeta function at a prime $p$ of good reduction using $\log(p)^{g^{O(1)}}$ operations. 

As mentioned above, there are situations in which one is interested in computing the zeta functions at all primes $p\leq N$. In these situations, one can do better than applying Pila's algorithm prime by prime. In fact, Harvey has proposed an algorithm that achieves this computation using a total of $N\log(N)^{3+o(1)}$ bit operations (see \cite{Har14} for the case of hyperelliptic curves and \cite{Har15} for the case of a general arithmetic scheme, including, of course, the case of smooth projective curves). 

The existence of Pila's and Harvey's theoretical algorithms sets the challenge to develop \emph{practical} versions of them, amenable for implementation and producing effective results when run by real hardware and $N$ is in the range, say, $[2^{30},2^{40}]$. 

Let us summarize part of the progress which has been made toward the obtaining of practical versions of Harvey's algorithm. In the hyperelliptic curve case, practical algorithms to compute the Cartier--Manin matrix $A_p$ have been developed and implemented by Harvey and Sutherland (see \cite{HS14} and \cite{HS16}). In \cite{HMS16}, such practical algorithms were developed for genus $g=3$ geometrically hyperelliptic curves, that is, curves admitting an affine model given by the equations 
$$
h(x,y)=0\,, \qquad w^2=f(x,y)\,, 
$$ 
where $f,h\in \Z[x,y]$ are polynomials of respective degrees 4 and 2. In genus $g\le 2$, computing $A_p$ suffices to compute the $L$-polynomial, but this is no longer true for $g\geq 3$. This makes adapting Harvey's algorithm more difficult.

In another direction, the case of cyclic covers of the projective line has been examined in \cite{ABCMT19}. The authors provide an algorithm to compute the $L$-polynomial at $p$ using $p^{1/2+o(1)}$ bit operations in the case of a superelliptic curve, and this yields the fastest practical algorithm in the literature for computing the $L$-polynomials at all primes $p\leq N$. In \cite{Abe18}, a Las Vegas type algorithm with expected complexity $\log(p)^{14+o(1)}$ is provided to compute the $L$-polynomial at~$p$ of a genus three hyperelliptic curve. However, for computing the $L$-polynomials at all $p\leq N$, when $N$ is in the range we consider, \cite{ABCMT19} still exhibits better performance. 

For a Picard curve $C$ defined over $\Q$, our case of interest, \cite{BTW05} provides an algorithm of complexity $O(p^{1/2})$ to compute $L_p(C,T)$. The main computational contribution of this article is a practical deterministic algorithm for the computation of the $L$-polynomials of a generic Picard curve at \emph{almost} all primes $p\leq N$, performing $N\log(N)^{3+o(1)}$ bit operations. This is obtained by combining the recent work of Sutherland \cite{Sut20} with the constructive proofs of Theorems~\ref{theorem: main1} and~\ref{theorem: main2}.

Before stating the result more precisely, let us establish some conventions and notation. From now on, the term \emph{algorithm}, without further qualification, is used to refer to a deterministic algorithm. For a generic Picard curve $C$ defined over $\Q$, let $\fSst(C)$ denote the subset of ordinary primes of $\fS(C)$. For $N\geq 1$, set
$$
\fSst_N(C):=\fSst(C)\cap [1,N]\,,\qquad \fI_N(C):=\fI(C)\cap [1,N]\,.
$$

\begin{theorem} Let $C$ be a generic Picard curve.
 Algorithm \ref{algorithm: full} determines $\fSst_N(C)$ and returns $L_p(C,T)$ for every prime $p\in \fSst_N(C)\cup \fI_N(C)$ using $N\log(N)^{3+o(1)}$ bit operations. 
\end{theorem}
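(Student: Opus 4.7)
The plan is to build Algorithm \ref{algorithm: full} in three layers, assembling the constructive content of Theorems \ref{theorem: main1} and \ref{theorem: main2} on top of Sutherland's recent batch algorithm \cite{Sut20} for Cartier--Manin matrices.

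First, I would invoke Sutherland's algorithm once to compute the Cartier--Manin matrices $A_p$ for every prime $p\leq N$ of good reduction for $C$. By \cite{Sut20}, this single call costs $N\log(N)^{3+o(1)}$ bit operations, and it will dominate the total running time; every per-prime step that follows must therefore be kept to $\log(N)^{O(1)}$ bit operations so that the contribution summed over all primes $p\leq N$ stays within the overall budget $N\log(N)^{3+o(1)}$.

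Second, I would partition the primes by their residue modulo $3$. For $p\equiv 1 \pmod 3$ lying in $\fS(C)$, I would identify the ordinary primes -- thereby assembling $\fSst_N(C)$ -- by testing whether $A_p$ has rank $3$ over $\F_p$. For each such $p\geq 53$, I would recover $L_p(C,T)$ via the constructive procedure underlying Theorem \ref{theorem: main1}: factor the degree $3$ polynomial $g_p(C,T)$ over $\F_p[T]$, transport this factorization through the compatible $\Z[\zeta_3]$-action to a factorization of $L_p(C,T) \bmod p$, and lift the coefficients uniquely using the Weil bounds. For $p\in \fI_N(C)$, I would reduce $f$ and $\psi_f$ modulo $p$, test irreducibility of $f\bmod p$, and test whether $\psi_f \bmod p$ has a root; writing the middle coefficient of $L_p(C,T)$ as $p-t_p$, the proof of Theorem \ref{theorem: main2} supplies $t_p$ modulo $p$ from $A_p$, modulo $2$ from $\psi_f$, and modulo $3$ from $f$, so CRT together with the Weil bound $|t_p|\leq 2p$ pins down $t_p$ uniquely.

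The main obstacle -- and the reason the theorem is not an immediate corollary of Theorems \ref{theorem: main1} and \ref{theorem: main2} -- is keeping the per-prime overhead down to $\log(N)^{O(1)}$ deterministically. Factoring the fixed-degree polynomials $g_p(C,T)$, $f$, and $\psi_f$ modulo $p$ can all be handled via fast modular exponentiation combined with $\gcd$ computations against $x^p - x$ in $\F_p[x]$, each in $\log(p)^{O(1)}$ bit operations; the remaining linear algebra with $A_p$ over $\F_p$ and the CRT reconstruction are negligible at this precision. Once this bookkeeping is verified, correctness follows directly from Theorems \ref{theorem: main1} and \ref{theorem: main2}, and the complexity bound follows because Sutherland's batch step absorbs all remaining work.
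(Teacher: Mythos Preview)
Your overall architecture matches the paper's: batch-compute the Cartier--Manin matrices via \cite{Sut20}, then do per-prime lifting using the constructive content of Theorems~\ref{theorem: main1} and~\ref{theorem: main2}. Your treatment of the inert case is correct and coincides with the paper's. The split case, however, has a genuine gap.

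The constructive procedure behind Theorem~\ref{theorem: main1} (Corollary~\ref{corollary: CartierManin}) does not merely require the factorization $g_p=g_p^\sigma\cdot g_p^{\sigmabar}$ --- that is already handed to you by the block structure~\eqref{equation: CartierManinSplit} of $A_p$, no factoring needed. What it requires is an explicit primitive cube root of unity $\sigma(\zeta_3)\in\F_p^\times$: it enters the linear system~\eqref{equation: linear system} that recovers $a_\p$, it is needed to compute a generator $\pi\in\cO$ of the prime above $p$ (step c)3) in Algorithm~\ref{algorithm: full}), and it enters equation~\eqref{equation: determinezeta} that pins down the unit~$\zeta$ in $c_\p$. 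Your claim that this falls under ``factoring via $\gcd$ with $x^p-x$'' is incorrect: since both roots of $x^2+x+1$ lie in $\F_p$, the gcd simply returns $x^2+x+1$, and separating the two roots is an instance of equal-degree factorization --- equivalently, extracting $\sqrt{-3}\bmod p$ --- which is \emph{not} known deterministically in $\log(p)^{O(1)}$ time for a single prime.

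The paper handles this with a separate batch step, \textsc{FindCubicRoots} (Remark~\ref{remark: findcubicroot}): fix an elliptic curve $E$ with CM discriminant $-12$, compute its Frobenius traces $t_p$ at all $p\le N$ via the average-polynomial-time algorithms of \cite{HS14,HS16,Sut20}, and read off $\sqrt{-3}\equiv t_p/(2u_p)\pmod p$ from the norm equation $4p=t_p^2+12u_p^2$. This costs $N\log(N)^{3+o(1)}$ in total and is a genuine additional ingredient that your proposal omits; without it, the algorithm as you describe it is not deterministic within the stated complexity.
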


One might speculate that the time spent with the computation of the $L_p(C,T)$ for primes $p\leq N$ in the complement of the set $\fSst_N(C) \cup \fI_N(C)$ using existing algorithms (such as \cite{ABCMT19} or lifting methods; see Remark \ref{remark: especulation1}) would be subsumed by the bound $N\log(N)^{3+o(1)}$. Despite showing that this complement is of 0 density, we were not able to prove that it is thin enough to retrieve such a conclusion.
Algorithm~\ref{algorithm: full} 
is described in Section \ref{section: algdesc}, where we also analyze its correctness and running time. In Section \ref{section: implementation}, we discuss our implementation of the algorithm and its speed compared to the implementation in \cite{Sut20}. In Section \ref{section: implementation}, we also explain how the methods of this article can be combined with the algorithm of \cite{ABCMT19} to provide a constant factor  improvement in the performance of the latter.

\section{Preliminaries}\label{section: preliminaries}

Fix an algebraic closure $\Qbar$ of $\Q$ and let $\overline \Z$ denote its ring of algebraic integers. For a rational prime $p$, let $\F_p$ denote the finite field with $p$ elements, and let $v_p$ denote a prime ideal of $\overline \Z$ lying above $p$ (by abuse of notation, we will also denote by~$v_p$ the corresponding extension of the $p$-adic valuation). The residue field of $v_p$ is an algebraic closure of $\F_p$, which we will denote by $\overline \F_p$. 

Let $D_{v_p}$ denote the decomposition group of $v_p$. We will denote by $\varphi_p$ the (arithmetic) Frobenius element of $G_{\F_p}:=\Gal(\overline \F_p/\F_p)$ and by $\Frob_p$ a preimage in $D_{v_p}$ of~$\varphi_p$ by the canonical projection from $D_{v_p}$ to $G_{\F_p}$.

\subsection{\texorpdfstring{$L$-polynomials modulo $p$}{L-polynomials modulo p}}\label{section: first facts} Throughout this section, $C$ denotes a smooth and projective curve of genus~$g$ defined over $\Q$. Let $S$ denote a finite set of primes such that $C$ has good reduction outside $S$. By this, we mean that there exists a smooth and projective scheme $\mathcal C \rightarrow \mathrm{Spec}(\Z)- S$ whose generic fiber is $C$. We will denote by $C_{p}$ the special fiber of $\mathcal C$ at~$p$, and will refer to it as the reduction of $C$ modulo~$p$. We will denote by $\Jac(C)$ the Jacobian of $C$. 

\subsection*{Reduction from \'etale cohomology} Let $\ell$ denote a rational prime. Suppose from now on that $p\not =\ell$ is a prime of good reduction for $C$. Let $V_\ell(\Jac(C))$ denote the rational $\ell$-adic Tate module of $\Jac(C)$. We define the $L$-polynomial of $C$ at~$p$ as
\begin{equation}\label{equation: Lpolynomial}
L_p(C,T):=\det(1-\Frob_pT\, |\,V_\ell(\Jac(C)))=\det(1-\Frob_p^{-1}T\, |\,\Het(\Cbar,\Z_\ell))\,.
\end{equation}
Here $\Cbar$ stands for the base change of $C$ from $\Q$ to~$\Qbar$. It is a degree $2g$ polynomial with integer coefficients, and it does not depend on the choice of $\ell$. Let us write 
$$
g_p(C,T):=\det(1-\varphi_p^{-1}T\,|\,\Het(\Cbar_{p},\Z/p\Z))\,,
$$
where $\Cbar_{p}$ is the base change of $C_{p}$ from $\F_p$ to~$\Fbar_p$. 
By \cite[Thm. 3.1]{Kat73}, we have the congruence
\begin{equation}\label{equation: expose22}
L_p(C,T)\equiv g_p(C,T) \quad \pmod p\,.
\end{equation}

We will give an alternative description of $g_p(C,T)$ in terms of the Cartier operator. Let $F_p:C_p\rightarrow C_p$ be the absolute Frobenius, the map which is the identity on the underlying topological space of $C_p$ and which acts by $F_p(\nu)=\nu^p$ on sections $\nu$ of $\cO_{C_p}$. By abuse of notation, let us also denote by $F_p$ the map
$$
 H^1(C_p,\cO_{C_p})\rightarrow H^1(C_p,\cO_{C_p})
$$
induced in cohomology. Let $\cC_p\colon H^0(C_p,\Omega^1_{C_p/\F_p})\rightarrow H^0(C_p,\Omega^1_{C_p/\F_p})$ denote the Cartier operator as defined in \cite[\S10]{Ser58} (see also \cite[\S2]{Sut20} or \cite[\S2]{AH19} for a concise treatment). 
The $\F_p$-linear operators $F_p$ and $\cC_p$ give rise to $\Fbar_p$-linear operators 
$$
\begin{array}{l}
F_p\otimes \Fbar_p\colon H^1(\Cbar_p,\cO_{\Cbar_p})\rightarrow H^1(\Cbar_p,\cO_{\Cbar_p})\,,\\[6pt]
\cC_p\otimes \Fbar_p\colon H^0(\Cbar_p,\Omega^1_{\Cbar_p/\Fbar_p})\rightarrow H^0(\Cbar_p,\Omega^1_{\Cbar_p/\Fbar_p})\,.
\end{array}
$$
We denote with a subscript $\sems$ the semisimple part of the above spaces with respect to the respective operators. Fundamental to our discussion will be the existence of isomorphisms
\begin{equation}\label{equation: comparisonisomorphisms}
\Het(\Cbar_p,\Z/p\Z)\otimes_{\F_p} \Fbar_p \simeq H^1(\Cbar_p,\cO_{\Cbar_p})\sems \simeq H^0(\Cbar_p,\Omega^1_{\Cbar_p/\Fbar_p})\sems \,.
\end{equation}
For the first isomorphism we refer to \cite[Prop. 2.2.5]{Kat73}, where it is moreover shown that it transforms the automorphism $\varphi_p^{-1}\otimes \Fbar_p$ into the automorphism $F_p\otimes \Fbar_p$. The second isomorphism is Serre duality, and by \cite[Prop. 9]{Ser58} it sends $F_p\otimes \Fbar_p$ to $\cC_p\otimes \Fbar_p$. We deduce that
$$
g_p(C,T)=\det(1-\cC_pT\,|\,H^0(C_{p},\Omega^1_{C_p/\F_p})\sems)=\det(1-\cC_pT\,|\,H^0(C_{p},\Omega^1_{C_p/\F_p}))\,.
$$ 

\subsection*{Reduction from crystalline cohomology} Crystalline cohomology provides a functorial version of \eqref{equation: expose22}, which we now recall. Let $W$ denote the ring of Witt vectors of $\F_p$, and let $\Hcrys(C_p/W)$ denote the crystalline cohomology of $C_p$. The latter is a free module over $W\simeq \Z_p$ of rank $2g$. By abuse of notation, we also denote by 
$$
F_p\colon \Hcrys(C_p/W)\rightarrow \Hcrys(C_p/W)
$$
the map induced by the absolute Frobenius. By \cite{KM74}, we have the equality
\begin{equation}\label{equation: cryspoly}
L_p(C,T)=\det(1-F_pT\,|\,\Hcrys(C_p/W))\,.
\end{equation}
The cohomology group $\Hcrys(C_p/W)$ comes equipped with a functorial map to de Rham cohomology
\begin{equation}\label{equation: functorialprojaux}
\Hcrys(C_p/W)\rightarrow \HdR(C_p/\F_p)\rightarrow \HdR(C_p/\F_p)\sems
\end{equation}
preserving the action of the absolute Frobenius on the respective spaces. By \cite[(3.3.4)]{Kat73} and Serre duality, respectively, there are functorial isomorphisms
$$
\HdR(C_p/\F_p)\sems\simeq H^1(C_p,\cO_{C_p})\sems\simeq H^0(C_p,\Omega^1_{C_p/\F_p})\sems
$$
mapping $F_p$ to $\cC_p$. Together with \eqref{equation: functorialprojaux}, this provides a functorial map
\begin{equation}\label{equation: functorialproj}
\Hcrys(C_p/W)\rightarrow H^0(C_p,\Omega^1_{C_p/\F_p})\sems
\end{equation}
mapping the absolute Frobenius $F_p$ to the Cartier operator $\cC_p$, which refines \eqref{equation: expose22} and which will be exploited in Section \ref{section: superelliptic}.

Note that further subtleties arise when working over a nonprime field, but we will not encounter them in our discussion.

\subsection{Superelliptic curves and Cartier--Manin matrices}\label{section: superelliptic}

From now on, let $C$ be a superelliptic curve defined over $\Q$, that is, a curve admitting an affine model given by the equation
$$
y^m=f(x)\,,
$$
where $m\geq 2$ is an integer and $f\in \Z[x]$ is a separable polynomial of degree $d\geq 3$. Let $\cO$ denote $\Z[\zeta_m]$, where $\zeta_m$ denotes a fixed primitive $m$th root of unity in~$\overline \Z$. We will further assume that $m$ is prime. Then (see \cite[p. 149]{Sch98}) the action of $\langle \zeta_m\rangle$ on $C$ induces a ring monomorphism
\begin{equation}\label{equation: ring homomorphism}
\cO\hookrightarrow \End(\Jac(C)_{\Q(\zeta_m)})\,.
\end{equation}

The genus of $C$ is then expressed by the formula
$$
g=\frac{(d-1)(m-1)-\gcd(m,d)+1}{2}\,.
$$

Let $\fS(C)$ denote the set\footnote{Note that the set $\fS(C)$ in fact depends on the model chosen for $C$.} of rational  primes coprime to the discriminant of~$f$ and to its leading coefficient, and congruent to $1$ modulo $m$. Assume until the end of this section that~$p$ belongs to $\fS(C)$. In particular, $p$ is a prime of good reduction for $C$ which splits completely in $\Q(\zeta_m)$. The choice of a prime~$v_p$ of $\Zbar$ lying over~$p$ singles out one of the $\varphi(m)$ primes of $\cO$ lying over~$p$, where $\varphi$ denotes Euler's totient function. Call it $\p_0$. For a prime~$\p$ of~$\cO$ lying over $p$, define the $\F_p$-algebra map 
\begin{equation}\label{equation: sigmamap}
\sigma_\p\colon \cO\otimes_\Z\F_p\simeq \bigoplus_{\p'\mid \p}\F_{\p'}\xrightarrow{\pr_{\p}} \F_p\,,
\end{equation}
where $\pr_{\p}$ denotes the projection from the $\F_\p$-component. To shorten the notation, we will often simply write $\sigma$ to denote $\sigma_\p$, and $\sigma_0$ to denote $\sigma_{\p_0}$. Note that there exists an integer $1\leq j(\sigma)\leq m$ such that the equality
\begin{equation}\label{equation: jsigma}
\sigma(\zeta_m)=\sigma_{0}(\zeta_m)^{j(\sigma)}
\end{equation}
holds in $\F_p$. Fix a $W$-algebra map $\cO\otimes_\Z W\rightarrow W$ rendering the diagram
\begin{equation}\label{equation: sigmalift}
\xymatrix{
\cO\otimes_{\Z} W\ar[d]^{\otimes_W \F_p}\ar[r] & W \ar[d]^{\otimes_W \F_p}\\
\cO \otimes_{\Z}\F_p\ar[r]^{\sigma} & \F_p
}
\end{equation}
commutative. Let us still denote by $\sigma$ the map $\cO\otimes_\Z W\rightarrow W$.

We will use $\sigma_0$ to define an action of the cyclic group $\langle \zeta_m\rangle$ on $C_p$. It is given by
$$
[\zeta_m]_{\sigma_0}\cdot (x,y)= (x,\sigma_0(\zeta_m)y)\,.
$$ 
Define the $\sigma$-eigenspaces
$$
\begin{array}{l}
H^1_\sigma(C_p/W):=\Hcrys(C_p/W)\otimes_{\cO\otimes_\Z W,\sigma}W\,,\\[6pt]
H^0_{\sigma}(C_p,\Omega^1_{C_p/\F_p}):=H^0(C_p,\Omega^1_{C_p/\F_p})\otimes_{\cO\otimes_\Z \F_p,\sigma} \F_p\,.
\end{array}
$$
The commutativity of \eqref{equation: sigmalift} yields a commutative diagram
$$
\xymatrix{
\Hcrys(C_p/W)\ar[d]\ar[rr]^{\otimes_W \F_p} && \Hcrys(C_p/W)\otimes_W\F_p  \ar[d]\\
H_\sigma^1(C_p/W) \ar[r]^-{\otimes_W \F_p} & H_\sigma^1(C_p/W)\otimes_W\F_p \ar[r]^-{\simeq} & (\Hcrys(C_p/W)\otimes_W\F_p)\otimes_{\cO\otimes \F_p,\sigma }\F_p\,,
}
$$
where the vertical arrows are projections to the repective $\sigma$-eigenspaces.
By combining the bottom row of the above diagram with the functorial map \eqref{equation: functorialproj}, we obtain a map
\begin{equation}\label{equation: theseekedone}
H^1_\sigma(C_p/W)\rightarrow H^0_{\sigma}(C_p,\Omega^1_{C_p/\F_p})\sems
\end{equation}
that sends the absolute Frobenius $F_p$ to the Cartier operator $\cC_p$. Set the polynomials
\begin{equation}\label{equation: polysLandg}
\begin{array}{l}
L_p^\sigma(C,T):=\det(1-F_pT\,|\,H^1_\sigma(C_p/W))\,,\\[4pt]
g_p^\sigma(C,T):=\det(1-\cC_p T\,|\,H^0_\sigma(C_p,\Omega^1_{C_p/\F_p})\sems)=\det(1-\cC_p T\,|\,H^0_\sigma(C_p,\Omega^1_{C_p/\F_p}))\,.
\end{array}
\end{equation}
We now describe the basis of $H^0(C_p,\Omega^1_{C_p/\F_p})$ given by \cite[Lem. 6]{Sut20}. Namely, set 
$$
\mu:=m-\left\lfloor \frac{m}{d}\right\rfloor -1\,,\quad\text{and}\quad d_j:=d-\left\lfloor \frac{dj}{m}\right\rfloor-1\text{ for $1\leq j\leq \mu$,}
$$ 
and then define
\begin{equation}\label{equation: basis}
\omega_{i,j}=x^{i-1}y^{j-m}dx\qquad \text{for }1\leq j\leq \mu,\, 1\leq i\leq d_j\,.
\end{equation} 
We then denote by $\omegab$ the basis $(\omega_{11}, \omega_{12},\dots,\omega_{21},\dots)$, where the $\omega_{ij}$ are lexicographically ordered by subindex.
Let $A_p$ be the matrix of the Cartier operator $\cC_p$ acting on $H^0(C_p,\Omega^1_{C_p/\F_p})$ with respect to $\omegab$. We refer to $A_p$ as the \emph{Cartier--Manin matrix} of $C_p$ with respect to $\omegab$. 

Let $\omegab^{\sigma}$ denote the tuple $(\omega_{i,j(\sigma)})_i$, where $i$ runs over the interval $[1,d_{j(\sigma)}]$ and $j(\sigma)$ is as defined in \eqref{equation: jsigma}. From the relation
$$
[\zeta_m]^*_{\sigma_0}(\omega_{i,j(\sigma)})=\sigma_0(\zeta_m)^{j(\sigma)}\omega_{i,j(\sigma)}=\sigma(\zeta_m)\omega_{i,j(\sigma)}\,,
$$
we obtain that $\omegab^\sigma$ is a basis of the $\sigma$-eigenspace $H^0_\sigma(C_p,\Omega^1_{C_p/\F_p})$. Let us denote by $A_p^{\sigma}$ the matrix of the restriction of $\cC_p$ on this subspace with respect to~$\omegab^{\sigma}$.

\begin{lemma}\label{lemma: decred}
The following holds:
\begin{enumerate}[i)]
\item $L_p^\sigma(C,T)$ is a polynomial with coefficients in $\cO$ and degree $2g/\varphi(m)$.\vspace{0,1cm}
\item We have factorizations
$$
L_p(C,T)=\prod_{\sigma}L_p^\sigma(C,T)\,,\qquad g_p(C,T)=\prod_{\sigma}g_p^\sigma(C,T)
$$
where $\sigma=\sigma_\p:\cO\otimes_\Z W\rightarrow W$ runs through the $W$-algebra maps defined in~\eqref{equation: sigmalift}. Moreover, for every such $\sigma$ we have
$$
L_p^{\sigma}(C,T)\equiv g_p^\sigma(C,T)\equiv T^{d_{j(\sigma)}}\cdot \chi_p^\sigma(1/T)\qquad \pmod{\p_0}\,,
$$
where $\chi_p^\sigma$ is the characteristic polynomial of $A_p^\sigma$.
\end{enumerate}
\end{lemma}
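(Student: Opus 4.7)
The strategy is to exploit the decomposition $\cO\otimes_\Z W \simeq \prod_\sigma W$ (and, modulo $p$, $\cO\otimes_\Z \F_p\simeq \F_p^{\varphi(m)}$) coming from the fact that $p\in \fS(C)$ splits completely in $\cO=\Z[\zeta_m]$, together with the observation that the $\cO$-action \eqref{equation: ring homomorphism}, which is defined over $\Q(\zeta_m)$, specializes to an $\F_p$-rational action on $\Jac(C_p)$ (again because $p$ splits in $\Q(\zeta_m)$). This endows $\Hcrys(C_p/W)$ and $H^0(C_p,\Omega^1_{C_p/\F_p})$ with natural module structures over $\cO\otimes_\Z W$ and $\cO\otimes_\Z \F_p$, respectively, yielding
\[
\Hcrys(C_p/W)=\bigoplus_\sigma H^1_\sigma(C_p/W),\qquad H^0(C_p,\Omega^1_{C_p/\F_p})=\bigoplus_\sigma H^0_\sigma(C_p,\Omega^1_{C_p/\F_p}).
\]
Since $F_p$ and $\cC_p$ are $\F_p$-linear and commute with any endomorphism of $C_p$ defined over $\F_p$, they preserve each $\sigma$-summand. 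Taking determinants immediately gives the factorizations $L_p(C,T)=\prod_\sigma L_p^\sigma(C,T)$ and $g_p(C,T)=\prod_\sigma g_p^\sigma(C,T)$ in (ii).

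For (i), the rank of each $\sigma$-eigenspace follows by $\ell$-adic comparison: the rational Tate module $V_\ell(\Jac(C))$ is a module over the field $\cO\otimes_\Z\Q=\Q(\zeta_m)$, hence free, of rank $2g/\varphi(m)$; this rank is inherited by the $\sigma$-eigenspaces in crystalline cohomology, giving $\deg L_p^\sigma=2g/\varphi(m)$. To see that the coefficients of $L_p^\sigma$ lie in $\cO$ rather than merely $W$, I appeal again to the $\ell$-adic picture: the eigenvalues of $\Frob_p$ on $V_\ell(\Jac(C))$ are algebraic integers, and the partition of these eigenvalues into $\sigma$-eigenspaces is $\Gal(\Qbar/\Q)$-equivariant. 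Since the stabilizer of an embedding $\sigma\colon\cO\hookrightarrow \Qbar$ is $\Gal(\Qbar/\sigma(\Q(\zeta_m)))$, the elementary symmetric functions of the eigenvalues in a given $\sigma$-eigenspace lie in $\sigma(\Q(\zeta_m))\cap \Zbar=\sigma(\cO)$, identified with $\cO$ via the embedding determined by $v_p$.

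With the factorizations in (ii) in hand, it remains to verify the congruences. The functorial map \eqref{equation: theseekedone} is $\cO$-equivariant (all of the crystalline--de Rham comparison, reduction mod $p$, and Serre duality are functorial in $C_p$) and restricts to a map $H^1_\sigma(C_p/W)\to H^0_\sigma(C_p,\Omega^1_{C_p/\F_p})\sems$ intertwining $F_p$ with $\cC_p$. Reducing modulo $p$ in $W$---which, via $v_p$, corresponds to reducing modulo $\p_0$ in $\cO$---gives $L_p^\sigma(C,T)\equiv g_p^\sigma(C,T)\pmod{\p_0}$, once one notes that $\det(1-F_pT\,|\,\HdR_\sigma)$ is unchanged upon restriction to the semisimple part (the zero eigenvalues contribute trivial factors). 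The remaining identification $g_p^\sigma(T)=T^{d_{j(\sigma)}}\chi_p^\sigma(1/T)$ is the standard identity $\det(I_n-AT)=T^n\chi_A(1/T)$ applied to the matrix $A_p^\sigma$ of $\cC_p$ on $H^0_\sigma(C_p,\Omega^1_{C_p/\F_p})$, whose size is $d_{j(\sigma)}$ by the explicit basis $\omegab^\sigma=(\omega_{i,j(\sigma)})_{1\le i\le d_{j(\sigma)}}$ described in the excerpt.

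The principal technical subtlety is the coefficient-ring claim in (i): the crystalline construction produces coefficients only in $W$, and descending to $\cO$ requires carefully matching the crystalline $\sigma$-eigenspace decomposition to its $\ell$-adic counterpart through the fixed prime $v_p$, so that the uniform interpretation ``reduction modulo $p$ in $W$ equals reduction modulo $\p_0$ in $\cO$'' is valid across all $\sigma$ simultaneously.
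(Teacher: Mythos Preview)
Your proposal is correct and follows essentially the same approach as the paper: both obtain the factorizations in (ii) from the $\sigma$-eigenspace decomposition of $\Hcrys(C_p/W)$ and $H^0(C_p,\Omega^1_{C_p/\F_p})$, and both derive the congruence from the functorial map \eqref{equation: theseekedone} together with the explicit basis $\omegab^\sigma$. The only difference is cosmetic: for part (i) the paper simply cites \cite[Thm.~2.11, Thm.~2.12]{Rib76} in the $\ell$-adic setting and asserts that the same arguments apply $p$-adically, whereas you unpack this by sketching the freeness and Galois-descent argument directly (and you correctly flag the matching of the crystalline and $\ell$-adic $\sigma$-labelings via $v_p$ as the point requiring care).
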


\begin{proof}
Given the ring homomorphism \eqref{equation: ring homomorphism}, in the ``$\ell$-adic setting", part $i)$ is a special case of \cite[Thm. 2.11, Thm. 2.12]{Rib76}. The same arguments apply in the present ``$p$-adic setting". The first part of $ii)$ follows from the eigenspace decompositions
$$
H^1(C_p/W)\simeq \bigoplus_{\sigma} H^1_\sigma(C_p/W)\,,\qquad H^0_\sigma(C_p,\Omega^1_{C_p/\F_p})\simeq \bigoplus_{\sigma}H^0_\sigma(C_p,\Omega^1_{C_p/\F_p})\,.
$$ 
As for the second part, by \eqref{equation: theseekedone}, we have a congruence 
$$
L_p^\sigma(C,T)\equiv g^{\sigma}_p(C,T)\qquad \pmod {\p_0}\,,
$$
and the discussion in the paragraph preceeding this lemma implies that $g_p^\sigma(C,T)$ is the reversed characteristic polynomial of $A_p^\sigma$.
\end{proof}

\section{\texorpdfstring{$L$}{L}-polynomials of Picard curves: some theoretical results}\label{section: theoretical}

From now on, let $C$ be a Picard curve defined over $\Q$, that is, a curve admitting an affine model given by the equation
$$
y^3=f(x)\,,
$$ 
where $f(x)=x^ 4+f_2x^2+f_1x+f_0$ is a separable polynomial in which the $f_i$ are integers. Thus $C$ is a superelliptic curve for which $m=3$ and $d=4$, and accordingly we will denote $\Z[\zeta_3]$ by $\cO$ from now on.

\subsection{The split case}\label{section: splitcase} Recall from the introduction that $\fS(C)$ denotes the set of primes coprime to the discriminant of $f$ and congruent to $1$ modulo $3$. Resume also the notations from Section \ref{section: superelliptic}, with the following slight modifications: for a prime~$p$ in $\fS(C)$ we simply denote by $\p$ the prime of $\cO$ singled out by $v_p$ and by $\pbar$ the prime of $\cO$ such that $p\cO=\p\pbar$; we write~$\sigma$ and $\sigmabar$ for the maps $\sigma_\p$ and $\sigma_{\pbar}$. We also fix once and for all generators $\pi,\overline \pi$ in $\cO$ of the ideals~$\p,\pbar$.

With the notation of Section \ref{section: superelliptic}, we have that $d_1=2$ and $d_2=1$. Set $n_1:=(2p-2)/3$ and $n_2:=(p-1)/3$. Let $\tilde f\in \F_p[x]$ denote the reduction of $f\in \Z[x]$ modulo $p$ and let $f_a^n$ denote the coefficient of $x^a$ in $\tilde f(x)^n$. From \cite[(8)]{Sut20} we see that
\begin{equation}\label{equation: CartierManinSplit}
A_p^\sigma=\begin{pmatrix}
f_{p-1}^{n_1} & f_{p-2}^{n_1}\\
f_{2p-1}^{n_1} & f_{2p-2}^{n_1}
\end{pmatrix}\,,
\qquad
A_p^\sigmabar=\begin{pmatrix}
f_{p-1}^{n_2}
\end{pmatrix}\,.
\end{equation}
Recall the polynomials defined in \eqref{equation: polysLandg}. Note that $L^\sigma_p(C,T)$ and $L^\sigmabar_p(C,T)$ are complex conjugate to each other. Let us define their coefficients as:
$$
\begin{array}{l}
L^\sigma_p(C,T)=:1-a_\p T+b_\p T^2-c_\p T^3\,,\\[4pt]
L^\sigmabar_p(C,T)=:1-a_\pbar T+b_\pbar T^2-c_\pbar T^3\,,\\[4pt]
g^\sigma_p(C,T)=:1-r_\p T+s_\p T^2\,,\\[4pt]
g^\sigmabar_p(C,T)=:1-r_\pbar T\,.\\[4pt]
\end{array}
$$

Recall that a prime $p$ is said to be ordinary for $C$ if the central coefficient of $L_p(C,T)$ is not divisible by $p$, or equivalently if the polynomial $g_p(C,T)\in \F_p[T]$ has degree $3$. Our goal is to show that, for an ordinary prime $p$, the $L$-polynomial $L_p(C,T)$ can be recovered from the Cartier--Manin matrix.

\begin{lemma}\label{lemma: constantterm}
Let $p$ in $\fS(C)$ be of ordinary reduction for $C$. Then there exists a sixth root of unity $\zeta$ such that
\begin{equation}\label{equation: Heckechar}
c_\p=\zeta \overline\pi p\,.
\end{equation}
\end{lemma}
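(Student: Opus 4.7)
The plan is to show that the ideal $(c_\p) \subset \cO$ factors as $\p \pbar^2 = (\overline\pi p)$; since $\cO^\times = \{\pm 1, \pm \zeta_3, \pm \zeta_3^2\}$ is precisely the group of sixth roots of unity, this immediately yields $c_\p = \zeta\,\overline\pi\,p$ for some sixth root of unity $\zeta$.

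My first step is to extract divisibility of $c_\p$ by both $\p$ and $\pbar$ from Lemma \ref{lemma: decred}. Because $d_1 = 2$ and $d_2 = 1$, that lemma gives the congruences $L^\sigma_p(C,T) \equiv g^\sigma_p(C,T) \pmod{\p}$ and $L^\sigmabar_p(C,T) \equiv g^\sigmabar_p(C,T) \pmod{\p}$, where the right-hand sides have $T$-degree at most $2$ and $1$, respectively. Reading off the $T^3$-coefficients gives $c_\p \equiv 0 \pmod{\p}$ and $c_\pbar \equiv 0 \pmod{\p}$. Since $L^\sigmabar_p$ is obtained from $L^\sigma_p$ by applying the nontrivial Galois involution of $\Q(\zeta_3)/\Q$ to the coefficients, one has $c_\pbar = \overline{c_\p}$, so the second congruence translates to $v_\pbar(c_\p) \geq 1$. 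Combined with the identity $c_\p \cdot \overline{c_\p} = \det(\Frob_p\,|\,V_\ell(\Jac(C))) = p^3$, which gives $v_\p(c_\p) + v_\pbar(c_\p) = 3$, the pair $(v_\p(c_\p),\,v_\pbar(c_\p))$ is narrowed to either $(1,2)$ or $(2,1)$.

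The second step is to break this tie via the ordinary hypothesis, using a Newton polygon computation. Under the embedding $\sigma \colon \cO \hookrightarrow W \simeq \Z_p$ (which identifies $v_\p$ on $\cO$ with the $p$-adic valuation, because $p$ splits in $\cO$ and so $\cO_\p \simeq \Z_p$), I view $L^\sigma_p(C,T)$ as a polynomial in $\Z_p[T]$. Ordinariness of $p$ means $g_p(C,T) = g^\sigma_p \cdot g^\sigmabar_p$ has degree $3$, which forces both factors to realize the maximum degrees $2$ and $1$; in particular, the $T^2$-coefficient of $g^\sigma_p$ is a unit in $\F_p$, so $v_\p(b_\p) = 0$. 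Together with $v_\p(c_\p) \geq 1$, this yields Newton polygon slopes $0,0,v_\p(c_\p)$ for $L^\sigma_p$. On the other hand, ordinary reduction forces the Newton polygon of $L_p(C,T)$ to have slopes $\{0,0,0,1,1,1\}$, and the factorization $L_p = L^\sigma_p L^\sigmabar_p$ in $\Z_p[T]$ exhibits these slopes as the multiset union of the slopes of $L^\sigma_p$ and $L^\sigmabar_p$. Thus $v_\p(c_\p) \in \{0,1\}$, which combined with the lower bound from the previous step gives $v_\p(c_\p) = 1$ and $v_\pbar(c_\p) = 2$. Therefore $(c_\p) = \p \pbar^2 = (\overline\pi p)$, and the lemma follows.

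The main subtlety lies in the Newton polygon step: one must be careful that the $\p$-adic valuation on $\cO$ agrees with the $p$-adic valuation on $\Z_p$ under $\sigma$, and that the slopes of $L^\sigma_p$ and $L^\sigmabar_p$ computed through this single embedding really do reassemble into those of $L_p$. Both facts boil down to $p$ being split in $\cO$ together with the compatibility of the factorization $L_p = L^\sigma_p L^\sigmabar_p$ with extension of scalars along $\sigma$.
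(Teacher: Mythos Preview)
Your proof is correct and follows essentially the same route as the paper's: both arguments establish $v_\p(c_\p)\geq 1$ from the degree bound on $g^\sigma_p$, and then use a Newton polygon computation exploiting ordinariness (via $v_\p(b_\p)=0$) to force $v_\p(c_\p)=1$. The only cosmetic difference is in how the upper bound $v_\p(c_\p)\leq 1$ is phrased: the paper bounds the valuation of each individual reciprocal root by~$1$ using $\alpha_\p\overline{\alpha_\p}=p$, whereas you read it off from the sub-multiset of slopes inside the global ordinary Newton polygon $\{0,0,0,1,1,1\}$ of $L_p$; your extra observation $v_\pbar(c_\p)\geq 1$ is not strictly needed once the Newton polygon step is in place.
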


\begin{proof}
Since $c_\p$ divides $p^3$ and it is the product of three $p$-Weil numbers, there exists an integer $0\leq i \leq 3$ such that $c_\p=\zeta\pi^i\overline\pi^{3-i}$, where $\zeta$ is a unit of $\cO$ and thus a sixth root of unity. Since $g^\sigma_p(C,T)$ has degree $< 3$, we know that $i\geq 1$. Proving the lemma amounts to showing that $i=1$. Let $\alpha_\p$ denote a reciprocal root of $L_p^\sigma(C,T)$. Since $\alpha_p$ is an algebraic integer and $\alpha_\p\overline \alpha_\p=p$, we have $0\leq v_p(\alpha_p)\leq 1$. Since $p$ is ordinary, the polynomial $g^\sigma_p(C,T)$ has degree 2. The usual Newton polygon argument then shows that two of the reciprocal roots of $L_p^\sigma(C,T)$ have $v_p$-adic valuation $0$. Hence the $v_p$-adic valuation of $c_\p$ is that of the third reciprocal root of $L_p^\sigma(C,T)$, which as argued before is at most $1$. The Lemma follows. 
\end{proof}

\begin{lemma}\label{lemma: bpord}
For $p$ in $\fS(C)$ the following hold:
\begin{enumerate}[i)]
\item $pb_\p=c_\p \cdot a_\pbar$.
\item If $b_\p$ is not divisible by $\pi$, then $a_\pbar$ is not divisible by $\pi$.
\item $p$ is ordinary for $C$ if and only if $b_\p$ is not divisible by $\pi$.
\item $p$ is ordinary for $C$ if and only if $b_\p$ (or, equivalently, $b_\pbar$) is not divisible by $p$. 
\end{enumerate}
\end{lemma}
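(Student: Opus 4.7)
The plan is to prove the four parts in order, using two main ingredients throughout. The first is the relation between reciprocal roots of $L_p^\sigma(C,T)$ and $L_p^\sigmabar(C,T)$: since these polynomials are complex conjugate (as noted in the paragraph preceding the lemma), and since each reciprocal root $\alpha_i$ of $L_p^\sigma$ satisfies the Weil bound $|\alpha_i|^2=p$, the reciprocal roots of $L_p^\sigmabar$ are $\bar\alpha_i=p/\alpha_i$. The second is the reduction $L_p^\sigma(C,T)\equiv g_p^\sigma(C,T)\pmod{\p}$ from Lemma~\ref{lemma: decred}(ii), together with the degree bounds $\deg g_p^\sigma\leq d_1=2$ and $\deg g_p^\sigmabar\leq d_2=1$ that were computed at the start of Section~\ref{section: splitcase}.

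Part i) is then a direct symmetric-function computation:
$$
a_\pbar=\sum_i\bar\alpha_i=p\sum_i\frac{1}{\alpha_i}=\frac{pb_\p}{c_\p},
$$
which rearranges to $c_\p a_\pbar=pb_\p$. For part ii), the second ingredient forces $\pi\mid c_\p$, so $v_\pi(c_\p)\geq 1$. Taking $\pi$-valuations in the identity of i) gives $v_\pi(c_\p)+v_\pi(a_\pbar)=1+v_\pi(b_\p)$. Under the hypothesis $v_\pi(b_\p)=0$, this equation combined with $v_\pi(c_\p)\geq 1$ and $v_\pi(a_\pbar)\geq 0$ (from $a_\pbar\in\cO$) pins down $v_\pi(c_\p)=1$ and $v_\pi(a_\pbar)=0$, giving $\pi\nmid a_\pbar$.

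For part iii), $p$ is ordinary iff $g_p(C,T)$ has degree $3$; with $g_p=g_p^\sigma g_p^\sigmabar$ and the degree bounds above, this is equivalent to $\deg g_p^\sigma=2$ and $\deg g_p^\sigmabar=1$, which via $L_p^\sigma\equiv g_p^\sigma$ and $L_p^\sigmabar\equiv g_p^\sigmabar$ modulo $\p$ translates to $\pi\nmid b_\p$ and $\pi\nmid a_\pbar$ respectively. Part ii) says the first condition already forces the second, so $p$ is ordinary iff $\pi\nmid b_\p$. For part iv), the reduction of $L_p^\sigmabar$ modulo $\p$ together with $\deg g_p^\sigmabar\leq 1$ forces $\p\mid b_\pbar$. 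Since $b_\pbar=\overline{b_\p}$ and complex conjugation swaps $\p$ and $\pbar$, this means $\bar\pi\mid b_\p$ automatically, so $p=\pi\bar\pi$ divides $b_\p$ iff $\pi$ does. Combining with iii) gives $p$ ordinary iff $p\nmid b_\p$, and the equivalence with $p\nmid b_\pbar$ follows from $b_\pbar=\overline{b_\p}$.

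The main step requiring care is the simultaneous use of Lemma~\ref{lemma: decred}(ii) on both the $\sigma$- and $\sigmabar$-sides, yielding the complementary constraints $\p\mid c_\p$ and $\p\mid b_\pbar$; keeping the complex-conjugation bookkeeping straight between $\pi$-adic and $\bar\pi$-adic valuations is the only place where one could slip up. Once these constraints are in hand, every assertion reduces to elementary valuation arithmetic anchored on the single identity $c_\p a_\pbar=pb_\p$ from part i).
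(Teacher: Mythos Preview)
Your proof is correct and follows essentially the same approach as the paper's: both use the complex-conjugate pairing $\alpha_i\bar\alpha_i=p$ of reciprocal roots to obtain the identity in i), take $\pi$-adic valuations in this identity together with the degree bound $\deg g_p^\sigma\leq 2$ for ii), combine the degree constraints on $g_p^\sigma$ and $g_p^\sigmabar$ with ii) for iii), and deduce iv) from the automatic divisibility $\bar\pi\mid b_\p$. Your argument for iv) is in fact slightly more explicit than the paper's, which simply asserts that $b_\p$ is always divisible by $\bar\pi$; your derivation of this from $\p\mid b_\pbar$ via the $\sigmabar$-side reduction is exactly the justification one would want.
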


\begin{proof}
Complex conjugation interchanges the roots of $L_p^\sigma(C,T)$ and those of $L_p^\sigmabar(C,T)$. Therefore we can pair each reciprocal root $\alpha_\p$ of $L_p^\sigma(C,T)$ with a reciprocal root $\alpha_\pbar$ of $L_p^\sigmabar(C,T)$ in such a way that $\alpha_\p\cdot \alpha_{\pbar}=p$, and this implies $i)$ (c.f. \cite[(3.3)]{BTW05}). The coprimality of $\pi$ and $\overline\pi$ and part~$i)$, imply that $v_p(a_\pbar)=1+v_p(b_\p)-v_p(c_\p)$. Since $g_p^\sigma(C,T)$ has degree $\leq 2$, we have $v_p(c_\p)\geq 1$. Thus $v_p(a_\pbar)\leq v_p(b_\p)$, and $ii)$ follows. By Lemma \ref{lemma: decred}, the prime $p$ is ordinary for~$C$ if and only if
$$
g_p^\sigma(C,T)\cdot g_p^\sigmabar(C,T)=g_p(C,T)\in \F_p[T]
$$
has degree 3. This happens if and only if $s_\p,r_\pbar$ are nonzero, which by $ii)$ amounts to saying that $b_\p$ is not divisible by $\pi$. This shows $iii)$. Finally, note that since $b_\p$ is always divisible by $\overline \pi$, divisibility by $p$ amounts to divisibility by $\pi$.
\end{proof}

\begin{corollary}\label{corollary: CartierManin}
Let $p\geq 53$ be a prime in $\fS(C)$ ordinary for $C$. Then the Cartier--Manin matrix $A_p$ of $C$ at $p$ uniquely determines the $L$-polynomial $L_p(C,T)$.
\end{corollary}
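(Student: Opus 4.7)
The plan is to reconstruct $L_p^\sigma(C,T)=1-a_\p T+b_\p T^2-c_\p T^3\in \cO[T]$ from the Cartier--Manin matrix $A_p$; since $L_p^\sigmabar(C,T)$ is the complex conjugate of $L_p^\sigma(C,T)$, together they recover $L_p(C,T)=L_p^\sigma(C,T)L_p^\sigmabar(C,T)$. From $A_p$ I read off the three scalars $r_\p:=\Tr(A_p^\sigma)$, $s_\p:=\det(A_p^\sigma)$, and $r_\pbar:=A_p^\sigmabar\in\F_p$, which by Lemma~\ref{lemma: decred} coincide respectively with the reductions modulo $\p$ of $a_\p$, $b_\p$, and $a_\pbar$.

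I first recover $a_\p\in\cO$ exactly. Writing $a_\p=A+B\zeta_3$ with $A,B\in\Z$ and setting $\omega:=\sigma(\zeta_3)\in\F_p$, the two reductions unpack as $r_\p=A+B\omega$ and $r_\pbar=A+B\omega^2$; inverting the matrix $\bigl(\begin{smallmatrix}1&\omega\\1&\omega^2\end{smallmatrix}\bigr)$ (whose determinant $\omega^2-\omega$ is invertible in $\F_p$ for $p\neq 3$) determines the pair $(A\bmod p,\,B\bmod p)$. The Weil bound $A^2-AB+B^2=a_\p\overline{a_\p}\leq 9p$ then forces $|A|,|B|\leq 2\sqrt{3p}$, and two integers in the interval $[-2\sqrt{3p},\,2\sqrt{3p}]$ with the same residue modulo $p$ must coincide as soon as $4\sqrt{3p}<p$, i.e., $p>48$. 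This numerical inequality is the source of the hypothesis $p\geq 53$, and with it $a_\p$, hence also $a_\pbar$, is uniquely pinned down.

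It remains to identify the sixth root of unity $\zeta$ in the identity $c_\p=\zeta\,\overline\pi\,p$ supplied by Lemma~\ref{lemma: constantterm}: once $\zeta$ is known, Lemma~\ref{lemma: bpord}(i) gives $b_\p=c_\p a_\pbar/p=\zeta\,\overline\pi\,a_\pbar$, completing $L_p^\sigma$. Reducing this formula for $b_\p$ modulo $\p$ yields
$$
s_\p\;\equiv\;\zeta\cdot(\overline\pi\bmod\p)\cdot r_\pbar\pmod{\p}.
$$
Because $v_\p(c_\p)\geq 1$ (Lemma~\ref{lemma: decred}), the relation $pb_\p=c_\p a_\pbar$ together with $v_\p(b_\p)=0$ (equivalent to ordinarity by Lemma~\ref{lemma: bpord}(iii)) forces $v_\p(a_\pbar)=0$, so $r_\pbar\neq 0$ in $\F_p$. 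As $\overline\pi$ is a unit modulo $\p$, the right-hand side of the displayed congruence is an injective function of $\zeta$ over the six sixth roots of unity in $\cO$; these remain distinct in $\F_p^\times$ since $6\mid p-1$ for any odd $p\in\fS(C)$. Hence $\zeta$, and consequently $b_\p$ and $c_\p$, are determined by $A_p$.

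The main obstacle is the Weil-bound lifting step for $a_\p$: the inequality $4\sqrt{3p}<p$ is precisely what motivates the $p\geq 53$ hypothesis. Every other ingredient---the arithmetic identity for $c_\p$, the nonvanishing of $r_\pbar$ under ordinarity, and the separation of sixth roots of unity modulo $\p$---follows formally from the machinery already established in the preceding two lemmas.
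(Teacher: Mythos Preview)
Your proof is correct and follows essentially the same approach as the paper: recover $a_\p$ from the pair $(r_\p,r_\pbar)$ via the $2\times 2$ Vandermonde system and the Weil bound (yielding the $p>48$ threshold), then recover the sixth root of unity $\zeta$ in $c_\p=\zeta\overline\pi p$ from the congruence $s_\p\equiv \zeta\,\sigma(\overline\pi)\,r_\pbar\pmod\p$, using ordinarity to ensure $r_\pbar\neq 0$. The only cosmetic differences are that you re-derive the nonvanishing of $r_\pbar$ instead of citing Lemma~\ref{lemma: bpord}(ii), and you make explicit the separation of sixth roots of unity in $\F_p^\times$ (since $6\mid p-1$), which the paper leaves implicit.
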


\begin{proof}
We will in fact show that $g^\sigma_p(C,T)$ and $g^\sigmabar_p(C,T)$ already uniquely determine $L_p(C,T)$. By Lemma \ref{lemma: decred} and part $i)$ of Lemma \ref{lemma: bpord}, in order to determine $L_p(C,T)$ it suffices to determine $a_\p$ and $c_\p$. Write $a_\p=x+\zeta_3 y$ for some integers~$x$ and~$y$. Recall the map $\sigma\colon \cO\otimes \F_p\rightarrow \F_p$, originating from the prime of $\cO$ singled out by $v_p$. Recall that the generator of this prime has been denoted $\pi$. Given $\alpha\in \cO$, by abuse of notation, let us write $\sigma(\alpha)$ to denote $\sigma(\alpha\otimes 1)$. Then the reductions of $x$ and $y$ modulo $p$ are uniquely determined by the invertible linear system
\begin{equation}\label{equation: linear system}
\begin{pmatrix}
1 & \sigma(\zeta_3)\\
1 & \sigmabar(\zeta_3)
\end{pmatrix}
\begin{pmatrix}
x\\
y
\end{pmatrix}
=
\begin{pmatrix}
r_\p\\
r_\pbar
\end{pmatrix}
\,.
\end{equation}
From the trivial inequalities $(2y-x)^2\geq 0$ and $(2x-y)^2\geq 0$ and the Weil bound $|a_\p|\leq 3\sqrt p$, one finds that
$$
\max\{x^2,y^2\}\leq \frac{4}{3}(x^2+y^2-xy)=\frac{4}{3}|a_\p|^2\leq 12p\,.
$$ 
Therefore the reductions of $x$ and $y$ modulo $p$ determine $x$ and $y$ uniquely as soon as $p\geq 53$. The coefficient $c_\p$ is uniquely determined by the sixth root of unity $\zeta\in \cO$ appearing in \eqref{equation: Heckechar}. Note that $\zeta$ is determined by its image $\sigma(\zeta)\in \F_p$. Using that $p$ is ordinary, by Lemma \ref{lemma: bpord} the equation
\begin{equation}\label{equation: determinezeta}
\sigma(\zeta)= \frac{\sigma(b_\p)}{\sigma(a_\pbar)\sigma(\overline \pi)}=\frac{s_\p}{r_\pbar\sigma(\overline \pi)}
\end{equation}
makes sense in $\F_p$, and it determines $\sigma(\zeta)$. 
\end{proof}

In Section \ref{section: algorithmic}, Corollary \ref{corollary: CartierManin} will be used to describe an algorithm to compute the $L_p(C,T)$ for ordinary primes $p$ in $\fS(C)$. 

In Section \ref{section: densityquestions} we will study the density of ordinary primes. In order to do so, we will need to apply an analogue of Lemma~\ref{lemma: bpord} to certain ``$\lambda$-adic counterparts" of the polynomials $L^\sigma_p(C,T)$. We conclude this section by defining these polynomials and discussing analogues of Lemma~\ref{lemma: constantterm} and Lemma~\ref{lemma: bpord} in the $\lambda$-adic setting. Let $\ell\not =p$ be a prime totally split in $\cO$, and let $\lambda$ and $\overline\lambda$ denote the primes of $\cO$ lying above~$\ell$. Denote by $\cO_\lambda$ the completion of $\cO$ at $\lambda$ and consider the module
\begin{equation}\label{equation: spacefirst}
H^1_\lambda(\Cbar):=\Het(\Cbar,\Z_\ell)\otimes_{\cO\otimes \Z_\ell,\sigma_\lambda}\Z_\ell\,.
\end{equation}
The above tensor product is taken with respect to the $\Z_\ell$-algebra map
$$
\sigma_\lambda\colon \cO\otimes \Z_\ell \simeq \cO_\lambda \oplus \cO_{\overline\lambda}\xrightarrow{\pr_\lambda} \Z_\ell\,,
$$
where $\pr_\lambda$ denotes projection from the $\cO_\lambda$-component.
We define an action of the absolute Galois group $G_{\Q(\zeta_3)}$ on $H^1_\lambda(\overline C)$ by letting it act naturally on $\Het(\Cbar,\Z_\ell)$ and trivially on $\Z_\ell$. Since $p$ splits in $\cO$, we have $\Frob_p\in D_{v_p}\subseteq G_{\Q(\zeta_3)}$, and the polynomial
$$
L_p^\lambda(C,T):=\det(1-\Frob^{-1}_pT\,|\,H^1_{\lambda}(\Cbar))
$$
is well defined. By \cite[Thm. 2.11, Thm. 2.12]{Rib76}, it has coefficients in $\cO$ and degree $3$.
Define similarly $H^1_{\lambdabar}(\Cbar)$ and $L^\lambdabar_p(C,T)$. The decomposition $\Het(\Cbar,\Z_\ell)\simeq H^1_\lambda(\Cbar)\oplus H^1_\lambdabar(\Cbar)$ implies that
$$
L_p(C,T)=L_p^\lambda(C,T)\cdot L_p^{\lambdabar}(C,T)\,.
$$
Note that the polynomials $L^\lambda_p(C,T)$ and $ L^\lambdabar_p(C,T)$ are complex conjugate to each other. Set the following notation for their coefficients
$$
L^\lambda_p(C,T)=:1-a_{\p,\lambda} T+b_{\p,\lambda} T^2-c_{\p,\lambda} T^3\,,\quad L^\lambdabar_p(C,T)=:1-a_{\p,\lambdabar} T+b_{\p,\lambdabar} T^2-c_{\p,\lambdabar} T^3\,.
$$ 

\begin{remark}\label{remark: lambdaconstant} 
Let $\delta_\lambda$ denote $\det(H^1_\lambda(\Cbar))$. By \cite[Thm. 14]{Fit20}, the $1$-dimensional representation~$\delta_\lambda$ is a Hecke character of infinity type equivalent to $(1,2)$. After interchanging $\lambda$ and $\lambdabar$ if necessary, this implies that 
$$
\delta_\lambda(\Frob_p)=\zeta \overline\pi p\,,
$$
where $\zeta$ is a root of unity in $\cO$, and hence a sixth root of unity. It follows that $c_{\p,\lambda}=\delta_\lambda(\Frob_p)=\zeta \overline\pi p$, in analogy with Lemma \ref{lemma: constantterm}.  
\end{remark}

\begin{remark} By unique factorization in $\cO[T]$, if $L_p^\sigma(C,T)$ is irreducible, then after interchanging $\lambda$ and $\lambdabar$ we have that 
\begin{equation}\label{equation: comparisonpolys}
L_p^{\sigma}(C,T)=L_p^{\lambda}(C,T)\,.
\end{equation}
Hence when $L_p^\sigma(C,T)$ is irreducible, \eqref{equation: comparisonpolys} together with Remark \ref{remark: lambdaconstant} provides an alternative proof of Lemma~\ref{lemma: constantterm}. It may be interesting to investigate to which further generality equality \eqref{equation: comparisonpolys} holds, which may be possible via comparison isomorphisms in cohomology. Instead of taking this approach in this paper, we will show that the polynomials $L_p^\lambda(C,T)$ and $L_p^\lambdabar(C,T)$ satisfy the same desired properties that $L_p^\sigma(C,T)$ and $L_p^\sigmabar(C,T)$ satisfy. This is explained in the next remark.
\end{remark}

\begin{remark}\label{remark: analogue}
A glimpse at the proof of Lemma~\ref{lemma: bpord} shows that the only properties of $L_p^{\sigma}(C,T)$ and $L_p^{\sigmabar}(C,T)$ that it uses are that they belong to $\cO[T]$, that they are complex conjugates to each other, that their product is $L_p(C,T)$, and that the reductions of $L^\sigma_p(C,T)$ and $L^\sigmabar_p(C,T)$ modulo $p$ have degree $\leq 2$. That $L_p^{\lambda}(C,T)$ and $L_p^{\lambdabar}(C,T)$ satisfy the first three properties follows from their definition. In virtue of Remark \ref{remark: lambdaconstant}, the fourth property also holds. Therefore Lemma \ref{lemma: bpord} also holds after replacing $a_{\pbar},b_{\p},c_{\p}$ with $a_{\p,\lambdabar},b_{\p,\lambda},c_{\p,\lambda}$. 
\end{remark}

\begin{remark}
It would be interesting to investigate to which extent the results of this section admit generalizations to superelliptic curves of the form $y^4=f(x)$, where $f$ is a degree 3 polynomial. 
\end{remark}

\subsection{The inert case}\label{section: inertcase} Let $\fI(C)$ denote the set of rational primes coprime to the discriminant of $f$ and congruent to $2$ modulo $3$. In particular, any prime of $\fI(C)$ is of good reduction for $C$.

\begin{lemma}\label{lemma: pmod3eq2} 
For every $p$ in $\fI(C)$ there exist algebraic integers $\alpha,\alphabar$ satisfying $\alpha\cdot \overline\alpha=p$ such that
$$
L_p(C,T)=(1+pT^2)(1-\alpha^2T^2)(1-\alphabar^2T^2)\,.
$$
In particular, $L_p(C,T)=(1+pT^2)(1-t_pT^2+p^2T^4)$, where $t_p$ is an integer such that $|t_p|\leq 2p$.
\end{lemma}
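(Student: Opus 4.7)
The plan is to exploit the fact that for $p\equiv 2\pmod 3$ the cubing map $y\mapsto y^3$ is a bijection on every odd-degree extension of $\F_p$, and then combine this with the functional equation and the Weil bound. First, for any odd $n\geq 1$ we have $p^n\equiv 2\pmod 3$, so $\gcd(3,p^n-1)=1$ and $y\mapsto y^3$ is a bijection on $\F_{p^n}$. Hence each $x\in\F_{p^n}$ admits a unique $y\in\F_{p^n}$ with $y^3=f(x)$; including the unique point at infinity of the smooth projective model of $C_p$, this gives $\#C_p(\F_{p^n})=p^n+1$. By the Lefschetz trace formula, $\sum_{i=1}^{6}\beta_i^n=0$ for every odd $n$, where $\beta_1,\dots,\beta_6$ denote the reciprocal roots of $L_p(C,T)$. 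Applying Newton's identities at $n=1,3,5$ forces the odd elementary symmetric functions $e_1,e_3,e_5$ of the $\beta_i$ to vanish, so $L_p(C,T)\in \Z[T^2]$.

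Next I would force the factor $1+pT^2$ using the functional equation. Writing $L_p(C,T)=1+c_1T^2+c_2T^4+c_3T^6$, the identity $L_p(C,T)=p^3T^6L_p(C,1/(pT))$ gives $c_3=p^3$ and $c_2=pc_1$, so $L_p(C,T)=R(T^2)$ with
\[
R(T)=1+c_1T+pc_1T^2+p^3T^3\,.
\]
A direct substitution shows $R(-1/p)=0$, so $(1+pT^2)$ divides $L_p(C,T)$ in $\Z[T]$, yielding
\[
L_p(C,T)=(1+pT^2)(1-t_pT^2+p^2T^4)
\]
for some $t_p\in \Z$. Factor the quartic over $\Qbar$ as $(1-\alpha^2T^2)(1-\alphabar^2T^2)$; the reciprocal roots $\pm\alpha,\pm\alphabar$ of this quartic factor are algebraic integers of absolute value $\sqrt{p}$ by the Weil bound, and choosing $\alphabar$ as the complex conjugate of $\alpha$ gives $\alpha\alphabar=|\alpha|^2=p$ and $|t_p|=|\alpha^2+\alphabar^2|\leq 2p$.

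The argument is essentially elementary once one has the Lefschetz trace formula, the functional equation for the $L$-polynomial of a curve over $\F_p$, and the Weil bound, so I do not anticipate any substantial obstacle. The only mild check is that the smooth projective model of $C_p$ contributes exactly one point at infinity to the count, which is standard for a Picard curve $y^3=f(x)$ with $\deg f=4$: since $\gcd(3,4)=1$, the place above $x=\infty$ is totally ramified and contributes a single rational point that is fixed by every power of Frobenius.
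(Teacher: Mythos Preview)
Your argument is correct and follows essentially the same route as the paper, which simply cites \cite{BTW05} after noting that $\#C_p(\F_p)=p+1$ and $\#C_p(\F_{p^3})=p^3+1$. One minor redundancy: once you invoke the functional equation, the vanishing of $e_5$ is automatic from $e_1=0$, so the point count at $n=5$ is not needed; conversely, the paper's two counts $n=1,3$ together with the functional equation already force $a_1=a_3=a_5=0$ and the rest of your computation goes through unchanged.
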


\begin{proof}
The statement follows from the fact that $\#C_p(\F_p)=1+p$ and $\#C_p(\F_{p^3})=1+p^3$ (see \cite[Lemma 2.1]{BTW05}).
\end{proof}

Let $\tilde f\in \F_p[x]$ denote the reduction of $f$ modulo $p$.

\begin{lemma}\label{lemma: fmodp}
For every $p$ in $\fI(C)$, we have:
$$
t_p\equiv
\begin{cases}
2 \pmod 3 & \text{ if $\tilde f\in \F_p[x]$ is reducible,}\\
1 \pmod 3 & \text{ if $\tilde f\in \F_p[x]$ is irreducible.}
\end{cases}
$$
\end{lemma}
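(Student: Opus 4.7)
The plan is to compute $\#C_p(\F_{p^2}) \pmod 3$ two different ways: once via the $T^2$ coefficient of $L_p(C,T)$ (which involves $t_p$), and once by counting points directly on the affine model $y^3 = \tilde f(x)$.

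First I would express $t_p$ in terms of $N_2 := \#C_p(\F_{p^2})$. Combining $Z(C_p,T) = L_p(C,T)/((1-T)(1-pT))$ with the shape of $L_p(C,T)$ given by Lemma \ref{lemma: pmod3eq2}, a direct computation of the reciprocal roots (they are $\pm i\sqrt p,\pm \alpha,\pm \alphabar$ with $\alpha\alphabar=p$ and $\alpha^2+\alphabar^2=t_p$) yields
$$
N_2 = 1+p^2 - \sum_i \alpha_i^2 = 1 + p^2 + 2p - 2t_p = (1+p)^2 - 2t_p.
$$
Since $p\equiv 2\pmod 3$ we have $(1+p)^2\equiv 0\pmod 3$, and as $2\equiv -1\pmod 3$, this gives the crucial congruence $t_p\equiv N_2\pmod 3$.

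Next I would count $N_2\pmod 3$ directly. Because $\gcd(3,4)=1$, the Picard curve $C$ has a unique point at infinity, so $N_2 = 1 + \#\{(x,y)\in \F_{p^2}^2 : y^3=\tilde f(x)\}$. The key arithmetic input is that $p^2\equiv 1\pmod 3$, so the cubing map on $\F_{p^2}$ has kernel of order $3$ on $\F_{p^2}^\times$ and fixes $0$; hence for any $c\in \F_{p^2}$ the number of cube roots of $c$ in $\F_{p^2}$ is $\equiv \mathbf{1}_{c=0}\pmod 3$. Summing over $x\in \F_{p^2}$ yields
$$
N_2 \equiv 1 + \#\{x\in \F_{p^2}: \tilde f(x)=0\}\pmod 3.
$$

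Finally, I would enumerate the possible factorization types of $\tilde f$ over $\F_p$ (noting that $\tilde f$ is separable because $p$ is coprime to the discriminant of $f$), and count the roots lying in $\F_{p^2}$. The partition $(4)$ (irreducible) gives $0$ roots in $\F_{p^2}$, while each of $(3,1),(2,2),(2,1,1),(1,1,1,1)$ gives $1$ or $4$ roots, all $\equiv 1\pmod 3$. Thus $N_2\equiv 1\pmod 3$ when $\tilde f$ is irreducible and $N_2\equiv 2\pmod 3$ when $\tilde f$ is reducible; combined with $t_p\equiv N_2\pmod 3$ this yields the claim. There is no real obstacle here: the only point requiring care is the standard observation that the Picard curve has a single point at infinity, which is automatic from $\gcd(m,d)=1$.
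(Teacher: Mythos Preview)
Your proof is correct and follows essentially the same route as the paper's own argument: both compute $\#C_p(\F_{p^2})=(1+p)^2-2t_p$ from the factored form of $L_p(C,T)$ in Lemma~\ref{lemma: pmod3eq2} to obtain $t_p\equiv \#C_p(\F_{p^2})\pmod 3$, and then count points on $y^3=\tilde f(x)$ over $\F_{p^2}$ modulo $3$ via the cube map and the number of $\F_{p^2}$-roots of $\tilde f$. Your case analysis of partitions of $4$ is exactly the casework the paper does (phrased there as $n(\tilde f)\in\{0,1,4\}$), and the single point at infinity is handled identically.
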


\begin{proof}
On the one hand, from Lemma \ref{lemma: pmod3eq2} one easily finds that
$$
\#C_p(\F_{p^2})=(1+p)^2-2(\alpha^2+\alphabar^2)\equiv t_p\pmod 3\,. 
$$
On the other hand, since every nonzero element of $\F_{p^2}$ has three distinct cubic roots and $\Cbar_p$ has a single ($\F_p$-rational) point at infinity $P_\infty$, its number of points over~$\F_{p^2}$ satisfies
$$
\#C_p(\F_{p^2})\equiv 1+ n(\tilde f) \pmod 3\,,
$$
where $n(\tilde f)$ is the number of roots of $\tilde f$ over $\F_{p^2}$. The lemma follows from the fact that $n(\tilde f)=0$, $1$, or $4$ depending on whether $\tilde f\in \F_p[x]$ is irreducible, decomposes and has an irreducible factor of degree $3$, or decomposes in factors of degree at most~$2$, respectively. 
\end{proof}

Lemma \ref{lemma: fmodp} tells that for every $p$ in $\fI(C)$ the reduction of $L_p(C,T)$ modulo~$3$ is uniquely determined by the factorization of $\tilde f\in \F_p[x]$. Let us give an alternative proof of this fact, which will also extend to the case that $p$ belongs to $\fS(C)$. The method is an adaptation of the technique used in \cite{CDF20}, where the hyperelliptic case (of arbitrary genus) was considered. 

We will first need to introduce some notations. Let $\Jac(C)$ denote the Jacobian variety of $C$. Let $\alpha_i\in \Qbar$, for $1\leq i\leq 4$, denote the roots of $f\in \Z[x]$, and denote by $E_i$ the degree~$0$ divisor $(\alpha_i,0)-P_\infty$. By \cite[Prop. 2.1]{Aru20}, there is a degree~$3$ effective divisor~$\tilde D_i$ such that 
$$
(1-\zeta_3)\tilde D_i \sim E_i\,.
$$ 
Let $D_i$ denote the degree~$0$ divisor $\tilde D_i-3P_\infty$, and thus also $(1-\zeta_3) D_i \sim E_i$. 

Note that $3E_i, 3D_i\sim 0$, and let $V_1$ denote the subspace of $\Jac(C)[3](\Qbar)$ generated by the classes $[E_i]$. Note that $V_1$ is stable under the Galois action. Let $[\overline D_i]$ denote the image of $[D_i]$ in the quotient $V_2:=\Jac(C)[3](\Qbar)/ V_1$. 

\begin{lemma}\label{lemma: 3torsion}
 $[\overline D_1], [\overline D_2],[\overline D_3]$ is a basis for $V_2$.
\end{lemma}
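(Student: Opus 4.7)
My plan is first to show that $E_1, E_2, E_3$ are $\F_3$-linearly independent in $\Jac(C)[3](\Qbar)$, so that they form an $\F_3$-basis of $V_1$ (in view of the relation $\sum_{i=1}^4 [E_i]=0$ coming from $\text{div}(y)$). Since $\dim_{\F_3}\Jac(C)[3](\Qbar)=2g=6$, this would force $\dim_{\F_3} V_2=3$, and it would then suffice to exhibit three linearly independent elements in $V_2$.

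The key step is the independence of $E_1, E_2, E_3$. I would argue by contradiction: suppose a relation $\sum_{i=1}^4 a_i E_i\sim 0$ holds with $a_i\in\{0,1,2\}$ not all equal. Then some $h\in\Qbar(C)^*$ satisfies $\text{div}(h)=\sum_{i=1}^4 a_i(\alpha_i,0)-nP_\infty$ with $n=\sum a_i$. Since $\text{div}(h)$ is supported on the $\zeta_3$-fixed locus of $C$, $\zeta_3^* h/h$ has trivial divisor and equals a constant $c$ with $c^3=1$. Writing $c=\zeta_3^k$ and decomposing $\Qbar(C)=\Qbar(x)\oplus \Qbar(x)\,y\oplus \Qbar(x)\,y^2$ (on which $\zeta_3^*$ acts as $\zeta_3^k$ on the $k$-th summand), the identity $\zeta_3^* h=\zeta_3^k h$ forces $h=h_k(x)\,y^k$ for some $h_k\in\Qbar(x)$. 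Because $y$ is a uniformizer at $(\alpha_i,0)$ and $x-\alpha_i$ has order $3$ there, one has $\text{ord}_{(\alpha_i,0)}(h)=3\,\ord_{\alpha_i}(h_k)+k$, which equals $a_i$; thus $a_i\equiv k\pmod 3$ for every $i$, contradicting the assumption that the $a_i$ are not all equal in $\F_3$.

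To conclude, I would show that $[\overline D_1], [\overline D_2], [\overline D_3]$ are $\F_3$-linearly independent in $V_2$. If $\sum_{i=1}^3 a_i[\overline D_i]=0$, then $\sum a_i D_i\in V_1$; but $V_1\subseteq \Jac(C)[1-\zeta_3]$ because every $E_i$ is fixed by $\zeta_3$, so applying $(1-\zeta_3)$ annihilates $V_1$ and sends each $D_i$ to $E_i$. Hence $\sum a_i E_i=0$ in $\Jac(C)$, and the independence of $E_1,E_2,E_3$ proved above forces $a_i=0$. Together with $\dim V_2=3$, this makes the three classes a basis. I anticipate the main hurdle will be the eigenfunction step: extracting from a general function $h$ with $\zeta_3$-invariant divisor the conclusion that $h$ is a pure eigencomponent $h_k(x)y^k$, and then reading off $a_i\equiv k\pmod 3$ from the local uniformizer $y$ at each $(\alpha_i,0)$. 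The compatibility check at $P_\infty$, which amounts to $n=3\deg h_k+4k$, imposes no further constraint.
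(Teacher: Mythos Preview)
Your proof is correct, and the second half (deducing independence of the $[\overline D_i]$ from independence of the $[E_i]$ by applying $1-\zeta_3$) is exactly the argument the paper gives. The difference lies in how you establish that $[E_1],[E_2],[E_3]$ are independent and that $\dim_{\F_3}V_1=3$. The paper simply invokes Schaefer's result \cite[Prop.~3.2]{Sch98} identifying $V_1$ with $\Jac(C)[1-\zeta_3](\Qbar)$ as a $3$-dimensional $\F_3$-vector space. You instead give a self-contained argument: from a hypothetical relation $\sum a_iE_i\sim 0$ you extract a function $h$ with $\zeta_3$-invariant divisor, force $h=h_k(x)y^k$ via the eigenspace decomposition of $\Qbar(C)$ under $\zeta_3^*$, and read off $a_i\equiv k\pmod 3$ from the local orders at $(\alpha_i,0)$. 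This is a genuinely more elementary route that avoids the external citation; the price is a slightly longer argument, and what you gain is that the proof becomes entirely internal to the paper. Note also that you only need the easy inclusion $V_1\subseteq\Jac(C)[1-\zeta_3]$ (from the $E_i$ being $\zeta_3$-fixed), whereas the paper uses the full equality from Schaefer.
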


\begin{proof}
Note that $V_1$ is generated by $[E_1], [E_2], [E_3]$ since $-E_1-E_2-E_3\sim E_4$. In fact, by \cite[Prop. 3.2]{Sch98}, there is a $G_\Q$-equivariant isomorphism
\begin{equation}\label{equation: Schaefer}
\Jac(C)[1-\zeta_3](\Qbar)\simeq V_1
\end{equation}
of $3$-dimensional $\F_3$-vector spaces. Hence it suffices to show that $[\overline D_1], [\overline D_2],[\overline D_3]$ are linearly independent. Suppose there were a nontrivial linear relation among $[\overline D_1], [\overline D_2],[\overline D_3]$. This would yield, by multiplication by $(1-\zeta_3)$, a nontrivial linear relation among $[E_1],[E_2],[E_3]$, but this is absurd.  
\end{proof}

\begin{proposition}\label{proposition: Lpolymod3}
Let $\{d_i\}_i$ denote the set of degrees of the irreducible factors of~$\tilde f$. Then:
\begin{enumerate}[i)]
\item For every $p$ in $\fS(C)$, we have
\[L_p(C,T) \equiv (1-T)^{-2}\prod_i (1-T^{d_i})^2 \pmod{3}\,.\]
\item For every $p$ in $\fI(C)$, we have
\[L_p(C,T) \equiv (1-T^2)^{-1} \prod_{i} (1-T^{d_i})(1-(2T)^{d_i}) \pmod{3}\,.\]
\end{enumerate}
\end{proposition}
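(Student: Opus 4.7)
The plan is to identify $L_p(C,T)\bmod 3$ with the characteristic polynomial of $\Frob_p$ acting on the $6$-dimensional $\F_3$-vector space $\Jac(C)[3](\Qbar)$ (coming from the $3$-adic Tate module), and then exploit the $G_\Q$-stable exact sequence
\[0\rightarrow V_1 \rightarrow \Jac(C)[3](\Qbar)\rightarrow V_2 \rightarrow 0\]
supplied by Lemma~\ref{lemma: 3torsion}. This factors $L_p(C,T)\equiv \det(1-\Frob_p T\,|\, V_1)\cdot \det(1-\Frob_p T\,|\,V_2)\pmod 3$, reducing the problem to computing the two factors separately.

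For the factor from $V_1$, the basis vectors $[E_i]$ transform under Galois exactly as the roots $\alpha_i$ do, so $\Frob_p$ permutes $[E_1],\dots,[E_4]$ with cycle lengths $\{d_i\}$ equal to the degrees of the irreducible factors of $\tilde f$. The full permutation representation on $\F_3^4$ has reversed characteristic polynomial $\prod_i(1-T^{d_i})$, and quotienting by the $G_\Q$-stable line spanned by $[E_1]+[E_2]+[E_3]+[E_4]$ divides this by $1-T$, yielding
\[\det(1-\Frob_p T\,|\,V_1)=\frac{\prod_i(1-T^{d_i})}{1-T}.\]

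For the factor from $V_2$, the key observation is that the Galois action on $V_2$ in the basis $[\overline D_1],[\overline D_2],[\overline D_3]$ is \emph{not} simply a permutation of these vectors, because the relation $(1-\zeta_3)D_i\sim E_i$ entangles Galois with the $\cO$-action. Since $[\zeta_3]$ is defined over $\Q(\zeta_3)$ but not $\Q$, every $g\in G_\Q$ satisfies $g\circ [\zeta_3] = [\zeta_3^{\chi(g)}]\circ g$, where $\chi\colon G_\Q\to (\Z/3\Z)^\times$ is the mod-$3$ cyclotomic character (equal to $+1$ on $\fS(C)$ and $-1$ on $\fI(C)$). Applying $g$ to $(1-\zeta_3)D_i \sim E_i$ gives
\[(1-\zeta_3^{\chi(g)})\cdot g[D_i]=[E_{\tau(i)}]\quad\text{in }\Jac(C)[3],\]
where $\tau$ is the permutation of the roots induced by $g$. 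A short calculation using $1-\zeta_3^{-1}=-\zeta_3^{-1}(1-\zeta_3)$ together with the fact that $\zeta_3$ acts trivially on $V_1 = \ker(1-\zeta_3)$ shows that $g[\overline D_i]=\chi(g)\cdot[\overline{D_{\tau(i)}}]$ in $V_2$. Thus, under the $\F_3$-linear identification $[\overline D_i]\leftrightarrow [E_i]$, the Frobenius action on $V_2$ equals $\chi(\Frob_p)$ times its action on $V_1$, so
\[\det(1-\Frob_p T\,|\,V_2)\equiv\frac{\prod_i(1-(\chi(\Frob_p)T)^{d_i})}{1-\chi(\Frob_p)T}\pmod 3.\]

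Multiplying the two factors yields both parts. For $p\in\fS(C)$, $\chi(\Frob_p)=1$ and the two factors coincide, giving the square in (i). For $p\in\fI(C)$, $\chi(\Frob_p)\equiv 2\pmod 3$, and the elementary identity $(1-T)(1-2T)\equiv 1-T^2\pmod 3$ in $\F_3[T]$ assembles the formula in (ii). The principal technical obstacle is the sign/character twist for the action on $V_2$; once the computation in $\cO/3\cO$ establishing $g[\overline D_i]=\chi(g)[\overline{D_{\tau(i)}}]$ is in hand, the remainder is routine polynomial bookkeeping.
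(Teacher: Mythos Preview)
Your proof is correct and follows essentially the same route as the paper's: both reduce $L_p(C,T)\bmod 3$ to the Frobenius action on $\Jac(C)[3]$, use the filtration $0\to V_1\to \Jac(C)[3]\to V_2\to 0$ from Lemma~\ref{lemma: 3torsion}, compute the permutation action on $V_1$ via the roots of $\tilde f$, and show that on $V_2$ the action is the same permutation twisted by the sign $\chi(\Frob_p)$; the paper treats the split and inert cases separately and packages the characteristic-polynomial computation into Lemma~\ref{lemma: auxcharpolys}, whereas you unify them through the cyclotomic character and do the polynomial bookkeeping inline. One small point to make explicit: your passage from $(1-\zeta_3^{-1})g[D_i]=[E_{\tau(i)}]$ to $g[\overline D_i]=-[\overline D_{\tau(i)}]$ ultimately needs that $\zeta_3$ acts trivially on $V_2$ (not just on $V_1$), which follows from $(1-\zeta_3)^2=-3\zeta_3=0$ on $\Jac(C)[3]$.
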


\begin{proof}
For any prime $p$ of good reduction, by taking $\ell=3$ in \eqref{equation: Lpolynomial}, we obtain
$$
L_p(C,T) \equiv \det(1-\Frob_p T \mid \Jac(C)[3](\Qbar)) \pmod{3}\,.
$$
By Lemma \ref{lemma: 3torsion}, we need only determine how $\Frob_p$ acts on the basis $[E_i]$ for $V_1$ and~$[\overline D_i]$ for $V_2$. Associated to $\Frob_p$, there is an element $\tau$ in the symmetric group on 4 letters such that $\Frob_p(\alpha_i)=\alpha_{\tau(i)}$. Then $\Frob_p([E_i])=[E_{\tau(i)}]$.
Note that the set of lengths of the cycles in the cycle decomposition of $\tau$ coincides with the set of degrees $\{d_i\}_i$.

Suppose that $p$ is in $\fS(C)$. In this case $\Frob_p(1-\zeta_3)=(1-\zeta_3)$, and hence, from the definition of the $[E_i]$ and $[ D_i]$, we find
$$
\Frob_p([E_i])=[E_{\tau(i)}]\,,\qquad (1-\zeta_3)\Frob_p([ D_i])=(1-\zeta_3)[ D_{\tau(i)}]\,.
$$
By \eqref{equation: Schaefer}, the second equality means that $\Frob_p([ D_i])$ and $[ D_{\tau(i)}]$ difer by an element of $V_1$. Hence $\Frob_p([ \overline D_i])=[ \overline D_{\tau(i)}]$. By Lemma \ref{lemma: auxcharpolys} below applied to $V_1$ and $V_2$, we deduce that 
$$
L_p(C,T) \equiv (1-T)^{-2}\prod_i (1-T^{d_i})^2 \pmod{3}\,.
$$
Suppose that $p$ is in $\fI(C)$. In this case, we have that $\Frob_p(1-\zeta_3) = 1-\zeta_3^2$, and thus we get
$$
(1-\zeta_3^2)\Frob_p([D_i])=\Frob_p((1-\zeta_3)[D_i])=\Frob_p([E_i])= [E_{\tau(i)}]=(1-\zeta_3)[D_{\tau(i)}].
$$
By \eqref{equation: Schaefer}, the above equality means that $(1+\zeta_3)\Frob_p([D_i])$ and $[D_{\tau(i)}]$ differ by an element of $V_1$. Since $(1-\zeta_3)\Frob_p([D_i])$ is an element of $V_1$, we get that $2\Frob_p([\overline D_i])=[\overline D_{\tau(i)}]$, or equivalently that $\Frob_p([\overline D_i])=2[\overline D_{\tau(i)}]$. By Lemma~\ref{lemma: auxcharpolys} below applied to $V_1$ and $V_2$, we deduce that
$$
L_p(C,T) \equiv (1-T)^{-1}(1-2T)^{-1} \prod_{i} (1-T^{d_i})(1-(2T)^{d_i}) \pmod{3}\,,
$$
which completes the proof of the proposition.
\end{proof}

The following lemma is used in the proof of the above proposition.

\begin{lemma}\label{lemma: auxcharpolys}
Let $k$ be a field, $V$ a $k$-vector space of dimension $n$, and $v_1,\dots,v_n$ a basis of $V$. Define $v_{n+1}:=-v_1+\dots - v_n$. Let $f\colon V\rightarrow V$ be the $k$-linear automorphism defined by
$$
f(v_i)=av_{\tau(i)}\,,\qquad \text{ for $i=1,\dots,n$,}
$$
where $a\in k$ and $\tau$ is an element of the symmetric group in $n+1$ letters. Let $\{d_i\}_i$ be the set of lengths of the cycles in the cycle decomposition of $\tau$. Then the reversed characteristic polynomial of $f$ is
$$
\chi_{f,V}(T)=\frac{1}{1-aT}\prod_{i}(1-(aT)^{d_i})\,.
$$
\end{lemma}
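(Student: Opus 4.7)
The plan is to realize $V$ as a quotient of the naive $(n+1)$-dimensional permutation-type representation and then compute the characteristic polynomial on the larger space, where the cycle structure makes it immediate.

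Concretely, let $W := k^{n+1}$ with basis $e_1,\dots,e_{n+1}$, and define $f'\colon W\to W$ by $f'(e_i)=a e_{\tau(i)}$ for $i=1,\dots,n+1$. Consider the line $L:=\langle e_1+\dots+e_{n+1}\rangle$. Since $\tau$ is a permutation, $f'(e_1+\dots+e_{n+1})=a(e_1+\dots+e_{n+1})$, so $L$ is $f'$-stable and $f'|_L$ acts as the scalar $a$. The surjective linear map $\phi\colon W\to V$ given by $e_i\mapsto v_i$ (for $i=1,\dots,n+1$, using the definition of $v_{n+1}$) has kernel exactly $L$: an element $\sum c_i e_i$ lies in $\ker\phi$ iff $\sum_{i\leq n}(c_i-c_{n+1})v_i=0$, i.e.\ iff all the $c_i$ agree. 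Hence $\phi$ induces an isomorphism $W/L\simeq V$, and I would check that under this identification $f'$ descends to $f$. The only nontrivial check is at the index $n+1$: one has $f(v_{n+1})=-\sum_{i\leq n}a v_{\tau(i)}$, and using $\sum_{j\leq n+1}v_j=0$ this simplifies to $a v_{\tau(n+1)}$, which is exactly $\phi(f'(e_{n+1}))$.

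Next I would compute $\det(1-Tf'\mid W)$ directly from the cycle decomposition of $\tau$. For a cycle $(i_1\,i_2\,\cdots\,i_d)$, the invariant subspace $\langle e_{i_1},\dots,e_{i_d}\rangle$ satisfies $(f')^d(e_{i_j})=a^d e_{i_j}$, so the restriction of $f'$ has characteristic polynomial $T^d-a^d$, and hence reversed characteristic polynomial $1-(aT)^d$. Taking the product over all cycles gives
\[
\det(1-Tf'\mid W)=\prod_i\bigl(1-(aT)^{d_i}\bigr).
\]

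Finally, the short exact sequence $0\to L\to W\to V\to 0$ is $f'$-equivariant, so the reversed characteristic polynomial factors as $\det(1-Tf'\mid W)=\det(1-Tf'\mid L)\cdot\chi_{f,V}(T)=(1-aT)\cdot\chi_{f,V}(T)$. Dividing yields the claimed formula. I do not anticipate any real obstacle; the only point requiring care is verifying that $f$ is well-defined on $W/L$ and agrees with the map in the statement, which was addressed above.
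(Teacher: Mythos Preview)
Your argument is correct and follows essentially the same approach as the paper: both introduce the $(n+1)$-dimensional space $W$ with the permutation-type operator $g$ (your $f'$), identify $V$ with the quotient of $W$ by the diagonal line, and conclude by dividing $\det(1-Tg\mid W)=\prod_i(1-(aT)^{d_i})$ by the factor $1-aT$ coming from the kernel. Your write-up is somewhat more detailed than the paper's (you explicitly verify the equivariance at the index $n+1$ and spell out the cycle computation), but the idea is identical.
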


\begin{proof}
Let $W$ be a $k$-vector space of dimension $n+1$ with basis $w_1,\dots,w_{n+1}$. Define the $k$-linear maps 
$$
\Phi\colon W\rightarrow V\,,\qquad g\colon W\rightarrow W\,,
$$ 
by $\Phi(w_i)=v_i$ and $g(w_i)= aw_ {\tau(i)}$. It is clear that $\Phi$ is equivariant with respect to the actions of $g$ and $f$ on $W$ and $V$, respectively, and that the kernel $N$ of $\Phi$ is generated by $\sum_i w_i$.
Therefore $\chi_{f,V}(T)=\chi_{g,W}(T)/(1-aT)$, and the lemma follows.
\end{proof}

Recall the polynomial $\psi_f$ defined in \eqref{equation: defpsif}. Let $\tilde \psi_f$ denote its reduction modulo~$p$.

\begin{theorem}\label{theorem: inertLpolys}
Let $C$ be a Picard curve defined over $\Q$, and let $f,\psi_f\in \Z[x]$ be as in \eqref{equation: Picardcurve} and \eqref{equation: defpsif}, respectively. For every prime in $\fI(C)$, the data:
\begin{enumerate}[i)]
\item the Cartier--Manin matrix of $C$ at $p$,
\item the knowledge of $\tilde \psi_f$ having an $\F_p$-rational root or not, and 
\item the knowledge of $\tilde f\in \F_p[x]$ being irreducible or not
\end{enumerate}
uniquely determine the $L$-polynomial $L_p(C,T)$.
\end{theorem}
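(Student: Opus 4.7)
The plan is to use Lemma~\ref{lemma: pmod3eq2}, which reduces the problem from determining the entire polynomial $L_p(C,T)$ to determining the single integer $t_p$, subject to the bound $|t_p|\leq 2p$. Since $t_p$ ranges over an interval of length $4p$, any congruence information on $t_p$ modulo an integer strictly exceeding $4p$ will pin it down. The strategy is thus to recover $t_p$ modulo $p$, modulo $3$, and modulo $2$ from the three pieces of data $i), ii), iii)$, and then apply the Chinese remainder theorem to determine $t_p$ modulo $6p$, which suffices.

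First I would handle $t_p \pmod p$. From Lemma~\ref{lemma: decred} (or its original form in Section~\ref{section: first facts}) we have $L_p(C,T)\equiv g_p(C,T)\pmod p$, where $g_p(C,T)$ is the reversed characteristic polynomial of the Cartier--Manin matrix $A_p$ (with respect to the basis $\omegab$ from~\eqref{equation: basis}). Reading off the coefficient of $T^2$ on both sides of
\[
(1+pT^2)(1-t_pT^2+p^2T^4) \equiv g_p(C,T) \pmod p,
\]
we see that $t_p\equiv -[\text{coefficient of } T^2 \text{ in } g_p(C,T)] \pmod p$, so datum $i)$ determines $t_p$ modulo $p$.

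Next, datum $iii)$ combined with Lemma~\ref{lemma: fmodp} directly yields $t_p \pmod 3$: reducibility of $\tilde f$ gives $t_p\equiv 2\pmod 3$, irreducibility gives $t_p\equiv 1\pmod 3$. (Alternatively, Proposition~\ref{proposition: Lpolymod3}$ii)$ gives the full reduction of $L_p(C,T)$ mod~$3$ in terms of the factorization type of $\tilde f$; since the only two possibilities are that $\tilde f$ is irreducible or has an irreducible factor of degree at most $2$, the coarse dichotomy in $iii)$ suffices.) Finally, datum $ii)$ and the Theorem of the Appendix give $t_p \pmod 2$: the connection with the polynomial $\psi_f(x^3/2)$ generating the $2$-torsion field of $\Jac(C)$ translates the existence of an $\F_p$-rational root of $\tilde\psi_f$ into the parity of $t_p$ (again, in light of $L_p(C,T)$ mod~$2$ being controlled by $\Jac(C)[2]$ via the $\ell=2$ case of \eqref{equation: Lpolynomial}).

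With $t_p$ known modulo $p$, $2$, and $3$, the Chinese remainder theorem determines $t_p$ modulo $6p$. Since $|t_p|\leq 2p$ forces $t_p$ to lie in an interval of length $4p < 6p$, the integer $t_p$ is uniquely determined, and hence so is $L_p(C,T)$. There is no real obstacle beyond invoking the Appendix; the only care needed is verifying that the dichotomy provided by datum $iii)$ is genuinely sufficient to fix $t_p\pmod 3$, and that the Appendix's Theorem indeed delivers the parity of $t_p$ from condition $ii)$. Both of these are taken as inputs, so the remaining argument is a short CRT assembly.
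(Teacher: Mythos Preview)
Your proposal is correct and follows essentially the same approach as the paper's proof: reduce to determining $t_p$ via Lemma~\ref{lemma: pmod3eq2}, extract $t_p$ modulo $p$, $2$, and $3$ from data $i)$, $ii)$, $iii)$ respectively (using the congruence~\eqref{equation: expose22}, the Theorem in the Appendix, and Lemma~\ref{lemma: fmodp}), and conclude by the Chinese remainder theorem since $|t_p|\le 2p<6p/2$. Your write-up is in fact slightly more explicit than the paper's in spelling out how the coefficient of $T^2$ in $g_p(C,T)$ yields $t_p\bmod p$.
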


\begin{proof}
By Lemma \ref{lemma: pmod3eq2}, it suffices to determine $t_p$. Since $|t_p|\leq 2p$, it suffices to detemine $t_p$ modulo $6p$. But $i)$ determines $t_p$ modulo $p$; by the Theorem in the Appendix, $ii)$ determines $t_p$ modulo $2$; and both by Lemma~\ref{lemma: fmodp} or Proposition~\ref{proposition: Lpolymod3}, $iii)$ determines $t_p$ modulo $3$. 
\end{proof}

In Section \ref{section: algorithmic} the above theorem will be used to describe a (deterministic) algorithm to compute the $L_p(C,T)$ for primes $p$ in $\fI(C)$. Let $\Jac(C_p)$ denote the Jacobian variety of $C_p$, and let $n$ (resp. $\lambda$) denote the order (resp. exponent) of the group of $\F_p$-rational points of $\Jac(C_p)$. Below we present a variant of the above theorem in which we replace condition $ii)$ by the knowledge of $\lambda$. 

\begin{theorem}\label{theorem: inertLpolysJacobian}
Let $p\geq 877$ be a prime in $\fI(C)$. The data:
\begin{enumerate}[i)] 
\item $t_p$ modulo $p$,
\item the knowledge of $\tilde f\in \F_p[x]$ being irreducible or not, and
\item the exponent $\lambda$ of $\Jac(C_p)(\F_p)$
\end{enumerate} 
uniquely determine the $L$-polynomial $L_p(C,T)$. 
\end{theorem}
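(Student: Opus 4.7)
Following the strategy of Theorem~\ref{theorem: inertLpolys}, by Lemma~\ref{lemma: pmod3eq2} it suffices to recover the integer $t_p$, subject to $|t_p|\leq 2p$. Datum~$(i)$ gives $t_p$ modulo~$p$; datum~$(ii)$, combined with Lemma~\ref{lemma: fmodp}, gives $t_p$ modulo~$3$; by the Chinese Remainder Theorem, $t_p$ is then determined modulo~$3p$. Since $|t_p|\leq 2p<3p$, at most two candidate values $t_1,t_2$ with $t_2-t_1=\pm 3p$ remain in the Weil interval, and the corresponding candidate orders $n_i:=(p+1)(p^2+1-t_i)=\#\Jac(C_p)(\F_p)$ differ by $n_2-n_1=\pm 3p(p+1)$.

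Datum~$(iii)$ is used to select the correct $n_i$ via divisibility: $\lambda$ divides the true order $n$. If $\lambda$ divides exactly one of $n_1,n_2$, we are done. Otherwise $\lambda$ divides $\gcd(n_1,n_2)$, which in turn divides $\gcd(n_1,3p(p+1))$, itself bounded by $3p(p+1)$. The theorem thus reduces to establishing the lower bound $\lambda>3p(p+1)$ for every $p\geq 877$ in $\fI(C)$.

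The main step, and the main obstacle, is this quantitative lower bound on $\lambda$. I would exploit the following structural ingredients: (a) over $\F_p$ the Jacobian $\Jac(C_p)$ is isogenous to $E\times A_0$, where $E$ is a supersingular elliptic curve of Frobenius polynomial $T^2+p$ with $\#E(\F_p)=p+1$, so that, up to a bounded isogeny kernel, $\Jac(C_p)(\F_p)$ contains a nearly cyclic subgroup of order $p+1$; (b) at each prime $\ell\nmid p-1$, the triviality of $\mu_\ell(\F_p)$ forces $\Jac(C_p)[\ell](\F_p)$ to be isotropic under the Weil pairing, and hence of $\F_\ell$-dimension at most $g=3$, so that the prime-to-$(p-1)$ part of $\Jac(C_p)(\F_p)$ has at most three cyclic summands; (c) the factor $A_0$ inherits a $\Z[\zeta_3]$-action upon base change to $\F_{p^2}$, constraining the structure of $A_0(\F_p)$ and hence that of $\Jac(C_p)(\F_p)$. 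Combining these with the coprimality $\gcd(p+1,p-1)\leq 2$ should force $\lambda$ past the threshold $3p(p+1)$ once $p\geq 877$, the numerical constant arising as the crossover where the estimates become tight. The delicate point is controlling the worst-case scenario where a large portion of the order lies in a $(p-1)$-primary component, on which the Weil-pairing constraint is weakest; resolving this regime likely requires a case analysis based on the factorizations of $p\pm 1$ and on the detailed module structure of $A_0(\F_p)$ over the order in $\Q[\pi_p]/(\pi_p^4-t_p\pi_p^2+p^2)$ generated by Frobenius.
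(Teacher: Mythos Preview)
Your reduction is correct up to the point where you must distinguish the two candidate orders $n_1,n_2$ using $\lambda$, but the claim that ``the theorem thus reduces to establishing the lower bound $\lambda>3p(p+1)$'' is false, and this is where your argument breaks down. Consider $t_p=-2p$: then $L_p(C,T)=(1+pT^2)^3$, the Jacobian is isogenous to the cube of a supersingular elliptic curve with $p+1$ rational points, the order is $n=(p+1)^3$, and the exponent $\lambda$ can be as small as $p+1$ (or $3(p+1)$), far below your threshold $3p(p+1)$. So the inequality you propose to prove simply does not hold in general, and no amount of Weil-pairing or $\Z[\zeta_3]$-module bookkeeping will rescue it.

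The paper does \emph{not} try to prove such a bound. Instead it assumes $\lambda$ divides both $n_1$ and $n_2$ (hence $\lambda\mid 3p(p+1)$) and shows that this hypothesis, together with the structure theorem $\Jac(C_p)(\F_p)\simeq\bigoplus_{j=1}^6\Z/m_j\Z$ with $m_j\mid m_{j+1}$ and $m_j\mid p-1$ for $j\le 3$, forces one of two very special configurations: either $p\nmid\lambda$, which after a size estimate (this is where $p\ge 877$ enters) pins down $n=(p+1)^3$ and $t_p=-2p$; or $p\mid\lambda$, which forces $s_p=p+1$, and then $p$ must be a Fermat prime and the two candidates are distinguished by $v_2(\lambda)$. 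In other words, the exponent $\lambda$ is not used as a size certificate but as a piece of structural data whose $p$-part and $2$-part resolve the remaining ambiguity. Your ingredients (b) and the coprimality $\gcd(p-1,p+1)\le 2$ are relevant, but they feed into this case analysis rather than into a global lower bound on $\lambda$.
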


\begin{proof}
Since $n=(1+p)(1+p^2-t_p)$, in order to determine $t_p$, it suffices to determine $n$.
By Lemmas \ref{lemma: pmod3eq2} and \ref{lemma: fmodp}, from $i)$ and $ii)$ we may assume given an integer $0 \leq s_p\leq 3p$ such that $t_p\equiv s_p \pmod {3p}$. In fact,~$s_p$ uniquely determines $t_p$ unless~$p \leq s_p\leq 2p$, and hence we will assume that the latter restriction holds from now on. In this case, the order $n$ is either
\begin{equation}\label{equation: twoorders}
n_1=(1+p)(1+p^2-s_p)\qquad \text{or} \qquad n_2=(1+p)(1+p^2-s_p+3p)\,.
\end{equation}
If $\lambda$ fails to divide one among $n_1$ and $n_2$, then the order $n$ is uniquely determined. Thus, we suppose from now on that $\lambda$ divides both $n_1$ and $n_2$, and hence that it divides their difference $3p(p+1)$. 
By \cite[Prop. 5.78]{CFADLNV05}, we have that
\begin{equation}\label{equation: Jacdec}
\Jac(C_p)(\F_p)\simeq \bigoplus_{j=1}^6 \Z/m_j\Z\,,
\end{equation}
where the integers $m_j$ satisfy $m_j\mid m_{j+1}$ for $1\leq j \leq 5$, $m_j\mid p-1$ for $1\leq j\leq 3$, and $m_6=\lambda$. 

Suppose first that $p$ does not divide $m_6$. Then $m_6$ must divide $3(1+p)$. Since $p\equiv 2 \pmod 3$ and for $1\leq j\leq 3$ we have that $m_j$ divides $p-1$ and $m_6$, this implies that $m_j\mid 2$ for $1\leq j\leq 3$. For $4\leq j\leq 6$, let us write $m_j=3(1+p)/c_j$ for some integer $c_j$. From \eqref{equation: twoorders} and the bounds on $s_p$, we obtain 
$$
(p+1)(p-1)^2\leq \#\Jac(C_p)(\F_p) \leq (p+1)^3\,.
$$
This implies
$$
1\leq\frac{c_4c_5c_6}{27 m_1m_2m_3} \leq \left(\frac{p+1}{p-1}\right)^2=:B(p)\,.
$$
A straightforward computation shows that $B(p)<1+2^{-3}\cdot 3^{-3}$ under the assumption of the statement that $p\geq 877$. Since the denominator of the central term of the above inequality is bounded by $2^3\cdot 3^3$, we deduce that $c_4c_5c_6=27m_1m_2m_3$, or equivalently that $n=(1+p)^3$, $s_p=p$, and $t_p=-2p$.

Suppose next that $p$ divides $m_6$. Then \eqref{equation: twoorders} implies that $s_p=p+1$. Therefore 
\begin{equation}\label{equation: n1n2mpdiv6}
n_1=(1+p)p(p-1)\qquad\text{and}\qquad n_2=(1+p)p(p+2)\,.
\end{equation}
We claim that this can only occur if $p$ is a Fermat prime. Indeed, suppose the contrary for the sake of contradiction. Then there is an odd prime $\ell$ dividing $p-1$. Note that such an $\ell$ does not divide $(1+p)p$. As $p\equiv 2 \pmod 3$, in particular we have that $\ell\not =3$ and thus $\ell$ does not divide $n_2$. Let $\ell'$ be a prime dividing $p+2$. Since $p\equiv 2 \pmod 3$, necessarily $\ell'\geq 5$. Note that such an $\ell'$ does not divide~$n_1$. By Lagrange's theorem, exactly one among $\ell$ and $\ell'$ divides $\lambda$. But this is a contradiction with the fact that $\lambda$ divides both~$n_1$ and~$n_2$. Thus there exists an integer $r$ such that $p=2^r+1$. If $r\geq 6$ (which is clearly satisfied by assumption), we have $v_2(n_1)\geq 7$, where $v_2$ denotes the $2$-adic valuation. Hence $n=n_1$ implies that $v_2(m_6)\geq 2$. On the other hand, we have that $v_2(m_6)=1$ if $n=n_2$.  
\end{proof}

In \S 4.1, the above theorem will be used to sketch a randomized algorithm of Las Vegas type with the same complexity as the deterministic one to compute the $L_p(C,T)$ for $p$ in $\fI(C)$.

\subsection{Interlude on a theorem of Sawin}\label{section: Sawin}

Throughout this section, let $A$ be an abelian variety defined over a number field $k$ and of dimension~$g$, let $E$ be a number field of degree $e$, and suppose that there exists a $\Q$-algebra homomorphism
$$
E\hookrightarrow \End(A)\otimes \Q\,,
$$
where $\End(A)$ denotes the ring of endomorphisms of $A$ defined over $k$.

By \cite[Thm. 1]{Fit20}, if $g=3$ and $E$ is imaginary quadratic, there exists a positive density set of primes of $k$ of ordinary good reduction for $A$. In Section~\ref{section: densityquestions}, we will  refine this result in the particular case that $A$ is the Jacobian of a \emph{generic} Picard curve~$C$. More precisely, we will show that every prime in $\fS(C)$ outside a density $0$ set is ordinary for $C$. 

The main input in the proof of \cite[Thm.~1]{Fit20} is a result due to Ogus (see for example \cite[Prop. 8, Prop. 10]{Fit20}), which allows to prove the existence of a positive density of ordinary primes by studying the $p$-divisibility of the trace of a certain Galois representation. In order to obtain the sought refinement of \cite[Thm.~1]{Fit20}, we will take the powerful approach of \cite{Saw16}, which by a finer argument permits to in fact compute the exact value of this density. In this section, we present a mild generalization of \cite[Thm.~1]{Saw16}. It is this generalization that will be employed in Section \ref{section: densityquestions}. 

Let $\ell$ be a rational prime, $V_\ell(A)$ denote the rational $\ell$-adic Tate module of~$A$, and  
$$
\varrho_{A,\ell}\colon G_k\rightarrow \Aut(V_\ell(A))
$$
be the associated $\ell$-adic representation. Our first task will be to define some spaces $V_\lambda(A)$, which are analogues of the spaces $H_\lambda^1(\Cbar)$ introduced in \eqref{equation: spacefirst} in our current more general setting. 
From now on, suppose that $\ell$ is totally split in $E$ so that $E_\lambda \simeq \Q_\ell$. For every prime $\lambda$ of $E$ lying above~$\ell$, let $E_\lambda$ be the completion of~$E$ at~$\lambda$, and let $V_\lambda(A)$ denote the tensor product $V_\ell(A)\otimes_{E\otimes \Q_\ell,\sigma_\lambda} E_\lambda$ taken with respect to the $\Q_\ell$-algebra map
$$
\sigma:=\sigma_\lambda \colon E\otimes \Q_\ell\simeq \bigoplus_{\lambda'|\ell}E_{\lambda'}\xrightarrow{\pr_\lambda} \Q_\ell\,,
$$
where $\pr_\lambda$ denotes the projection from the $E_\lambda$-component. Denote by $G_k$ the absolute Galois group of $k$. By letting $G_k$ act naturally on $V_\ell(A)$ and trivially on $E_\lambda$, one obtains a continuous representation
$$
\varrho_{A,\lambda}\colon G_k\rightarrow \Aut(V_\lambda(A))\simeq \GL_{2g/e}(\Q_\ell)
$$ 
unramified outside a finite set $S$ of primes of $k$ and which is integral and of weight~1 (by \cite[Thm. 2.11]{Rib76}). By integral, we mean that the characteristic polynomial of $\varrho_{A,\lambda}(\Frob_\p)$ has coefficients in the ring of integers $\cO_E$ of $E$ for every prime~$\p$ of $k$ outside~$S$. After fixing a polarization on $A$, the image $\Gamma$ of $\varrho_{A,\ell}$ sits inside $\GSp_{2g}(\Q_\ell)$. Let~$G$ denote the Zariski closure of $\Gamma$ inside $\GSp_{2g}$ (seen as an algebraic group over~$\Q_\ell$).
Let~$\Gamma_\lambda$ denote the image of $\varrho_{A,\lambda}$ inside $\GL_{2g/e}(\Q_\ell)$ and let $G_\lambda$ denote the Zariski closure of $\Gamma_\lambda$ inside $\GL_{2g/e}$. The isomorphism 
$$
V_\ell(A)\simeq \bigoplus_{\lambda} V_\lambda(A)
$$
of $\Q_\ell[G_k]$-modules induces a monomorphism $G\hookrightarrow \prod_{\lambda} G_\lambda$. 
Composing this injection with the projection to the $\lambda$-component yields a representation of $G$ inside $\GL_{2g/e}(\Q_\ell)$ that we will denote by $W_\lambda$. Let $\chi$ denote the restriction to $G$ of the similitude character of $\GSp_{2g}$. Let $w_\lambda$ denote the number of connected components of $G$ on which the trace of the representation $\wedge^2 W_\lambda \otimes \chi^{-1}$ is identically equal to a constant function. By $\Nm(\cdot)$ we denote both the absolute norm function on ideals of a given number field, and the absolute norm function on algebraic numbers. For a prime $\p$ outside $S$, let us denote by $b_{\p,\lambda}$ the trace\footnote{In the particular case that $A$ is the Jacobian of a Picard curve and $E=\Q(\zeta_3)$, this coincides with the definition of $b_{\p,\lambda}$ given in Section \ref{section: splitcase}.} $\Tr(\wedge^2\varrho_{A,\lambda}(\Frob_\p))$. Note that if $\wedge^2 W_\lambda \otimes \chi^{-1}$ is identically equal to the constant function $t\in \Q_\ell$ on a connected component of $G$, then there exist at least two primes $\p$ and $\q$ of $k$ such that
$$
t=\frac{b_{\p,\lambda}}{\Nm(\p)}=\frac{b_{\q,\lambda}}{\Nm(\q)}\,.
$$
This implies that $t$ in fact belongs to $\cO_E\subseteq \Q_\ell$. We will denote by $p$ the residue characteristic of the prime $\p$.

\begin{theorem}[After Sawin]\label{theorem: Sawin}
Suppose that $E$ is either $\Q$ or an imaginary quadratic field. The density of the set of primes $\p$ of $k$ outside $S$ such that $b_{\p,\lambda}$ is divisible by $p$ equals~$w_\lambda$ divided by the number of connected components of $G$.
\end{theorem}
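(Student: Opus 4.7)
The plan is to follow Sawin's proof of \cite[Thm.~1]{Saw16} with modest adaptations to handle the twist by the $\lambda$-adic projection and the similitude character. The first step is a reinterpretation of the divisibility condition. Since the similitude character $\chi$ of $\GSp_{2g}$ restricts on $\Gamma$ to the cyclotomic character, one has $\chi(\Frob_\p) = \Nm(\p)$ for every $\p$ outside $S$, and therefore the regular function
\[
\phi := \Tr(\wedge^2 W_\lambda \otimes \chi^{-1})
\]
on $G$ satisfies $\phi(\Frob_\p) = b_{\p,\lambda}/\Nm(\p)$. Since $\Nm(\p)$ is a positive power of $p$, the condition $p \mid b_{\p,\lambda}$ becomes equivalent to $\phi(\Frob_\p)$ being integral at the place of $E$ singled out by $v_p$ and reducing to zero there, which in turn amounts to $\phi(\Frob_\p)\equiv 0$ modulo a prime of $\cO_E$ above $p$.

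Next, I would decompose $G$ into its connected components $G^\circ_1,\dots,G^\circ_c$ with $c = |\pi_0(G)|$. A standard application of Chebotarev to the finite quotient $\pi_0(G)$ shows that the Frobenius elements $\Frob_\p$ equidistribute among these components, each receiving density $1/c$. On each component $G^\circ_i$ the function $\phi$ is a regular function; the central dichotomy is whether it is constant or not. If $\phi|_{G^\circ_i} \equiv t_i$ is constant, the discussion preceding the theorem statement gives $t_i \in \cO_E$, and then for every $\p$ with $\Frob_\p \in G^\circ_i$ one has $b_{\p,\lambda} = t_i \cdot \Nm(\p)$, which is automatically divisible by $p$. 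Summed over the $w_\lambda$ constant components this accounts for a contribution of $w_\lambda/c$ to the density, matching the claim.

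The remaining task, which is the heart of the argument, is to show that for every non-constant component $G^\circ_i$ the density of $\p$ with $\phi(\Frob_\p)$ reducing to zero modulo $p$ is zero. Here one invokes the main technical input of \cite{Saw16}: since the vanishing locus of $\phi$ in the irreducible component $G^\circ_i$ is a proper closed subvariety, a large-sieve argument combining Chebotarev with control on how often Frobenius lands in prescribed subschemes modulo varying auxiliary primes $\ell'\neq p$ forces this density to vanish. The main obstacle is verifying that the monodromy group $G$, viewed through the representation $\wedge^2 W_\lambda \otimes \chi^{-1}$, still satisfies the group-theoretic hypotheses of Sawin's sieve---essentially, that after removing the scalar part captured by $\chi$ the action is semisimple and the zero locus of $\phi$ has controlled dimension. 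The restriction that $E$ is either $\Q$ or imaginary quadratic enters precisely to ensure that the factorization $G\hookrightarrow\prod_\lambda G_\lambda$ preserves these properties on each $\lambda$-factor, after which Sawin's argument transfers verbatim.
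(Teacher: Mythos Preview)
Your decomposition into connected components and your handling of the constant components are fine, but the treatment of the non-constant components has a genuine gap, and it stems from a misreading of both the divisibility condition and the role of the hypothesis on~$E$.

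First, the reformulation is not right. For a degree-one prime $\p$ one has $\phi(\Frob_\p)=b_{\p,\lambda}/p$, and the condition $p\mid b_{\p,\lambda}$ is equivalent to $\phi(\Frob_\p)\in\cO_E$, \emph{not} to $\phi(\Frob_\p)\equiv 0$ modulo a prime above~$p$. (If $b_{\p,\lambda}=7p$, then $\phi(\Frob_\p)=7$, which certainly does not reduce to zero modulo~$p$.) So the condition you are trying to sieve for is not the correct one.

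Second, and more importantly, Sawin's argument does not proceed by a large sieve over auxiliary primes $\ell'$. The key observation---which you are missing---is that the Weil bound gives $\bigl|\Nm(b_{\p,\lambda}/p)\bigr|\le\binom{2g/e}{2}^2$, so whenever $p\mid b_{\p,\lambda}$ the quotient $t:=b_{\p,\lambda}/p$ lies in the set $T$ of elements of $\cO_E$ of bounded norm. The hypothesis that $E$ is $\Q$ or imaginary quadratic is used \emph{exactly here}, to guarantee that $T$ is finite (in a field with infinitely many units this would fail). Once $T$ is finite, the infinitely many congruence conditions ``$p\mid b_{\p,\lambda}$'' collapse to the single algebraic condition $\Frob_\p\in Z_\lambda:=\bigcup_{t\in T}\{\phi=t\}$. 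On a component where $\phi$ is non-constant, each level set $\{\phi=t\}$ is a proper closed subvariety and hence has Haar measure zero in~$\Gamma$; ordinary $\ell$-adic Chebotarev then gives density zero. No sieve is needed.

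Your explanation that the hypothesis on $E$ is there ``to ensure that the factorization $G\hookrightarrow\prod_\lambda G_\lambda$ preserves these properties'' is therefore misplaced; that factorization is automatic and has nothing to do with the unit group of~$\cO_E$.
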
 

\begin{proof}
Let $T$ denote the set of elements $t$ in $\cO_E$ such that 
$|\Nm(t)|\leq \binom{2g/e}{2}^2$. For any real number $C\geq 0$ the set of elements~$a$ of~$\cO_E$ whose norm satisfies $|\Nm(a)|\leq C$ is finite. Indeed, if $E=\Q$ this is obvious, and if~$E$ is an imaginary quadratic field, then this follows from the fact that both the number of units in~$\cO_E$ and the number of ideals of absolute norm $\leq C$ are finite. Therefore the set~$T$ is finite. Since $\wedge^2\varrho_{A,\lambda}$ is of weight~$2$, for every prime $\p$ outside $S$ we have
$$
\left|\Nm\left(\frac{b_{\p,\lambda}}{p}\right)\right|\leq \dim(\wedge^2W_\lambda)^2=\binom{2g/e}{2}^2\,,
$$
and thus if $b_{\p,\lambda}=pt$, for some $t\in \cO_E$ and some $\p$ outside $S$, then $t\in T$. Let~$Z_\lambda^t$ denote the closed subset of $G$ on which $\wedge^2W_\lambda \otimes \chi^{-1}$ is identically equal to the constant function $t$. Let $Z_\lambda$ denote the finite union
$$
Z_\lambda:=\bigcup_{t\in T} Z_\lambda^t\,.
$$ 
Let $\mu$ denote the Haar measure of $\Gamma$, normalized so that it has total mass~$1$. Then exactly as in the proof of \cite[Thm. 1]{Saw16} one shows that $\mu(Z_\lambda\cap \Gamma)$ is the sought for density and that $\mu(Z_\lambda\cap \Gamma)$ equals $w_\lambda$ divided by the number of connected components of $G$.
\end{proof}

\subsection{Ordinary primes for generic Picard curves}\label{section: densityquestions}

Resume the notations from Sections \ref{section: splitcase} and \ref{section: inertcase}. In particular, let $C$ be a Picard curve defined over $\Q$. 
 
\begin{corollary}\label{corollary: genericordinary}
Suppose that $C$ is a generic Picard curve. Then, every prime~$p$ in $\fS(C)$ outside a density $0$ set is ordinary for $C$. 
\end{corollary}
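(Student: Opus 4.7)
The plan is to apply Theorem~\ref{theorem: Sawin} after rephrasing ordinarity in terms of $p$-divisibility of the coefficient $b_{\p,\lambda}$. By part~(iv) of Lemma~\ref{lemma: bpord} combined with Remark~\ref{remark: analogue}, a prime $p\in \fS(C)$ fails to be ordinary for $C$ precisely when $b_{\p,\lambda}$ is divisible by~$p$, where $\lambda$ is any prime of~$\cO$ above a totally split auxiliary rational prime $\ell\neq p$. It therefore suffices to prove that the primes~$\p$ of $\Q(\zeta_3)$ above primes of $\fS(C)$ for which $b_{\p,\lambda}$ is divisible by the residue characteristic form a set of density zero.

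With $A=\Jac(C)_{\Q(\zeta_3)}$ taken over $k=\Q(\zeta_3)$ and $E=\Q(\zeta_3)$ acting through \eqref{equation: ring homomorphism}, Theorem~\ref{theorem: Sawin} expresses this density as $w_\lambda/\#\pi_0(G)$, so the task reduces to showing $w_\lambda=0$. Equivalently, I must show that the trace of $\wedge^2 W_\lambda\otimes \chi^{-1}$ is not identically constant on any connected component of the algebraic monodromy group~$G$.

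To verify this constancy test I would identify $G$ using the monodromy analysis already present in~\cite{Fit20}: the genericity of $C$, together with the Hodge decomposition of $H^1(\Cbar,\Q)$ (whose $\zeta_3$- and $\zeta_3^{2}$-eigenparts of $H^{1,0}$ have dimensions $2$ and $1$), forces the Mumford--Tate group of $\Jac(C)$ to be an inner form of $\GU(2,1)$; restricted to the $\lambda$-component, $W_\lambda$ is the standard three-dimensional representation of a subgroup of $\GL_3$ of full Lie algebra, with $\chi$ restricting to the determinant. For $g\in \GL_3$ with eigenvalues $\alpha_1,\alpha_2,\alpha_3$ one then computes
$$
\Tr\bigl((\wedge^2 W_\lambda\otimes \chi^{-1})(g)\bigr)=\alpha_1^{-1}+\alpha_2^{-1}+\alpha_3^{-1}\,,
$$
which, evaluated at scalar matrices $t\cdot \mathrm{Id}$ (which lie in every connected component via the similitude factor), yields $3/t$, manifestly non-constant on the identity component. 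Since $\wedge^2 W_\lambda\otimes \chi^{-1}$ is irreducible on $G^0$, a standard Schur/Burnside argument (applied to the linear functional $M\mapsto \Tr(\pi(g_0)\cdot M)$ on the span of $\pi(G^0)$, which is all of $\mathrm{M}_3$) propagates non-constancy to every translate $g_0\cdot G^0$, yielding $w_\lambda=0$.

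The main obstacle is the third step: extracting from \cite{Fit20} enough information on the algebraic monodromy group $G$ and its component group to apply the constancy test uniformly across components. The genericity hypothesis is exactly what guarantees that the Mumford--Tate group is as large as possible, so that the trace function retains room to be non-constant on every component; without it, the presence of further endomorphisms could shrink $G$ and, on some component, force $\wedge^2 W_\lambda\otimes \chi^{-1}$ to be constant.
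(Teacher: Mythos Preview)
Your overall strategy coincides with the paper's: translate ordinarity into $p\mid b_{\p,\lambda}$ via Lemma~\ref{lemma: bpord} and Remark~\ref{remark: analogue}, then apply Theorem~\ref{theorem: Sawin} and show $w_\lambda=0$. That part is fine.

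Where you diverge is in the verification that $w_\lambda=0$. The paper does not analyze multiple components at all: it invokes \cite[Prop.~2.17]{FKRS12} (not \cite{Fit20}) to conclude that for a \emph{generic} Picard curve the algebraic monodromy group $G$ over $k=\Q(\zeta_3)$ is already \emph{connected} (indeed, by Upton's result the $\ell$-adic image is all of $\GL_3(\Z_\ell)$ for almost all split $\ell$). Once $G$ is connected there is a single component, and non-constancy of $\Tr(\wedge^2 W_\lambda\otimes\chi^{-1})$ is immediate. Your detour through a component-by-component analysis is therefore unnecessary, and the ``main obstacle'' you flag evaporates once you use the correct input.

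Your workaround also has a gap as written. The Burnside step asserts that because $\pi(G^0)$ spans $\M_3$, the linear functional $M\mapsto \Tr(\pi(g_0)M)$ cannot be constant on $\pi(G^0)$. But a nonzero linear functional can certainly be constant (and nonzero) on a set whose linear span is the whole space, provided that set lies in an affine hyperplane off the origin; nothing you have said rules this out. One can repair the argument by exploiting that $\pi(G^0)$ contains a one-parameter family of central scalars on which the trace already varies (and then treating separately the case $\Tr(\pi(g_0))=0$), or, as the paper does in the companion Corollary~\ref{corollary: genericothercomp}, by exhibiting two explicit elements in the relevant coset with different traces. Either way, the clean route is simply to note that $G$ is connected and skip the coset analysis entirely.
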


\begin{proof}
Let $k=E=\Q(\zeta_3)$ and $A=\Jac(C)_k$. Denote by $G$ the Zariski closure of the image of the $\ell$-adic representation attached to $A$. By \cite[Prop. 2.17]{FKRS12}, when $C$ is generic, $G$ is connected (in fact, Upton \cite{Upt09} has shown that if $C$ is generic, then for all but finitely many $\ell\equiv 1 \pmod 3$ one has $\varrho_{A,\ell}(G_k)\simeq \GL_3(\Z_\ell)$). Since the trace of $\wedge^2 W_\lambda \otimes \chi^{-1}$ is not constant on $G$, the set of primes $p$ in $\fS(C)$ for which $b_{\p,\lambda}$ is divisible by $p$ has density $0$. But, by Lemma \ref{lemma: bpord} and Remark~\ref{remark: analogue}, a prime $p$ in $\fS(C)$ is ordinary for $C$ if and only if $b_{\p,\lambda}$ is not divisible by $p$. 
\end{proof}

Combining the above corollary with Corollary \ref{corollary: CartierManin}, we obtain the following result.

\begin{corollary}
Suppose that $C$ is a generic Picard curve. Then, for every prime~$p$ in $\fS(C)$ outside a density 0 set, the Cartier--Manin matrix of $C$ at $p$ uniquely determines the $L$-polynomial $L_p(C,T)$. 
\end{corollary}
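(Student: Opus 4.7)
The plan is to simply splice together the two preceding corollaries, so the proof amounts to verifying that their hypotheses overlap on a set whose complement in $\fS(C)$ has density zero. First I would let $\fSst(C)\subseteq \fS(C)$ denote the subset of ordinary primes, and write the "bad" set as
\[
B := (\fS(C)\setminus \fSst(C))\cup \{p\in \fS(C) : p<53\}.
\]
The finite tail $\{p<53\}$ contributes density zero trivially, and Corollary \ref{corollary: genericordinary}, applied to the generic Picard curve $C$, gives that $\fS(C)\setminus \fSst(C)$ has density zero. A finite union of density-zero sets is density zero, so $B$ has density zero inside $\fS(C)$ (and hence inside the set of all rational primes).

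Next, for any prime $p\in \fS(C)\setminus B$, by construction $p\geq 53$ and $p$ is ordinary for $C$. Corollary \ref{corollary: CartierManin} then applies and shows that the Cartier--Manin matrix $A_p$ uniquely determines $L_p(C,T)$. This finishes the argument, with no obstacle to speak of: the content of the result lies entirely in the two corollaries being combined, namely the $p$-adic reconstruction statement of Section \ref{section: splitcase} and the density statement deduced from the generalized form of Sawin's theorem in Section \ref{section: Sawin}. One minor point to state explicitly is that "density 0 set" here refers to density within the set of all rational primes (equivalently, within $\fS(C)$, since the latter has density $1/2$ by Chebotarev), so the two corollaries are compatible in their notions of density.
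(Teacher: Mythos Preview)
Your proposal is correct and is essentially the same approach as the paper's: the paper simply states that this corollary is obtained by combining Corollary~\ref{corollary: CartierManin} with Corollary~\ref{corollary: genericordinary}, and you have merely spelled out the obvious details of that combination (including the trivial handling of the finitely many primes $p<53$).
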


By Lemma \ref{lemma: pmod3eq2}, no prime in $\fI(C)$ is ordinary. Let $b_p$ denote the coefficient of~$T^2$ in $L_p(C,T)$. For the sake of completeness, we study the density of the set of primes in $\fI(C)$ for which $b_p\equiv 0 \pmod p$.

\begin{corollary}\label{corollary: genericothercomp}
Suppose that $C$ is a generic Picard curve. Then, the set of primes~$p$ in $\fI(C)$ such that $b_p\equiv 0 \pmod p$ has density $0$. 
\end{corollary}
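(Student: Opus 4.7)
The plan is to apply Theorem \ref{theorem: Sawin} with $k=E=\Q$ and $A=\Jac(C)$. With this choice, $V_\lambda(A)$ is the full rational Tate module $V_\ell(A)$ (of dimension $6$), and for every good prime $p$ one has $b_{p,\lambda}=b_p$, since the coefficient of $T^2$ in $L_p(C,T)$ equals the trace of $\wedge^2\Frob_p$ acting on $V_\ell(A)$. Sawin's theorem then asserts that the density of \emph{all} rational primes $p$ with $p\mid b_p$ equals $w_\lambda/\#(G/G^0)$, where $G\subseteq \GSp_6$ is the Zariski closure of $\varrho_{A,\ell}(G_\Q)$. Since $\fI(C)$ is a subset of the set of all rational primes, it suffices to show $w_\lambda=0$.

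Write $V_\ell(A)=V_+\oplus V_-$ for the eigenspace decomposition under $\cO\otimes \Q_\ell\simeq \Q_\ell\oplus \Q_\ell$. For a generic Picard curve, the CM action of $\Z[\zeta_3]$ on $\Jac(C)_\Qbar$ is defined over $\Q(\zeta_3)$ but not over $\Q$, so $[G:G^0]=2$, with $G^0$ equal to the Zariski closure of $\varrho_{A,\ell}(G_{\Q(\zeta_3)})$; by \cite[Prop.~2.17]{FKRS12} together with \cite{Upt09}, the identity component $G^0$ acts on $V_+$ through a group containing $\GL_3$, while the nonidentity component $G^1:=G\setminus G^0$ interchanges $V_+$ and $V_-$. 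On $G^0$, the argument used in Corollary~\ref{corollary: genericordinary} shows that $\Tr(\wedge^2 W_\lambda\otimes \chi^{-1})$ is non-constant, so this component contributes $0$ to $w_\lambda$.

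The crux is to establish the same non-constancy on $G^1$. For $g\in G^1$, the block antidiagonal structure forces $\Tr(g\mid V_\ell(A))=0$, and then the identity $\Tr(\wedge^2 g)=\tfrac{1}{2}(\Tr(g)^2-\Tr(g^2))$ together with $\Tr(g^2\mid V_+)=\Tr(g^2\mid V_-)$ yields
\[
\Tr(\wedge^2 g\mid V_\ell(A))=-\Tr(g^2\mid V_+)\,.
\]
Fixing $g_1\in G^1$ and writing an arbitrary element as $g_1h$ with $h\in G^0$, the function on $G^1$ becomes $f(g_1h)=-\Tr\bigl(g_1^2\,\tau_1(h)\,h\mid V_+\bigr)/\bigl(\chi(g_1)\chi(h)\bigr)$, where $\tau_1(h)=g_1hg_1^{-1}$; on the $\GL_3$-part of $G^0$ the map $\tau_1$ is the CM involution $h\mapsto (h^T)^{-1}\det(h)$. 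A direct first-order calculation at $h=I$ shows that the derivative of $f$ in a generic antisymmetric tangent direction is nonzero, so $f$ is non-constant on $G^1$. Hence $w_\lambda=0$, and the corollary follows.

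The main obstacle will be pinning down the involution $\tau_1$ precisely enough to justify the derivative computation; this requires combining Upton's integral result with a careful analysis of how the nontrivial element of $\Gal(\Q(\zeta_3)/\Q)$ acts on $G^0$. All other steps are routine adaptations of Corollary~\ref{corollary: genericordinary} to the component $G^1$.
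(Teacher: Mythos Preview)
Your setup---applying Theorem~\ref{theorem: Sawin} with $k=E=\Q$, $A=\Jac(C)$, $\lambda=\ell$, and reducing to non-constancy of $\Tr(\wedge^2 W_\ell\otimes\chi^{-1})$ on connected components---is exactly the paper's. Two remarks, one minor and one substantive.

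First, you prove more than necessary. Primes in $\fI(C)$ have $\Frob_p$ lying in the nonidentity component $G^1$, so non-constancy on $G^1$ alone already forces the relevant level sets $Z_\lambda^t\cap G^1$ to be proper closed subvarieties of measure zero; the $G^0$ case is irrelevant to the statement. Your invocation of Corollary~\ref{corollary: genericordinary} for $G^0$ is also not quite right as written: there the representation $W_\lambda$ is the $3$-dimensional eigenspace $V_+$, whereas here $W_\ell=V_+\oplus V_-$ is $6$-dimensional, so the two trace functions on $G^0$ differ by the extra terms $\Tr(\wedge^2 V_-)+\Tr(V_+\otimes V_-)$.

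Second, and this is the genuine gap, your argument on $G^1$ is not completed: you explicitly flag the determination of $\tau_1$ and the derivative computation as an unresolved ``main obstacle''. The paper avoids this entirely. Using Sawin's observation that non-constancy can be checked on the Sato--Tate group, it simply exhibits two explicit elements of the coset $J\cdot\ST(A)^0$, namely $J$ and $J\cdot\mathrm{diag}(u,\bar u)$ with $u$ the $3\times 3$ cyclic permutation matrix, and computes $\Tr(\wedge^2 W)$ to be $3$ and $0$ respectively. Your own identity $\Tr(\wedge^2 g)=-\Tr(g^2\mid V_+)$ for $g\in G^1$ is correct and reproduces these values immediately ($J^2\mid_{V_+}=-I_3$ gives $3$; the permutation example gives $-\Tr(u^2)=0$), so you were one concrete example away from a complete proof. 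The derivative route is unnecessary.
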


\begin{proof}
Since $C$ is generic, the Zariski closure $G$ of the image of $\varrho_{A,\ell}$ consists of two connected components $G^0$ and $G^1$, which are the respective Zariski closures of the images of $G_{\Q(\zeta_3)}$ and $G_\Q- G_{\Q(\zeta_3)}$. By Theorem \ref{theorem: Sawin} applied to $k=E=\Q$, $A=\Jac(C)$, and $\lambda=\ell$, it suffices to show that the trace of $\wedge^2W_\ell\otimes \chi$ on $G^1$ is not a constant function. 

Let $\ST(A)$ denote the Sato--Tate group of $A$. It is a compact real Lie subgroup of the unitary symplectic group $\USp(6)$ of degree $6$. Let $\U(3)$ denote the unitary group of degree $3$ in its standard representation, and let $I_3\in \U(3)$ denote the identity matrix. Let $W$ denote the standard representation of $\USp(6)$. By \cite[\S3.3.1]{FKS19}, we have that $\ST(A)=\langle \ST(A)^0, J\rangle$, where
$$
\ST(A)^0=\left\{ 
\begin{pmatrix}
u & 0\\
0 & \overline u
\end{pmatrix}: u\in \U(3)
\right\}
\quad
\text{and}
\quad
J=
\begin{pmatrix}
0 & I_3\\
-I_3 & 0
\end{pmatrix}
\,.
$$ 
As argued in the proof of \cite[Thm. 3]{Saw16}, the trace of $\wedge^2W_\ell\otimes \chi$ on~$G^1$ is a constant function if and only if the trace of $\wedge^2W$ on $J\ST(A)^0$ is a constant function. But the latter is not true, since if we set
$$
A=J\begin{pmatrix}
u & 0\\
0 & \overline u
\end{pmatrix}\,,
\quad\text{with }
u=\begin{pmatrix}
0 & 0 & 1\\
1 & 0 & 0\\
0 & 1 & 0
\end{pmatrix}\,,
$$
then $\Tr\wedge^2W(A)=0$, while $\Tr\wedge^2W(J)=3$.
\end{proof}

\section{\texorpdfstring{$L$}{L}-polynomials of Picard curves: a practical algorithm}\label{section: algorithmic}

Let $C$ be a generic Picard curve defined over $\Q$. In this section we use the results obtained in Section \ref{section: theoretical} to develop and implement an algorithm for the computation of the $L$-polynomials $L_p(C,T)$ for almost all primes up to some bound $N$. We first describe the algorithm, then analyze its running time and correctness, and finally discuss its implementation.

\subsection{Description}\label{section: algdesc} Denote by $\fSst(C)$ the subset of ordinary primes in $\fS(C)$. 
For $N\geq 1$, set
$$
\fSst_N(C):=\fSst(C)\cap [1,N]\,,\qquad \fI_N(C):=\fI(C)\cap [1,N]\,.
$$

We will rely on the following algorithms documented in the literature:

\begin{itemize}
\item \textsc{ComputeCartierManinMatrices} (\cite[p. 10]{Sut20}). Algorithm that, given an integer $N\geq 1$ and a Picard curve $C$, computes the Cartier--Manin matrix $A_p$ of $C$ at $p$ for every prime $5 \leq p\leq N$ not dividing the discriminant of $f$.\\ 
Running time: $N\log(N)^{3+o(1)}$.\vspace{0,2cm}
\item \textsc{FindCubicRoots} (See Remark \ref{remark: findcubicroot}).
Algorithm that, given an integer $N\geq 1$, returns a primitive cubic root of unity in~$\F_p^\times$ for every prime $p\equiv 1\pmod 3$ and~$\leq N$.\\
Running time: $N\log(N)^{3+o(1)}$.\vspace{0,2cm}
\item \textsc{IsIrreducible} (\cite[Thm. 14.4]{GG13}). Algorithm that, given the polynomial $\tilde f\in \F_p[x]$ of degree~$4$, determines whether it is irreducible or not.\\
Running time: $\log(p)^{2+o(1)}$. \vspace{0,2cm}
\item \textsc{HasRationalRoot} (See the Corollary in the Appendix). Algorithm that, given the polynomial $\tilde \psi_f\in \F_p[x]$ of degree~$9$, determines whether it has an $\F_p$-rational root or not.\\
Running time: $\log(p)^{2+o(1)}$. \vspace{0,2cm}
\end{itemize}

\begin{remark}\label{remark: findcubicroot}
We describe algorithm \textsc{FindCubicRoots}. Pick an elliptic curve with CM discriminant $D=-12$, for example $E\colon y^2 = x^3 - 15x + 22$. The algorithms given in \cite{HS14}, \cite{HS16}, and \cite{Sut20} compute the trace of Frobenius $t_p$ of $E$ at $p$ for all primes $p\leq N$ in time $N\log(N)^{3+o(1)}$, excluding a finite subset of all primes. For every prime $p\equiv 1 \pmod 3$, which is necessarily of good ordinary reduction, there exists an integer $u_p$ satisfying the norm equation 
$$
4p = t_p^2 + 12u_p^2\,.
$$
Since $p$ is ordinary for $E$, $u_p$ is invertible modulo $p$, and the above equation provides a way to compute a square root of $-3$ in $\F_p$ as $t_p/(2u_p)$, and hence $\zeta_3 \in \F_p$. We note that this is a one-time computation if one wishes to run the algorithm for multiple curves.
\end{remark}

At this point we have described all the theoretical and computational tools necessary in order to state the main algorithm of this article.

\begin{algorithm}[\textsc{ComputeLpolynomials}]\label{algorithm: full} 
Given an integer $N\geq 1$ and a Picard curve $C$ over $\Q$, compute $L_p(C,T)$ for every prime $p\in \fSst_N(C)\cup\fI_N(C)$, by following the steps:
\begin{enumerate}[a)]
\item Apply \textsc{ComputeCartierManinMatrices} to obtain $A_p$ for every prime $5 \leq p\leq N$ not dividing the discriminant of $f$.
\item By means of \textsc{FindCubicRoots}, find primitive cubic roots of unity for every $p\equiv 1 \pmod 3$ up to $N$.
\item For every $p\equiv 1 \pmod 3$ as in $a)$, determine whether $p\in \fSst(C)$, by computing $\rk(A_p)$, and if so then:
\begin{enumerate}[1)]
\item If $p< 53$, then use naive point counting to compute $L_p(C,T)$. Otherwise continue to $2)$.
\item Let $\sigma(\zeta_3),\sigmabar(\zeta_3)\in \F_p^{\times}$ be the primitive cubic roots of unity found in b). Solve the linear system \eqref{equation: linear system} to determine $a_\p$ (and hence $a_{\overline{\p}}$).
\item Choose an integer $1\leq\gamma\leq p$ such that $\gamma\equiv \sigma(\zeta_3)\pmod p$. Determine $\pi\in \Z[\zeta_3]$ as the $\gcd(\gamma-\zeta_3,p)$, by applying the extended Euclidean division algorithm in $\Z[\zeta_3]$.
\item Solve the linear equation \eqref{equation: determinezeta} to determine $\zeta$ and thereby $c_\p$.
\item Apply part $i)$ of Lemma \ref{lemma: bpord} to determine $b_\p$ and thereby $L_p(C,T)$.
\end{enumerate}
\item For every $p\equiv 2 \pmod 3$ as in $a)$, let $t_p$ be as in Lemma \ref{lemma: pmod3eq2}. Then:
\begin{enumerate}[1)]
\item Determine $t_p$ modulo $p$ by means of $A_p$.
\item Apply the Corollary in the Appendix to determine $t_p$ modulo $2$ by applying \textsc{HasRationalRoot} to $\tilde\psi_f$.
\item Apply Lemma~\ref{lemma: fmodp} or Proposition~\ref{proposition: Lpolymod3} to determine $t_p$ modulo $3$ by applying \textsc{IsIrreducible} to $\tilde f$. 
\item Determine $L_p(C,T)$ from the above information by using Lemma \ref{lemma: pmod3eq2}.
\end{enumerate}
\end{enumerate}
\end{algorithm}

\begin{theorem} Let $C$ be a Picard curve. Algorithm \ref{algorithm: full} computes $L_p(C,T)$ for every prime $p\in \fSst_N(C)\cup\fI_N(C)$ in time $N \log(N)^{3+o(1)}$. When $C$ is generic, the complement of $\fSst(C)\cup\fI(C)$ in the set of primes has density $0$.
\end{theorem}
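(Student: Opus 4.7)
The plan is to verify the three assertions of the theorem in sequence: correctness of the output on $\fSst_N(C)\cup\fI_N(C)$, the running time bound, and the density statement. The first two require going through the steps of Algorithm \ref{algorithm: full} and appealing to the constructive proofs in Section \ref{section: theoretical}, while the third is a quick consequence of Corollary \ref{corollary: genericordinary}.

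For correctness, I would split into the two branches of the algorithm. In branch c) (primes $p\equiv 1\pmod 3$), the test $\rk(A_p)=3$ correctly identifies ordinary primes by part iii)--iv) of Lemma \ref{lemma: bpord}; once $p\in\fSst(C)$ is known and $p\geq 53$, steps c)2--c)5 are literally the recipe extracted from the proof of Corollary \ref{corollary: CartierManin}. The only new thing to check is that step c)3 genuinely produces a generator $\pi$ of $\p$: since $\gamma\equiv\sigma(\zeta_3)\pmod p$, the element $\gamma-\zeta_3\in\cO$ lies in $\p$ and has norm coprime to $\overline\p$, so a single Euclidean $\gcd$ in $\cO$ yields $\pi$. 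For $p<53$ naive point counting at $\F_p$ and $\F_{p^2}$ trivially recovers $L_p(C,T)$. In branch d) (primes $p\equiv 2\pmod 3$), steps d)1--d)3 are exactly the three pieces of information supplied to Theorem \ref{theorem: inertLpolys}, and step d)4 reconstructs $t_p$ (and hence $L_p(C,T)$ via Lemma \ref{lemma: pmod3eq2}) via CRT modulo $6p$ using $|t_p|\leq 2p$.

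For the running time, the two bulk subroutines \textsc{ComputeCartierManinMatrices} and \textsc{FindCubicRoots} each cost $N\log(N)^{3+o(1)}$ by the cited references. All other work is done one prime at a time. Each per-prime computation consists of: a rank computation of a $3\times 3$ matrix over $\F_p$, solving a $2\times 2$ linear system over $\F_p$, a Euclidean $\gcd$ in $\cO=\Z[\zeta_3]$ on inputs of norm $O(p^2)$, and one call each to \textsc{IsIrreducible} on $\tilde f$ (degree $4$) and \textsc{HasRationalRoot} on $\tilde\psi_f$ (degree $9$); all of these run in $\log(p)^{2+o(1)}$ bit operations. Summing over all primes $p\leq N$, by the prime number theorem this contributes $O(N\log(N)^{1+o(1)})$, which is absorbed by the $N\log(N)^{3+o(1)}$ bound. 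The $O(1)$ small primes $p<53$ handled by naive point counting contribute a negligible constant.

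For the density claim, observe that the complement of $\fSst(C)\cup\fI(C)$ in the set of all primes is contained in the union of three sets: the finite set of primes dividing $2\cdot 3 \cdot \mathrm{disc}(f)$, the set $\fS(C)\setminus\fSst(C)$ of non-ordinary primes $\equiv 1\pmod 3$, and the empty set on the $\equiv 2\pmod 3$ side. The first set is finite, hence of density $0$, and the second has density $0$ by Corollary \ref{corollary: genericordinary} under the genericity hypothesis. Since $\fS(C)\cup\fI(C)$ already contains every prime outside a finite set, the union $\fSst(C)\cup\fI(C)$ therefore has density $1$ and its complement has density $0$, completing the proof. The only genuinely subtle step in the whole argument is the appeal to Corollary \ref{corollary: genericordinary}, whose proof via Sawin's equidistribution theorem was the main technical work of Section \ref{section: theoretical}; everything else is bookkeeping.
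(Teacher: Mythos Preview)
Your proof is correct and follows essentially the same approach as the paper's, which likewise appeals to Corollary~\ref{corollary: CartierManin} and Theorem~\ref{theorem: inertLpolys} for correctness, to the cited complexities of the subroutines for the running time, and to Corollary~\ref{corollary: genericordinary} for the density claim. One small slip: in your justification of step c)3 you say $\gamma-\zeta_3$ has \emph{norm} coprime to $\overline\p$, but its norm $\gamma^2+\gamma+1\in\Z$ is actually divisible by $p$; what you need (and what holds) is that $\gamma-\zeta_3\notin\overline\p$, which is precisely the Dedekind--Kummer statement the paper invokes.
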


\begin{proof} Once we have computed the Cartier--Manin matrix by using \textsc{ComputeCartierManinMatrices}, correctness of the algorithm follows from the proofs of Corollary~\ref{corollary: CartierManin} and Theorem~\ref{theorem: inertLpolys}. Note that in step c) 3), we may set $\pi=\gcd(\gamma-\zeta_3,p)$ in virtue of the Dedekind--Kummer theorem. 
As for complexity, note that both step a) and b) already have the claimed complexity (although we note that b) is a one-time computation). We only need to show that the remaining steps do not exceed this complexity. As we have seen, none of the algorithms \textsc{IsIrreducible} and \textsc{HasRationalRoot} does, and the remaining steps are clearly faster as they solve a small linear system or compute a small gcd in $\Z[\zeta_3]$. In the generic case, the claim about density is ensured by Corollary~\ref{corollary: genericordinary}.
\end{proof}

\begin{remark}\label{remark: genericity}
There exist several methods of verifying that a given Picard curve $C$ defined over $k=\Q(\zeta_3)$ is generic. On the one hand, one may apply the criterions provided by Zarhin (specifically see \cite[Thm 1.3]{Zar18}). On the other hand, one can use results due to Upton. More precisely, we claim that if $\ell\geq 5$ is a prime $\equiv 1 \pmod 3$ such that the mod-$\ell$ image of the Galois representation associated to $A:=\Jac(C)$ is $\GL_3(\F_\ell)$, then $C$ is generic. Indeed, by \cite[Prop. 6]{Upt09}, the hypotheses imply that the image of the $\ell$-adic representation $\varrho_{A,\ell}$ is $\GL_3(\Z_\ell)$. Then, for any finite extension $L/k$, the commutant of $\varrho_{A,\ell}(G_L)$ in $\End(V_\ell(A))$ is $2$-dimensional. By Faltings' isogeny theorem, this commutant is $\End(A_L)\otimes \Q_\ell$, and hence $\End(A_\Qbar)\simeq \Z[\zeta_3]$, which completes the proof of the claim. Verifying that the mod-$\ell$ image of the Galois representation associated to $A$ is $\GL_3(\F_\ell)$ can be done by using \cite[Lemma 3]{Upt09} and computing a few $L_p(C,T)$ (this is done in \cite[\S6]{Upt09} for two specific curves, to which we will return in \S\ref{section: implementation}).
\end{remark}

\begin{remark}\label{remark: especulation1}
Note that by Dirichlet's density theorem and by Corollary \ref{corollary: genericordinary}, the set of primes $p\leq N$ in $\fS(C)-\fSst(C)$ has size $o(N/\log(N))$. Despite being unable to prove it, it is conceivable that this set is actually small enough so that the combination of Algorithm~\ref{algorithm: full} with the algorithm of \cite{ABCMT19} (to separately treat the primes in $\fS(C)-\fSst(C)$) would yield an algorithm to compute $L_p(C,T)$ for every prime $p\leq N$ in time $N\log(N)^{3+o(1)}$.  
\end{remark}

We will now sketch how one can use the proof of Theorem \ref{theorem: inertLpolysJacobian} to give a randomized algorithm of Las Vegas time accomplishing the same task as \textsc{ComputeLPolynomials} and having the same expected complexity (for details and an actual implementation we refer to \cite{AP20}). 

While from a computational perspective the determination of the exponent $\lambda$ of $\Jac(C_p)(\F_p)$ presents difficulties, a detailed examination of the proof of Theorem~\ref{theorem: inertLpolysJacobian} shows that only partial (and effectively computable) information on $\lambda$ is required in order to uniquely determine $L_p(C,T)$.

The variant that we want to discuss only differs from Algorithm \ref{algorithm: full} in case $p$ is in $\fI(C)$. Let $p\geq 877$ be one such prime. Assume given the data $i)$ and $ii)$ of Theorem~\ref{theorem: inertLpolysJacobian}. Suppose also that we are in the case that there exists $p\leq s_p\leq 2p$ such that $t_p\equiv s_p \pmod {3p}$, and let $n_1$ and $n_2$ be as defined by \eqref{equation: twoorders}. We claim that exactly one of the following options occurs:
\begin{enumerate}[i)]
\item $s_p\not =p,p+1$ or $s_p=p+1$ and $p$ is not a Fermat prime. In this case, $\lambda \nmid 3p(p+1)=n_2-n_1$ so it cannot divide both $n_1$ and $n_2$. Thus, there is a point $P$ in $\Jac(C_p)(\F_p)$ such that $n_i\cdot P\not =0$ for $i=1$ or $2$, and then $n=n_{3-i}$.
\item $s_p = p$ and there is a prime $\ell\geq 5$ dividing $\lambda$ but not $p+1$. In this case $n=(p+1)(p^2-p+1)$.
\item $s_p = p$ and there are three points $P,\, Q, R$ in $\Jac(C_p)(\F_p)$ generating a subgroup of order divisible by $3^{v_3(p+1)+2}$. In this case $n=(p+1)^3$.
\item $s_p=p+1$, $p$ is a Fermat prime, and there is an odd prime $\ell$ dividing $\lambda$ but not $p(p+1)(p-1)$. In this case $n=(1+p)p(p+2)$.
\item $s_p=p+1$, $p$ is a Fermat prime, and there is a point $P$ in $\Jac(C_p)(\F_p)$ of order divisible by $4$. In this case $n=(1+p)p(p-1)$.
\end{enumerate}

Indeed, it has been shown in the course of the proof of Theorem \ref{theorem: inertLpolysJacobian} that we have the conclusion of $i)$ if $s_p\not=p,p+1$ or if $s_p=p+1$ and $p$ is not a Fermat prime. 
From \eqref{equation: twoorders},  $s_p=p$ implies that $n$ is either $n_1=(p+1)(p^2-p+1)$ or $n_2=(p+1)^3$. To show that $ii)$ and $iii)$ are mutually exclusive, it suffices to note that $v_3(p^2-p+1)=1$. Thus, $v_3(n_1)=v_3(p+1)+1$ and $p^2-p+1$ has a divisor $\ell\geq 5$. Such an $\ell$ necessarily fails to divide $p+1$. Recall the values $m_j$ introduced in \eqref{equation: Jacdec}. It remains to show that if $n=n_2$, then $v_3(m_4m_5m_6)\geq v_3(p+1)+2$. But we have seen that $v_3(m_j)=0$ for $1\leq j\leq 3$ and therefore we have $v_3(m_4\cdot m_5\cdot m_6)= 3v_3(p+1)\geq v_3(p+1)+2$. 
From \eqref{equation: twoorders}, $s_p=p+1$ implies that $n$ is either $n_1=(1+p)p(p-1)$ or $n_2=(1+p)p(p+2)$. Under the assumption that $p$ is a Fermat prime, the conclusions of $iv)$ and $v)$ have been seen at the end of the proof of Theorem \ref{theorem: inertLpolysJacobian}. 

Given these five cases, we may produce a Las Vegas algorithm by generating random points of the Jacobian (using \cite[\S12.2]{MJS20}, for example) and checking these conditions efficiently (using the methods of \cite{Sut11}, for example). We note that to check each such condition only requires expected $O(1)$ random elements of $\Jac(C_p)(\F_p)$. On a generic Picard curve, \S 3.4 ensures a dense subset of primes fall into case $i)$, the simplest case to check. The expected complexity of such an algorithm is identical to that of the deterministic algorithm, but in practice slower.

\subsection{Implementation} \label{section: implementation}

The Github repository \cite{AP20} includes an implementation of Algorithm \ref{algorithm: full} in \cite{PARIGP}. This repository also contains an implementation in \cite{Sage} of the Las Vegas algorithm discussed at the end of the previous section, for which we used the Jacobian arithmetic from \cite{MJS20}. We note that this algorithm is still practical, but is slower by a constant factor due to Jacobian arithmetic being expensive. 

In either case, the full algorithm for lifting is significantly faster than previous algorithms and has complexity $N\log(N)^{3+o(1)}$ when computing $Z_p(C,T)$ for almost all $p\le N$. Our algorithm can also operate independently for fixed $p$ when $L_p(C,T) \pmod{p}$ is already provided - this means we can speed up algorithms such as \cite{ABCMT19} for a specific prime, since the most expensive lifting component is now negligible. As a result, it can also be used to more efficiently compute $L_p(C,T)$ when $p$ is large enough that computing $L_p(C,T)$ for almost all $p\le N$ is impractical. For a given generic Picard curve over $\Q$, in Table \ref{tab: fullruntimes} we report the average running time per prime over the primes $\le N$ of Algorithm \ref{algorithm: full} with the implementation in \cite{PARIGP}.

\begin{table}[!ht]
\begin{adjustwidth}{-4em}{-4em}
	\begin{center}
		\begin{tabular}{lllllllll}
		\toprule
		\multicolumn{1}{l|}{$N$}                 & \multicolumn{2}{c|}{\centering{$2^{16}$}}                           & \multicolumn{2}{c|}{\centering{$2^{20}$}}                                   & \multicolumn{2}{c|}{$2^{24}$}                                    & \multicolumn{2}{c}{$2^{28}$}               \\ \hline
		\multicolumn{1}{l|}{Algorithm}           & \multicolumn{1}{l|}{Old} & \multicolumn{1}{l|}{New} & \multicolumn{1}{l|}{Old}  & \multicolumn{1}{l|}{New}        & \multicolumn{1}{l|}{Old}   & \multicolumn{1}{l|}{New}        & \multicolumn{1}{l|}{Old}   & New       \\ \hline
		\rowcolor{gray!20} 
\multicolumn{9}{l}{Full zeta function} \\ \hline
		\multicolumn{1}{l|}{$C_1$}       & \multicolumn{1}{l|}{113.2}     & \multicolumn{1}{l|}{\textbf{0.31}}       & \multicolumn{1}{l|}{215.1} & \multicolumn{1}{l|}{\textbf{0.69}} & \multicolumn{1}{l|}{1152.7} & \multicolumn{1}{l|}{\textbf{1.50}} & \multicolumn{1}{l|}{5051.4} & \textbf{4.76} \\ \cline{1-1}
		\multicolumn{1}{l|}{$C_2$} & \multicolumn{1}{l|}{111.3}     & \multicolumn{1}{l|}{\textbf{0.33}}       & \multicolumn{1}{l|}{213.5} & \multicolumn{1}{l|}{\textbf{0.71}} & \multicolumn{1}{l|}{1152.9} & \multicolumn{1}{l|}{\textbf{1.54}} & \multicolumn{1}{l|}{5053.9} & \textbf{4.87} \\ \hline
		\rowcolor{gray!20} 
\multicolumn{9}{l}{Lifting component} \\ \hline
		\multicolumn{1}{l|}{$C_1$}       & \multicolumn{1}{l|}{}     & \multicolumn{1}{l|}{\textbf{0.12}}       & \multicolumn{1}{l|}{}      & \multicolumn{1}{l|}{\textbf{0.12}} & \multicolumn{1}{l|}{}       & \multicolumn{1}{l|}{\textbf{0.13}} & \multicolumn{1}{l|}{}       & \textbf{0.14} \\ \cline{1-1}
		\multicolumn{1}{l|}{$C_2$} & \multicolumn{1}{l|}{}     & \multicolumn{1}{l|}{\textbf{0.12}}       & \multicolumn{1}{l|}{}      & \multicolumn{1}{l|}{\textbf{0.12}} & \multicolumn{1}{l|}{}       & \multicolumn{1}{l|}{\textbf{0.13}} & \multicolumn{1}{l|}{}       & \textbf{0.14} \\ \hline
		\rowcolor{gray!20}
\multicolumn{9}{l}{Cartier--Manin matrix} \\ \hline
		\multicolumn{1}{l|}{$C_1$}       & \multicolumn{1}{l|}{35.4}     & \multicolumn{1}{l|}{0.19}       & \multicolumn{1}{l|}{79.1}      & \multicolumn{1}{l|}{0.57}          & \multicolumn{1}{l|}{277.8}       & \multicolumn{1}{l|}{1.37}          & \multicolumn{1}{l|}{1368.3}       & 4.63          \\ \cline{1-1}
		\multicolumn{1}{l|}{$C_2$} & \multicolumn{1}{l|}{34.4}     & \multicolumn{1}{l|}{0.20}       & \multicolumn{1}{l|}{77.6}      & \multicolumn{1}{l|}{0.59}          & \multicolumn{1}{l|}{276.7}       & \multicolumn{1}{l|}{1.41}          & \multicolumn{1}{l|}{1366.7}       & 4.74          \\ \bottomrule
		\end{tabular}
	\end{center}
	\bigskip
	\caption{For the curves $C_1: y^3=x^4+x+1$ and $C_2: y^3=x^4+3x^2+2x+1$, we display the average computation time spent per prime $p\le N$ for \cite{ABCMT19}, \cite{Sut20}, and Algorithm \ref{algorithm: full} in milliseconds (ms). Bold numbers indicate new results from our algorithm. The timings are taken on a 3.40GHz Intel(R) Xeon(R) E5-2687W CPU. All computations were run with a single core.}
	\label{tab: fullruntimes}
\end{adjustwidth}

\end{table}

In Table \ref{tab: fullruntimes}, to compute these running times we ran each computation for $p\le N$ three times and took the minimum of those attempts. 
The `New' columns have running times for our algorithm and \cite{Sut20}, since they are used together to compute the full zeta function. In the `Lifting component' section, we have put running times for Algorithm \ref{algorithm: full} and in the `Cartier--Manin matrix' section we have put running times for an implementation of \cite{Sut20} in C.
In the `Old' columns, we have the implementation of the algorithm \cite{ABCMT19} in \cite{Sage} computing the full zeta function. We have also used the Frobenius matrix method in \cite{Sage} to compute a matrix which determines the Cartier--Manin matrix. These times are shown in `Old' columns of the `Full zeta function' and `Cartier--Manin' matrix sections respectively in Table \ref{tab: fullruntimes}. Running times for all of these algorithms have been given for two chosen generic curves $C_1: y^3=x^4+x+1$ and $C_2: y^3=x^4+3x^2+2x+1$ (see \cite[\S 6.1.1, \S 6.2.2]{Upt09} for a proof of the genericity of these curves; we note that Zarhin's criterion recalled in Remark~\ref{remark: genericity} also applies). For example, we can read off the table that for $C_1$ computing the zeta function with our algorithm for $p\le 2^{28}$ takes an average of 4.76 ms per prime. Of this time, on average we spend 0.14 ms on the lifting component described in this paper and 4.63 ms on computing the Cartier--Manin matrices. In comparison, the implementation of \cite{ABCMT19} takes roughly 5051.4 ms per prime.

We note that \cite{ABCMT19} is not fast enough to compute the full zeta function for all $p\le N$ in a reasonable amount of time when $N$ is very large, so we sampled one out of $10^2, 10^3, 10^4, 10^5$ primes respectively for $N=2^{16},2^{20},2^{24}$ and $2^{28}$ to get estimates for the running times. The same was done for the computation of the Frobenius matrices. Each computation was run on a single core in order to have a fair comparison. Algorithm ~\ref{algorithm: full} is used in the `Lifting component' part of the table to compute the full zeta function once the algorithm from \cite{Sut20} finishes. As can be seen from the table, the lifting component is always faster on average and performs better relative to computing $L_p(C,T) \pmod{p}$ as $p$ becomes very large. There is some variation in the average running times between the two curves, but in each example this is $\approx 10^{-4}$ ms which is why it does not appear in the table.

As noted in the beginning of this section, Algorithm \ref{algorithm: full} also allows us to speed up \cite{ABCMT19} on a Picard curve for a specific prime $p$ by a constant factor. In particular, we need only compute $L_p(C,T) \pmod{p}$, and then lifting this takes a negligible amount of time. The time for the computating $L_p(C,T) \pmod{p}$ differs by a constant factor compared to computing $L_p(C,T)$ using \cite{ABCMT19}.
Using \cite{Sage}, we can estimate this constant factor to be about $8$, for example from
\begin{verbatim}
	sage: p=(2**40).next_prime()
	sage: x = PolynomialRing(GF(p),"x").gen()
	sage: CyclicCover(3, x^4 + x + 1).frobenius_matrix(1)
\end{verbatim}
which takes a total of 3 min and 23 s, while using the algorithm to compute the full result $L_p(C,T)$ takes 24 min and 24 s. By applying Algorithm \ref{algorithm: full} after computing the Frobenius matrix modulo $p$ we improve the running time by around a factor of 8, because for primes of this size our algorithm still takes on the order of a millisecond, which is negligible.

\subsection*{Acknowledgements} Thanks to Andrew Sutherland for suggesting the problem, for his guidance throughout the project, for triggering the collaboration between the authors, and for writing a delightful appendix crucial to this work. Thanks to Chun Hong Lo for valuable advice as well as help proofreading this paper. Thanks to Kiran Kedlaya and Bjorn Poonen for helpful conversations, and to the anonymous referees for their valuable suggestions and corrections. Fit\'e was financially supported by the Simons Foundation grant 550033.

\newpage

\section*{Appendix}
\begin{center}
{by Andrew V. Sutherland}\footnote{Department of Mathematics, Massachusetts Institute of Technology, 77 Massachusetts Ave., Cambridge, MA 02139, USA; email: \texttt{drew@math.mit.edu}, URL: \texttt{https://math.mit.edu/~drew}}
\end{center}

In this appendix we give a constructive proof of the following theorem.
\begin{theorem*}
Let $p>3$ be a prime congruent to $2$ modulo $3$, let $C_p\colon y^3=f(x)$ be a Picard curve over $\F_p$ with $L$-polynomial $L_p\in\Z[T]$ .  The reduction of $L_p(T)$ modulo $2$ can be computed (deterministically) in $O((\log p)^2(\log \log p))$ time.
\end{theorem*}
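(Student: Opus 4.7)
The plan is to split the proof into two components: (A) showing that the reduction of $L_p(T)$ modulo $2$ is determined by whether $\psi_f$ has an $\F_p$-rational root, and (B) describing a root-test that meets the claimed time bound.

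For (A), I will first reduce to computing $t_p \pmod{2}$. By Lemma~\ref{lemma: pmod3eq2} we have
\[
L_p(C,T) = (1+pT^2)(1 - t_p T^2 + p^2 T^4).
\]
Since $p$, $p^2$, and $p^3$ are all odd, reducing modulo $2$ yields $L_p(C,T) \equiv 1 + (1+t_p)(T^2 + T^4) + T^6 \pmod{2}$, which equals $(1+T)^6$ when $t_p$ is even and $(1+T)^2(T^2+T+1)^2$ when $t_p$ is odd. Hence it suffices to decide the parity of $t_p$, or equivalently to decide whether the characteristic polynomial of $\Frob_p$ on $J[2]$ is unipotent or contains the irreducible factor $T^2+T+1$.

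Next I will invoke the Lemma (stated earlier in the excerpt and to be proved separately in the Appendix) that the splitting field of $\psi_f(x^3/2)$ is $\Q(J[2])$. Since $p \equiv 2 \pmod{3}$, the map $x \mapsto x^3$ is a bijection on $\F_p$, so $\psi_f(x^3/2)$ has an $\F_p$-rational root if and only if $\psi_f$ does. This identifies the $\F_p$-rational roots of $\psi_f$ with Frobenius-fixed points among a canonical 9-element set of $\zeta_3$-equivariant invariants of $J[2]\setminus\{0\}$. The key structural claim I then need is that $\Frob_p$ fixes one of these 9 invariants if and only if $\Frob_p$ acts on $J[2]$ with characteristic polynomial $(1+T)^6$ (i.e.\ $t_p$ even), the obstruction to a fixed point in the other case being precisely the presence of the order-$3$ action coming from the factor $T^2+T+1$. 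Verifying this equivalence — which amounts to a Galois-equivariant tracking of the explicit construction of $\psi_f$ from $f$ via the $(1-\zeta_3)$-isogeny on $J$ — is the main obstacle.

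For (B), the algorithmic step is routine: to decide whether $\psi_f \in \F_p[x]$ of degree $9$ has an $\F_p$-rational root, compute $x^p \bmod \psi_f(x)$ by binary exponentiation in the constant-dimensional $\F_p$-algebra $\F_p[x]/(\psi_f)$, and then take $\gcd(\psi_f(x), x^p - x)$. The exponentiation requires $O(\log p)$ multiplications in $\F_p[x]/(\psi_f)$, each costing $O(1)$ operations in $\F_p$; each $\F_p$-operation costs $O(\log p \cdot \log\log p)$ by standard fast modular arithmetic. The final gcd is $O(1)$ field operations. Combining gives the stated $O((\log p)^2 \log\log p)$ bound. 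The hard part of the whole argument is thus entirely concentrated in the structural identification in (A); once that dictionary is in hand, both the correctness and the complexity are immediate.
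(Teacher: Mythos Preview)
Your reduction in (A) to the parity of $t_p$ and your algorithmic analysis in (B) are both correct and match the paper. The gap is precisely where you flag it: you state the key equivalence---$\psi_f$ has an $\F_p$-rational root if and only if the characteristic polynomial of $\Frob_p$ on $J[2]$ is $(1+T)^6$---but you do not prove it, and your suggested justification (``the obstruction to a fixed point being the presence of the factor $T^2+T+1$'') is a heuristic, not an argument. A unipotent characteristic polynomial does not by itself force a fixed point on a quotient set, and the 9-element set is not $J[2]$ but a set of $\zeta_3$-orbits of bitangent divisors, so the link requires real work. Your proposed route, tracking ``the $(1-\zeta_3)$-isogeny,'' is also misdirected: that isogeny governs $3$-torsion in the paper, whereas $\psi_f$ arises from an explicit parametrization of bitangents (affine order-$2$ divisors).

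The paper closes this gap by a finite group computation rather than by tracking the construction. It identifies the image of $\bar\rho_2$ as lying in an explicit subgroup $G\subseteq\Sp_6(\F_2)$ isomorphic to $\GU(3,\F_4)\rtimes\Gal(\F_4/\F_2)$, with $\Frob_p$ for $p\equiv 2\pmod 3$ landing in the nontrivial coset $G-H$. A direct enumeration shows every element of $G-H$ has order $2$, $6$, or $8$; orders $2$ and $8$ give characteristic polynomial $T^6+T^4+T^2+1$, order $6$ gives $T^6+1$. Passing to the quotient $G/Z\simeq\AGL(3,2)$ (which preserves these orders) gives the degree-$9$ permutation action on the roots of $\psi_f$, and one checks the corresponding cycle types are $2^31^3$, $8^11^1$, $6^13^1$: exactly the order-$6$ case has no fixed point. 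This finite check is what you are missing.
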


Without loss of generality we may assume $C_p$ is the reduction of a Picard curve $C\colon y^3=f(x)$ over $\Q$.  The Jacobian $J:= \Jac(C)$ is an abelian variety of dimension~$3$, thus the action of $\Gal(\Qbar/\Q)$ on $J[2]$ gives rise to a mod-2 Galois representation
\[
\bar\rho_2\colon \Gal(\Qbar/\Q)\to \Aut(J[2])\simeq \GSp_6(\F_2)=\Sp_6(\F_2),
\]
where we have used the Weil pairing to view $J[2]\simeq \F_2^6$ as a symplectic space. The endomorphism ring of the base change of $J$ to $\Q(\zeta_3)$ contains $\Z[\zeta_3]$ and the prime $2$ is inert in $\Q(\zeta_3)$; these facts imply that the restriction of $\bar\rho_2$ to $\Gal(\Qbar/\Q(\zeta_3))$ has image in $U(3,2)=\GU(3,\F_4)$; see \cite[\S 2]{Upt09} for details.  

Up to conjugacy, there is a unique subgroup $H\subseteq \Sp_6(\F_2)$ that is isomorphic to $\GU(3,\F_4)$, which has small group identifier $\langle 648,533\rangle$.\footnote{In this appendix isomorphism classes of groups of order $n<2048$ are identified by the label $\langle n,i\rangle$ assigned to them by the Small Groups Library \cite{BEO01} used in GAP, Sage, and Magma.}
The endomorphism ring of $J/\Q$ does not contain $\Z[\zeta_3]$, so the image of $\bar\rho_2$ does not lie in $H$, in general. It lies in a subgroup of $\Sp_6(\F_2)$ isomorphic to $\GU(3,\F_4)\rtimes\Gal(\F_4/\F_2)$, with small group identifier $\langle 1296,2891\rangle$, which contains $H$ with index 2.  Up to conjugacy there is a unique such subgroup $G$; it is the normalizer of $H$ in $\Sp_6(\F_2)$.  Let $I:=\left(\begin{smallmatrix}1&0\\0&1\end{smallmatrix}\right)$, $z_3:=\left(\begin{smallmatrix}1&1\\1&0\end{smallmatrix}\right)$, $s_2:=\left(\begin{smallmatrix}1&1\\0&1\end{smallmatrix}\right)$, and define
\[
A:=\begin{pmatrix} z_3 & 0 & 0\\ 0 & I & 0\\ 0 & 0 & z_3\end{pmatrix},\qquad B:=\begin{pmatrix} 0 & 0 & z_3^2\\0&z_3^2&z_3^2\\z_3^2&z_3^2&z_3\end{pmatrix},\qquad S_2:=\begin{pmatrix}s_2&0&0\\0&s_2&0\\0&0&s_2\end{pmatrix}.
\]
Then we can take $H=\langle A,B\rangle$ and $G=\langle A, B,S_2\rangle$.

For each prime $p>3$ of good reduction for $C$ the representation $\bar \rho_2$ maps the Frobenius element $\Frob_p\in \Gal(\Qbar/\Q)$ to a conjugacy class of $\Sp_6(\F_2)$ whose characteristic polynomial is the reduction of the characteristic polynomial $\chi_p\in \Z[T]$ of the Frobenius endomorphism $\pi$ of $J_p:=\Jac(C_p)$, equivalently, the characteristic polynomial of the endomorphism of $J_p[2]\simeq \F_2^6$ induced by $\pi$.  The polynomials $\chi_p(T)$ and $L_p(T)$ are reciprocal, meaning that $L_p(T)=T^6\chi_p(T^{-1})$, as are their reductions to $\F_2[T]$.  The image of $\Frob_p$ under $\bar\rho_2$ lies $H$ when $p$ splits in $\Q(\zeta_3)$, and in the complement $G-H$ when $p$ is inert in $\Q(\zeta_3)$, equivalently, when $p\equiv 2\pmod 3$.

Thus to compute the reduction $\bar L_p\in \F_2[T]$ for $p\equiv 2\pmod 3$, it suffices to determine the characteristic polynomial of $\bar\rho_2(\Frob_p)\in G-H$, for which there are only two possibilities:
\[
T^6+T^4+T^2+1\qquad\text{or}\qquad T^6+1.
\]
The first occurs whenever $\bar\rho_2(\Frob_p)$ has order 2 or 8; the second occurs when $\bar\rho_2(\Frob_p)$ has order 6.  It is an easy computation in \cite{Sage} to verify this fact, and that every element of $G-H$ has order 2, 6, or 8.

The fixed field of the kernel of $\bar\rho_2$ is the 2-torsion field $\Q(J[2])$, and we have an isomorphism $\Gal(\Q(J[2])/\Q)\simeq \im\bar\rho_2$.
It follows that to compute $\bar L_p\in \F_2[T]$ it suffices to determine the order of the restriction of $\Frob_p$ to $\Gal(\Q(J[2])/\Q)$.  The group~$G$ contains a unique normal subgroup $Z$ of order 3 generated by the matrix $Z_3:=\mathrm{diag}(z_3,z_3,z_3)$ which gives the action of $\zeta_3\in \End(J_{\Q(\zeta_3)})$ on $J[2]$.  The quotient $G/Z$ is isomorphic to the general affine group $\AGL(3,2)$, with small group identifier $\langle 432, 734\rangle$;  the projection $G\to G/Z$ does not change the order of elements of $G-H$.

The group $G/Z\simeq \mathrm{AGL}(3,2)$ has a natural permutation representation of degree~9 corresponding to the transitive group with LMFDB  label \href{https://www.lmfdb.org/GaloisGroup/9T26}{\texttt{9T26}} \cite{LMFDB}.  This permutation representation can be realized via the action of $\Gal(\Qbar/\Q)$ on the $\zeta_3$-orbits of the 27 affine bitangents of a generic Picard curve.

As shown in \cite[\S 2.2]{BTW05}, each point of order 2 in $J_p(\F_p)$ corresponds to the class of an ideal of the form $\langle s,u+y\rangle$ in the ring $\F_p[x,y]/(y^3-f(x))$, with $s,u\in \F_p[x]$ satisfying $\deg u < \deg s\le 3$, with $s$ monic and $s^2 = u^3+f$.  Those with $\deg s=2$ correspond to divisors of the form $P_1+P_2-2\infty$ arising from the 27 bitangents that intersect the curve at the affine points $P_1,P_2$, where $s(x)=(x-x(P_1))(x-x(P_2))$ and the linear polynomial $u$ is determined by $u(x(P_i))=y(P_i)$.  If we assume $f(x)=x^4+f_2x^2+f_1x+f_0$ and let $s(x)=x^2+s_1x+s_0$ and $u(x)=u_1x+u_0$, equating coefficients on both sides of the equation $s^2=u^3+f$ yields the system
\[
2s_1=u_1^3,\quad  2s_0 = f_2+3u_0u_1^2-s_1^2,\quad 2s_0s_1 = f_1+3u_0^2u_1,\quad s_0^2 = f_0+u_0^3.
\]
A Groebner basis calculation over $\Q(f_0,f_1,f_2)$ shows that $s_1$ must be a root of
\begin{align*}
\psi_f(x) := x^9 &+ 24f_2x^7 - 168f_1x^6 + (1080f_0 - 78f_2^2)x^5 + 336f_1f_2x^4\\
 &+ (1728f_0f_2 - 636f_1^2 + 80f_2^3)x^3 + (-864f_0f_1 - 168f_1f_2^2)x^2\\
 &+ (-432f_0^2 + 216f_0f_2^2 - 120f_1^2f_2 - 27f_2^4)x - 8f_1^3,
\end{align*}
and $s_0$ can be written as a polynomial in $s_1$ whose coefficients are rational functions of $f_0,f_1,f_2$.  Each of the 9 possibilities for $(s_0,s_1)$ gives rise to 3 possibilities for $(u_0,u_1)$ that differ only by a cube root of unity and comprise a single $\zeta_3$-orbit.  The 27 possible values of $u_1$ are the roots of $\psi_f(x^3/2)$, each of which determines rational values for $u_0,s_0,s_1$.  The discriminant of $\psi_f$ has the form $-2^{24}3^{27}D_f^2$ with $D_f\in \Z[f_0,f_1,f_2]$, a fact we will use in the proof of the lemma below.

This calculation of $\psi_f(x)$ is valid over any field $k$ whose characteristic is not 2 or 3, and we can define $\psi_f$ for any Picard curve $y^3=f(x)$ over $k$ by
putting $f$ in the form $x^4+f_2x^2+f_1x+f_0$ as follows: if $f= \sum f_ix^i\in k[x]$ with $f_4\ne 1$ then replace $f$ with $f_4^3f(x/f_4)$ and if $f_4=1$ and $f_3\ne 0$ replace $f$ with $f(x-f_3/4)$.

\begin{lemma*}
Let $C\colon y^3=f(x)$ be a Picard curve over $k$ with $\mathrm{char}(k) \ne 2,3$ and let $J:=\Jac(C)$.
Then $k(J[2])$ is the splitting field $K$ of $\psi_f(x^3/2)$ over $k$, and when $k=\F_p$ with $p\equiv 2\pmod 3$ this is also the splitting field $K'$ of $\psi_f(x)$.
\end{lemma*}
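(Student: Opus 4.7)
The plan is to prove the two equalities $k(J[2]) = K$ and, when $k = \F_p$ with $p \equiv 2 \pmod 3$, $K = K'$, using the $\F_4$-module structure on $J[2]$ together with Galois-theoretic arguments. The setup is as follows: since $\mathrm{char}(k) \ne 2, 3$, the endomorphism $\zeta_3 \in \End(J_{k(\zeta_3)})$ acts on $J[2]$ satisfying $x^2+x+1 = 0$, which is irreducible over $\F_2$; hence $J[2]$ is an $\F_4$-vector space of dimension $3$, and each nontrivial $\zeta_3$-orbit together with $0$ forms an $\F_4$-line in it.

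For the inclusion $K \subseteq k(J[2])$, the derivation preceding the lemma identifies the 27 roots of $\psi_f(x^3/2)$ with the $u_1$-coordinates of the bitangent two-torsion classes of $J$, each of which visibly lies in $k(J[2])$. For the reverse inclusion $k(J[2]) \subseteq K$, I would first note that each bitangent class is defined over $K$, because its representing ideal $\langle s(x), u(x)+y\rangle$ has coefficients $s_0, s_1, u_0, u_1$ rational in $u_1$ and the $f_i$; it then suffices to show that the 27 bitangent classes $\F_2$-generate $J[2]$. Under the $\F_4$-structure, these 27 classes partition into 9 distinct $\F_4$-lines, one for each root of $\psi_f$. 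Since a $2$-dimensional $\F_4$-subspace of $\F_4^3$ contains only $(4^2-1)/(4-1) = 5$ lines, the 9 bitangent lines cannot all lie in a common $2$-dimensional subspace; they therefore $\F_4$-span $J[2]$, and hence $\F_2$-span it, using that the $\F_2$-span of a $\zeta_3$-invariant set is automatically $\F_4$-invariant.

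For the equality $K = K'$ when $k = \F_p$ with $p \equiv 2 \pmod 3$, the inclusion $K' \subseteq K$ is immediate, and for the reverse I would exploit that $K/K'$ is Galois with cyclic Galois group, while $\Gal(K/K')$ must fix every root of $\psi_f$, equivalently every $\zeta_3$-orbit of bitangent classes. Since $G/Z \cong \AGL(3,2)$ acts faithfully on the 9 orbits, this forces $\Gal(K/K')$ to map to the identity in $G/Z$, so $\Gal(K/K') \subseteq Z \cap \langle \bar\rho_2(\Frob_p)\rangle$. The hypothesis $p \equiv 2 \pmod 3$ places $\bar\rho_2(\Frob_p)$ in $G - H$, and by the group-theoretic fact recorded in the appendix that $G \to G/Z$ preserves the orders of elements of $G - H$, the cyclic subgroup $\langle \bar\rho_2(\Frob_p)\rangle$ meets $Z$ only trivially. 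Hence $\Gal(K/K') = 1$ and $K = K'$.

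The main obstacle is the span argument of Part 1: a priori $27$ vectors in a $6$-dimensional $\F_2$-space need not span, but introducing the $\F_4$-structure reduces the problem to the elementary pigeonhole inequality $9 > 5$ for $\F_4$-lines in $\F_4^3$.
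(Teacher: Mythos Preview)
Your proposal is correct. The argument for the second assertion (over $\F_p$ with $p\equiv 2\pmod 3$) is essentially the paper's, recast in Galois-theoretic language: the paper computes $[K:\F_p]=\mathrm{ord}(M)$ and $[K':\F_p]=\mathrm{ord}(\bar M)$ and invokes the order-preservation of $G\to G/Z$ on $G-H$, while you use the equivalent formulation $\langle M\rangle\cap Z=1$. Both rely on the faithfulness of the degree-$9$ permutation action of $G/Z$ on the bitangent orbits, which the paper records as the transitive group $9T26$.

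For the first assertion the approaches genuinely diverge. The paper argues as follows: the $27$ bitangent classes lie in $J(K)[2]$, forcing $|J(K)[2]|\in\{32,64\}$; then, after observing that $\zeta_3\in K$ (via ratios of roots of $\psi_f(x^3/2)$), the $\zeta_3$-action partitions $J(K)[2]\setminus\{0\}$ into orbits of size $3$, and since $3\nmid 31$ one concludes $|J(K)[2]|=64$. Your route is more structural: the $27$ classes form a $\zeta_3$-stable set, so their $\F_2$-span is an $\F_4$-subspace, and the pigeonhole bound $9>5$ on $\F_4$-lines forces this span to be all of $J[2]$. Your argument has the minor advantage of not needing the separate verification that $\zeta_3\in K$; the paper's counting is arguably slicker once that step is in hand. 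Both implicitly use that the $27$ bitangent classes are distinct nonzero $2$-torsion points, which follows from the uniqueness of the reduced ideal representation and the non-hyperellipticity of $C$.
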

\begin{proof}
The points in $J[2]$ corresponding to bitangents are defined over $K$, so $J(K)[2]$ has order at least~27 and must be 32 or 64, since $J[2]\simeq (\Z/2\Z)^6$.  The three cuberoots of unity in $\bar k$ lie in $K$ (take ratios of appropriate roots of $\psi(x^3/2)$), so $\Z[\zeta_3]\subseteq \End(J_K)$.  The endomorphism $\zeta_3$ acts bijectively on the points of order 2 in $J(K)[2]$, with $\zeta_3$-orbits of size 3.  We cannot have $\#J(K)[2]=32$ because 3 does not divide 31. Thus $\#J(K)[2]=64$ and $K=k(J[2])$.

Now assume $k$ is a finite field $\F_p$ with $p>3$ and $p \equiv 2\pmod 3$. The discriminant $-2^{24}3^{27}D_f^2$ of $\psi_f$ is a square in its splitting field $K'$, thus $\F_p\subsetneq \F_p(\zeta_3)\subseteq K'$.  The action of the Frobenius endomorphism $\pi$ on $J[2]$ is given by a matrix $M$ in the group $G:=\langle A,B,S_2\rangle\subseteq \Sp_6(\F_2)$ defined above that does not lie in the index 2 subgroup $H:=\langle A,B\rangle$.
For any such $M$, its order $n$ is unchanged in the quotient $G/Z_3$ which gives the action of $\pi$ on the $\zeta_3$-orbits of $J[2]$.  This action is determined by (and has the same order as) the action of $\pi$ on the $\zeta_3$-orbits of the points in $J[2]$ arising from bitangents of $C$, which coincides with the action of the $p$-power Frobenius automorphism on the roots of $\psi_f(x)$.  The $p$-power Frobenius automorphisms of $\F_p(J[2])$ and~$K'$ thus have the same order~$n$, so $\F_p(J[2])=K'$.
\end{proof}

\begin{corollary*}
Let $y^3=f(x)$ be a Picard curve over $\F_p$ with $p\ne 2,p\equiv 2\pmod 3$.
Let $\bar L_p\in \F_2[T]$ be the reduction of its $L$-polynomial modulo $2$.  Then
\[
\bar L_p(T) =\begin{cases}
T^6+T^4+T^2+1 & \text{if }\psi_f(x)\text{ has an $\F_p$-rational root,}\\
T^6+1 & \text{if }\psi_f(x)\text{ has no $\F_p$-rational roots.}\\
\end{cases}  
\]
These cases can be distinguished in $O((\log p)^2\log\log p)$ time by computing the degree of $\gcd(x^p-x,\psi_f(x))$.
\end{corollary*}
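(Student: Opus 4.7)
The plan is to reduce the corollary to a statement about the cycle type of the Frobenius permutation on the roots of $\psi_f$, and then to conclude via a short group-theoretic observation together with the standard complexity of polynomial gcd modulo $p$.

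By the Lemma, $\F_p(J[2])$ coincides with the splitting field of $\psi_f$ over $\F_p$, and $\Frob_p$ acts on $J[2]$ via some $M \in G - H$ (because $p \equiv 2 \pmod 3$ is inert in $\Q(\zeta_3)$) and simultaneously on the $9$ roots of $\psi_f$ via the induced image $MZ \in (G/Z) - (H/Z)$. As the appendix records, the projection $G \to G/Z$ preserves the order of elements of $G - H$, so both actions have the same order $n \in \{2, 6, 8\}$, and by the discussion preceding the Lemma $\bar L_p(T) = T^6 + T^4 + T^2 + 1$ when $n \in \{2, 8\}$ and $\bar L_p(T) = T^6 + 1$ when $n = 6$. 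Since $\psi_f$ has an $\F_p$-rational root if and only if the Frobenius permutation of its $9$ roots has a fixed point, the corollary reduces to the following claim: in the $9$-point action of $G/Z$, every element of $(G/Z) - (H/Z)$ of order $2$ or $8$ has at least one fixed point, and every element of order $6$ has none.

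The cases $n = 2$ and $n = 8$ require no group theory: an involution on $9$ points has cycle type $2^{a} + 1^{b}$ with $b = 9 - 2a \geq 1$, and an order-$8$ element on $9$ points has cycle type $8 + 1$, so both yield at least one fixed point. The nontrivial case is $n = 6$, where the fixed-point-free cycle types with LCM equal to $6$ on $9$ points are $6 + 3$ and $3 + 2 + 2 + 2$; one must exclude all cycle types containing a $1$-cycle. I would discharge this by a finite computer algebra check: using the explicit generators $A$, $B$, $S_2$ given in the appendix, enumerate conjugacy class representatives of the coset $G - H$ (or of $(G/Z) - (H/Z)$ in the \texttt{9T26} realization) and verify that every element of order $6$ acts on the $9$ points without fixed points.

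The complexity statement is routine. One computes $x^p \bmod \psi_f(x)$ in $\F_p[x]/(\psi_f)$ by binary exponentiation; each squaring-and-reduction step consists of $O(1)$ arithmetic operations on $(\log p)$-bit integers, each of cost $O(\log p \cdot \log\log p)$ using fast integer multiplication, and there are $O(\log p)$ steps, giving $O((\log p)^2 \log\log p)$ bit operations in total. The subsequent Euclidean $\gcd$ of $x^p - x \bmod \psi_f$ with $\psi_f$ involves polynomials of degree at most $9$ and is negligible. A root of $\psi_f$ in $\F_p$ exists if and only if this $\gcd$ has positive degree. The main obstacle is the group-theoretic verification for elements of order $6$, which (unlike the other two orders) does not follow from generic cycle-type counting on $9$ points and genuinely requires the explicit realization of $G$ inside $\Sp_6(\F_2)$ and its quotient action on the $9$ $\zeta_3$-orbits of bitangents.
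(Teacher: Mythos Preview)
Your proposal is correct and follows essentially the same approach as the paper's proof: both reduce to the cycle structure of Frobenius in the degree-$9$ permutation action of $G/Z\simeq\mathrm{AGL}(3,2)$ (the transitive group \texttt{9T26}), and both ultimately rely on a finite group-theoretic verification, with the paper simply recording the three cycle types $2^31^3$, $8^11^1$, $6^13^1$ directly. Your treatment is slightly more explicit in one respect---you observe that the orders $2$ and $8$ force a fixed point on $9$ letters by pure parity/counting, independent of the specific group---whereas the paper obtains all three cycle types from the same enumeration; conversely, the paper notes (and you do not) that the argument remains valid even when $\psi_f$ is not squarefree.
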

\begin{proof}
Let $J:=\Jac(C)$.  Then $\bar L_p(T)$ is reciprocal to the characteristic polynomial of the Frobenius endomorphism $\pi$ as an element of $\End(J[2])$, corresponding to a matrix $M$ in the group $G:=\langle A,B,S_2\rangle\subseteq \Sp_6(\F_2)$ defined above that does not lie in the index two subgroup $H:=\langle A,B\rangle$.
As noted above, the two possibilities for $\bar L_p(T)$ are those listed in the statement of the corollary; the first occurs when $M$ has order 2 or 8, while the second occurs when $M$ has order~6.  The order of $M$ is equal to the order of $\Gal(\F_p(J[2])/\F_p)=\Gal(K'/\F_p)$, where $K'$ is the splitting field of $\psi_f(x)$ over $\F_p$.  The Galois group of $\psi_f(x)$ over $\F_p$ is a cyclic subgroup of the transitive group \href{https://www.lmfdb.org/GaloisGroup/9T26}{\texttt{9T26}} whose generator $\sigma$ does not lie in the unique index~2 subgroup (we note that this is true even if $\psi_f(X)$ is not squarefree).
The cycle structure of the degree 9 permutation $\sigma$ must be one of $2^31^3$, $8^11^1$, or $6^13^1$, the last of which has order 6 and occurs if and only if $\psi_f(x)$ has no $\F_p$-rational roots.

As is well known, the roots of $\gcd(x^p-x,\psi_f(x))$ are the distinct $\F_p$-rational roots of $\psi_f(x)$; the degree of this polynomial is nonzero if and only if $\psi_f(x)$ has an $\F_p$-rational root. To efficiently compute $\gcd(x^p-x,\psi_f(x))$ one computes $x^p$ in the ring $\F_p[x]/(\psi_f(x))$ via binary exponentiation to obtain a polynomial $g\in \F_p[x]$ of degree less than $\deg \psi_f=9$ and then computes $\gcd(g(x)-x,\psi_f(x))$.  This involves $O(\log p)$ ring operations in $\F_p[x]/(\psi_f(x))$, each of which can be computed using $O(1)$ ring operations and a Euclidean division in $\F_p[x]$, followed by a GCD computation on polynomials of degree $O(1)$, which requires $O(1)$ ring operations and Euclidean divisions in $\F_p[x]$ using the standard Euclidean algorithm.

Each Euclidean division in $\F_p[x]$ can be accomplished using $O(1)$ ring operations in $\F_p[x]$ via Newton iteration \cite[Theorem 9.6]{GG13}, and each ring operation in $\F_p[x]$ involving polynomials of degree $O(1)$ can be achieved using $O(1)$ ring operations in $\Z$ via Kronecker substitution and Euclidean division in $\Z$.
The entire computation of $\gcd(x^p-x,\psi_f(x))$, including the cost of deriving $\psi_f(x)$ from $f(x)$, reduces to $O(\log p)$ ring operations on integers with $n=O(\log p)$ bits.  Applying the $O(n \log n)$ bound for integer multiplication \cite{HvdH21} completes the proof.
\end{proof}


\begin{thebibliography}{McK-Sta}
	
	\bibitem[ABCMT19]{ABCMT19} V. Arul, A. Best, E. Costa, R. Magner, N. Triantafillou \emph{Computing zeta functions of cyclic covers in large characteristic}, Proceedings of the Thirteenth Algorithmic Number Theory Symposium, 37--53, Open Book Ser., 2, Math. Sci. Publ., Berkeley, CA, 2019. 

	\bibitem[Abe18]{Abe18} S. Abelard, \emph{Counting points on hyperelliptic curves in large characteristic: algorithms and complexity}, PhD diss., Universit\'e de Lorraine, France, 2018.
	
	\bibitem[AH19]{AH19} J. D. Achter and E. W. Howe, \emph{Hasse--Witt and Cartier--Manin matrices: A warning and a request}, Arithmetic Geometry: Computations and Applications, Contemporary Mathematics \textbf{722} (2019), 1--18, American Mathematical Society.
	
	\bibitem[AP20]{AP20} S. Asif, D. Pentland, \emph{Computing Picard}, Github repository containing Sage code, available at \href{https://github.com/sualehasif/computingPicard}{https://github.com/sualehasif/computingPicard}.

	\bibitem[Aru21]{Aru20}
	V. Arul, \emph{Division by $1-\zeta$ on Superelliptic curves and Jacobians}, electronically published in Int. Math. Res. Not., Volume 2021, Issue 4, February 2021, Pages 3143--3185.	

	\bibitem[BEO01]{BEO01}
	H.U. Besche, B. Eick, and E. O'Brien, \emph{A millennium project: constructing Small Groups}, Int. J. Alg. Comp. \textbf{12} (2001), 623--644; \url{http://www.icm.tu-bs.de/ag_algebra/software/small/}.
	
		
	\bibitem[BTW05]{BTW05} M. Bauer, E. Teske, and A. Weng, \emph{Point counting on Picard curves in large characteristic}. Mathematics of computation \textbf{74} (252), 1983--2005, 2005.
	
	\bibitem[CDF20]{CDF20}
	E. Costa, R. Donepudi, R. Fernando, V. Karemaker, C. Springer, and M. West, \emph{Restrictions on Weil polynomials of Jacobians of hyperelliptic curves}, to appear in Arithmetic geometry, number theory, and computation, Simons Symposia,  available at \href{https://arxiv.org/pdf/2002.02067.pdf}{arXiv:2002.02067v2}.
	
	\bibitem[CFADLNV05]{CFADLNV05} H. Cohen, G. Frey, R. Avanzi, C. Doche, T. Lange, K. Nguyen, F. Vercauteren, \emph{Handbook of Elliptic and Hyperelliptic Curve Cryptography}. Chapman and Hall/CRC 2005.
	
	\bibitem[FKRS12]{FKRS12} F. Fit\'e, K.S. Kedlaya, V. Rotger, and A.V. Sutherland, Sato-Tate distributions and Galois endomorphism modules in genus 2, Compos. Math. \textbf{148} (2012), 1390--1442.	
	
	\bibitem[FKS21]{FKS19} F. Fit\'e, K. S. Kedlaya, \emph{Sato--Tate groups of abelian threefolds: a preview of the classification},  Contemp. Math. \textbf{770} (2021), 103--129.	
	
	\bibitem[Fit20]{Fit20} F. Fit\'e, \emph{Ordinary primes for some varieties with extra endomorphisms}, available at \href{https://arxiv.org/pdf/2005.10185.pdf}{arXiv: 2005.10185v1}. 
	
	\bibitem[FO04]{FO04} S. Flon and R. Oyono, \emph{Fast arithmetic on Jacobians of Picard curves}, International Workshop on Public Key Cryptography, Springer, 2004, pp. 55--68.
	
	\bibitem[GG13]{GG13} J. von zur Gathen and J. Gerhard, \emph{Modern computer algebra}, third edition, Cambridge University Press, 2013.
	
	\bibitem[Har14]{Har14} D. Harvey, \emph{Counting points on hyperelliptic curves in average polynomial time}, Ann. of Math.(2) \textbf{179} (2014) no. 2, 783–803
	
	\bibitem[Har15]{Har15} D. Harvey, \emph{Computing zeta functions of arithmetic schemes}, Proc. London Math. Soc. (3) 111 (2015) 1379--1401.
	
	\bibitem[HvdH21]{HvdH21} D. Harvey and J. van der Hoeven, \emph{Integer multiplication in time $O(n \log n)$}, Ann. of Math. \textbf{193} (2021), 563--617.
	

	\bibitem[HMS16]{HMS16} D. Harvey, M. Massierer, and A. V. Sutherland, \emph{Computing  L-series  of  geometrically  hyperelliptic  curves  of  genus  three}, in  Algorithmic  Number  Theory  12th  International  Symposium  (ANTS  XII),  LMS  J.  Comput.  Math. 19A (2016),  220--234. 
	
	\bibitem[HS14]{HS14} D. Harvey and A. V. Sutherland, \emph{Computing Hasse--Witt matrices of hyperelliptic curves in average polynomial time}, Algorithmic Number Theory Eleventh International Symposium (ANTS XI),Vol. 17, London Mathematical Society Journal of Computation and Mathematics (2014) 257--273.
	
	\bibitem[HS16]{HS16} D. Harvey and A. V. Sutherland, \emph{Computing Hasse--Witt matrices of hyperelliptic curves in average polynomial  time,  II}, Frobenius  distributions:  Lang–Trotter  and  Sato–Tate  conjectures,  Contemporary Mathematics \textbf{663} (American Mathematical Society, Providence, RI, 2016) 127--148.
	
	\bibitem[Kat73]{Kat73} N. M. Katz, \emph{Une formule de congruence pour la fonction $\zeta$}. In Groupes de monodromie en g\'eom\'etrie alg\'ebrique. II, Lecture Notes in Mathematics, Vol. \textbf{340}, pages 401--438. Springer-Verlag, Berlin–New York, 1973. S\'eminaire de
	G\'eom\'etrie Alg\'ebrique du Bois-Marie 1967--1969 (SGA 7 II), dirig\'e par P. Deligne et N. Katz.
	
	\bibitem[KM74]{KM74} N. M. Katz and W. Messing, \emph{Some consequences of the Riemann hypothesis for varieties over finite fields}, Invent. Math. \textbf{23} (1974), p. 73--77.
	
	\bibitem[KS08]{KS08} K. S. Kedlaya and A. V. Sutherland, \emph{Computing $L$-series of hyperelliptic curves}, Algorithmic
number theory, Lecture Notes in Comput. Sci., vol. 5011, Springer, Berlin, 2008, pp. 312--326.
	
	\bibitem[KS09]{KS09} K. S. Kedlaya and A. V. Sutherland, \emph{Hyperelliptic curves, L-polynomials, and random matrices}, in Arithmetic, geometry, cryptography, and coding theory: international conference, November 5--9, 2007, CIRM, Marseilles, France, Contemporary Mathematics, vol. 487 (American Mathematical Society, Providence, RI, 2009), 119--162.	
	
	\bibitem[LMFDB]{LMFDB} The LMFDB Collaboration, \emph{The {L}-functions and Modular Forms Database}, 2021; \url{http://www.lmfdb.org}.
		
	\bibitem[Mac19]{Mac19} E. MacNeil, \emph{$C_{3,4}$ curves}, Github repository containing Sage code, available at \href{https://github.com/emmacneil/c34-curves}{https://github.com/emmacneil/c34-curves}.		
	
	\bibitem[MJS20]{MJS20} E. MacNeil, M. Jacobson, and R. Scheidler, \emph{Divisor class group arithmetic on $C_{3,4}$ curves}. Master’s thesis, University of Calgary, Canada, 2019. Available at \href{https://prism.ucalgary.ca/handle/1880/111659}{https://prism.ucalgary.ca/handle/1880/111659}.	
	
	\bibitem[PARIGP]{PARIGP} H. Cohen et al., \emph{PARI/GP mathematics software, version 2.11.4}, available at \href{https://pari.math.u-bordeaux.fr}{https://pari.math.u-bordeaux.fr}.

	\bibitem[Pil90]{Pil90} J. Pila, \emph{Frobenius maps of abelian varieties and finding roots of unity in finite fields}, Math. Comp. \textbf{55} (1990), no. 192, 745--763.

	\bibitem[Rib76]{Rib76} K.A. Ribet, \emph{Galois action on division points on abelian varieties with many real multiplications}, Am. J. Math. \textbf{98} (1976), 751--804.
	
	\bibitem[Sage]{Sage}  W. A. Stein et al., \emph{Sage mathematics software. Version 6.3}, available at \href{http://www.sagemath.org}{http://www.sagemath.org}. 
	
	\bibitem[Saw16]{Saw16}  W. Sawin, \emph{Ordinary Primes for Abelian Surfaces, Comptes Rendus Mathematique} \textbf{354}, No. 6 (2016).
	
	\bibitem[Sch98]{Sch98}
	E. F. Schaefer, \emph{Computing a Selmer group of a Jacobian using functions on the curve}, Math. Ann. \textbf{310}, 447--471 (1998). A 2015 corrected version is available at \href{https://arxiv.org/pdf/1507.08325.pdf}{arXiv:1507.08325v1}.
	
	\bibitem[Sch85]{Sch85} R. Schoof, \emph{Elliptic curves over finite fields and the computation of square roots mod $p$}, Math. Comp. \textbf{44} (1985) 483--494.	
	
	\bibitem[Ser58]{Ser58} J.-P. Serre, \emph{Sur la topologie des vari\'et\'es alg\'ebriques en caract\'eristique $p$}. In Symposium internacional de topolog\'ia algebraica, pages 24--53. Universidad Nacional Aut\'onoma de M\'exico and UNESCO, Mexico City, 1958.
	
	\bibitem[Ser12]{Ser12} J.-P. Serre, \emph{Lectures on $N_X(p)$} (CRC Press, Boca Raton, FL, 2012).	
	
	\bibitem[Sut11]{Sut11} A. V. Sutherland, \emph{Structure computation and discrete logarithms in finite abelian $p$-groups}, Mathematics of Computation \textbf{80} (2011), 477--500.
	
	\bibitem[Sut20]{Sut20} A. V. Sutherland, \emph{Counting points on superelliptic curves in average polynomial time}, Fourteenth Algorithmic Number Theory Symposium (ANTS XIV), Open Book Series \textbf{4} (2020), 403--422.
		
	\bibitem[Upt09]{Upt09} M. Upton, \emph{Galois representations attached to Picard curves}, J. Alg. \textbf{322} (2009), 1038--1059.

	\bibitem[Zar18]{Zar18} Y. Zarhin, \emph{Endomorphism algebras of abelian varieties with special reference to superelliptic Jacobians}. Geometry, algebra, number theory, and their information technology applications, 477--528, Springer Proc. Math. Stat., 251, Springer, Cham, 2018.

\end{thebibliography}
\end{document}